\documentclass[leqno,12pt]{amsart} 
\setlength{\textheight}{23cm}
\setlength{\textwidth}{16cm}
\setlength{\oddsidemargin}{0cm}
\setlength{\evensidemargin}{0cm}
\setlength{\topmargin}{0cm}
\setlength{\unitlength}{0.06in} 
\usepackage{amssymb, mathrsfs, amsmath, tabu, amsthm}
\usepackage{hyperref}
\usepackage{tikz-cd, graphicx} 
\usepackage{pdfpages}
\usepackage{enumitem}
%
%
%
\theoremstyle{plain} 
\newtheorem{theorem}{\indent\sc Theorem}[section]   
\newtheorem{lemma}[theorem]{\indent\sc Lemma}

\newtheorem{proposition}[theorem]{\indent\sc Proposition}

\theoremstyle{definition} 

\newtheorem{remark}[theorem]{\indent\sc Remark}

%

%


\usepackage[active]{srcltx}
\usepackage{pdfsync}
\usepackage{tabularx} 
\usepackage{arydshln}

\newcommand{\mcL}{\mathcal{L}}

\newcommand{\FF}{\mathbb{F}}
\newcommand{\ZZ}{\mathbb{Z}}

\def\dim{\mathop{\hbox{\rm dim}}}
\def\tr{\mathop{\rm tr}}
\newcommand{\sg}{\mathrm{sgn}}
  
\newcommand{\spf}{\mathfrak{sp}}

\newcommand{\sof}{\mathfrak{so}}
\newcommand{\slf}{\mathfrak{sl}}

\newcommand{\id}{\mathrm{id}}
\newcommand{\mm}{\mathfrak{m}}
\newcommand{\hh}{\mathfrak{h}}
\newcommand{\g}{\mathfrak{g}}
\newcommand{\e}{\mathfrak{e}}
\def\ll#1{\llcorner\widetilde{#1}}

\title[  Models of $\e_8$]{  Linear models \\of the exceptional Lie algebra $\e_8$}

\author[Y.~Cabrera, C.~Draper and A.~Garv\'\i n]{  
Yolanda Cabrera${}^\star$, Cristina Draper${}^*$  and Antonio Garv\'\i n${}^\star$ 
}  

\subjclass[2010]{Primary  
17B25; 
Secondary 
17B70. 
}
\keywords{Exceptional Lie algebra, gradings, constructions, Linear Algebra, irreducible modules}
\thanks{Supported       by the Spanish Ministerio de Ciencia e Innovaci\'on   through projects  PID2019-104236GB-I00/AEI/10.13039/501100011033 (first and second authors)  and PID2020-118452GB-I00 (second and third authors),   with FEDER funds; and by
Junta de Andaluc\'{\i}a  through projects  FQM-336, UMA18-FEDERJA-119 (first and second authors) and FQM-213 and PROYEXCEL-00827 (third author). 
}

\address{%
${}^*$  Departamento de \'Algebra, Geometr\'\i a y Topolog\'\i a,   \endgraf
${}^\star$ Departamento de Matem\'atica Aplicada, \endgraf
Universidad de M\'{a}laga, 
 29071 M\'{a}laga,  
Spain
}
\email{yolandacc@uma.es, cdf@uma.es, garvin@uma.es.}
\begin{document}


\maketitle

\begin{abstract}
This work provides five   explicit constructions of the exceptional Lie algebra $\e_8$, 
based on its semisimple subalgebras of maximal rank. 
Each of these models is graded 
by an abelian group, 
namely, 
$\mathbb{Z}_4$, $\mathbb{Z}_5$,  $\mathbb{Z}_6$, $\mathbb{Z}_3^2$ and $\mathbb{Z}_2\times\mathbb{Z}_4$;
the neutral component is direct sum of special linear algebras and the remaining homogeneous components are irreducible modules 
for it.
\end{abstract}


\section{Introduction }

The second author gave in \cite{ModelosF4}   a description of the exceptional Lie algebra $\mathfrak{f}_4$ for each of its semisimple subalgebras of maximal rank, joint with a constructive procedure which can be applied to any simple Lie algebra and any reductive subalgebra of maximal rank, with the help of its
 irreducible modules. The main idea in it was that there always exists a grading by a (finitely generated) abelian group of the algebra such that the neutral homogeneous component is the given reductive subalgebra and the remaining homogeneous components are irreducible modules for it. Furthermore, the products between two components are essentially known up to some scalars, which can be determined 
 by imposing the constructed graded algebra to satisfy the Jacobi identity. The idea was not new, but it generalized a method of  getting models of simple Lie algebras based on gradings on cyclic groups, which was quite well known. Of course this method has been  used mostly for constructing exceptional Lie algebras, whose relevance (and amount of appearances) is  beyond question but they are usually difficult to handle. 
 On one hand,  their root decompositions play a key role in the theory but 
  sometimes they are not very practical when you really want to use them:
 the product of two (not opposite) root spaces $[\mcL_{\alpha},\mcL_{\beta}]$ fills the one-dimensional space $\mcL_{\alpha+\beta}$ but, at the end, 
 one has to know which is the scalar that gives such bracket, and the process to have explicitly all the scalars is very long and far from trivial.  
  On the other hand, there is a wonderful    unified construction of all the exceptional Lie algebras (different from $G_2$)  due to Tits \cite{Tits}, which does not use gradings but two ingredients;   the
exceptional simple Jordan algebra, the Albert algebra, jointly with a composition algebra.  
But Jordan algebras are not particularly easy,  and quite often one is interested in a particular symmetry of the object under study, that is, in a 
 concrete grading on the algebra. This is part of the reason why the models of the exceptional Lie algebras based on gradings 
 and on multilinear algebra have a long tradition, their high symmetry, 
 in addition to the fact  that they provide  an easier approach to these algebras.

 A classical reference for constructions of exceptional Lie algebras is  the encyclopedia \cite{EMSIII}: \cite[Chapter~5, \S1]{EMSIII} is devoted to models coming from the octonion algebra   (for instance, those coming from the Albert algebra), but \cite[Chapter~5, \S2]{EMSIII} develops models associated with gradings, although, as mentioned, it only deals with gradings over cyclic groups and not necessarily finite.  
 Another reference which has been very inspiring through this paper is \cite{Adams}, a posthumous  book about Adams' lecture notes taken on his lectures on exceptional Lie groups given at Cambridge University. All the complex exceptional Lie algebras are constructed  in it, as well as some of their real forms. Finally, a third source, probably the most accesible, is the section of algebraic constructions of the exceptional Lie algebra in  \cite[\S22.4]{FultonHarris}. The \emph{linear models} of the Lie algebra $\e_8$ developed in the above references are:
 \begin{itemize}
\item The $\ZZ_2$-grading on $\e_8$ with even part isomorphic to the orthogonal algebra $\sof(16)$ in \cite[Theorem~6.3]{Adams}. Here the odd homogeneous component is the half-spin representation for the action of $\mcL_{\bar0}$. 
\item The $\ZZ_3$-grading on $\e_8$ with neutral homogeneous component isomorphic to the special linear algebra $\slf(9)$. This   construction goes to Freudenthal \cite{Freudenthal_laZ3} in the fifties, but, due to its relevance, it can also be found in   
\cite[p.~181]{EMSIII} or in \cite[p.~360-361]{FultonHarris}. 
Thus, starting with $V$ a 9-dimensional vector space, $\e_8$ is taken to be the sum of the space of traceless endomorphisms  of $V$, the space of trivectors of $V$ and the space of cotrivectors.
In Chevalley words,  \lq\lq the bracket operation is given in a
very condensed form which exhibits a large degree of symmetry\rq\rq.
\end{itemize}  
In fact,  \cite[p.~180-181]{EMSIII} also gives: 
a $\ZZ_2$-grading on $E_7$ with fixed part of type $\mathfrak{sl}(8)$, 
a $\ZZ_2$-grading on $E_6$ with fixed part of type $\mathfrak{sp}(8)$, some $\ZZ$-gradings on $E_6$, $E_7$ and $E_8$ with fixed parts the general linear algebras
$\mathfrak{gl}(6)$, $\mathfrak{gl}(7)$ and $\mathfrak{gl}(8)$ respectively. The scalars are shown without proofs, computed by Katanova,
 appearing scalars as 40320 or 1080.   
(See Remark~\ref{re_Z3} to contrast how  suitable descriptions of the invariant products can make the scalars to be $\pm1$.) 
 
 Thus, the procedure is in some way folklore: the $\ZZ_m$-gradings  are produced by order $m$ automorphisms of the (simple) Lie algebra classified in  the book \cite[Chapter~8]{Kac}, which also describes some key properties of the homogeneous components of the grading as modules for the neutral component (see Section~\ref{se_gradings}). 
 The situation in \cite{ModelosF4} is, as mentioned, slightly more general: it can be applied for reductive subalgebras of maximal rank and the groups which appear are not necessarily cyclic. The main result there is inspired from \cite{Bourbaki}:

   \begin{theorem}\label{teo_main} (\cite{ModelosF4})   
Let $\mcL$ be a (finite-dimensional) simple Lie algebra over an algebraically closed  field $\FF$ of zero characteristic, $\Phi$ a root system of $\mcL$
relative to a Cartan subalgebra $H$ of $\mcL$,
and $\Phi'$ a subset of $ \Phi$ verifying $-\Phi'=\Phi'$ and $(\Phi'+\Phi')\cap\Phi=\Phi'$.
Let $G$ be the abelian group $\ZZ \Phi/\ZZ\Phi'$. Then:
\begin{description}
\item[a)] $\Phi\cap\ZZ\Phi'=\Phi'$.
\item[b)] $\mcL=\oplus_{g\in G} \mcL_{g}$ is $G$-graduated, being
$\mcL_{e}=\mathfrak h=H\oplus\sum_{\alpha\in\Phi'}  L_{\alpha}$ a reductive subalgebra
and, for any $e\ne g\in G$, either $\mcL_{g}=0$ or $\mcL_{g}$ is an $\mathfrak h$-irreducible module.
\item[c)] $G$ is the smallest group verifying the property b).
\item[d)] If $g_1,g_2\in G\setminus\{0\},\,g_1+g_2\ne0$, $\mathrm{Hom}_{\mathfrak h}(\mcL_{g_1}\otimes\mcL_{g_2},\mcL_{g_1+g_2})=\FF[\ ,\ ]\mid_{\mcL_{g_1}\otimes\mcL_{g_2}}$.
\item[e)] If $g_1,g_2\in G,\,g_1+g_2\ne0$, $[\mcL_{g_1},\mcL_{g_2} ]=\mcL_{g_1+g_2}$.
\item[f)] $\mathfrak h$ is semisimple if and only if $G$ is a finite group.
\item[g)] $\mcL_{g}\ne0$ for all $g\in G$ if and only if the bracket of any two irreducible components of $\mcL$ not contained in $\mathfrak h$ is not zero. In this case $\mathfrak h$ is semisimple.
\end{description}
\end{theorem}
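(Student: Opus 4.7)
My plan is to prove items (a)--(g) in order, building everything on the root space decomposition $\mcL=H\oplus\bigoplus_{\alpha\in\Phi}L_\alpha$ and the combinatorics of $\Phi'$.

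For (a), given $\alpha\in\Phi\cap\ZZ\Phi'$ I write $\alpha=\sum n_i\beta_i$ with $\beta_i\in\Phi'$, and, after replacing $\beta_i$ by $-\beta_i$ where needed (using $-\Phi'=\Phi'$), I may assume $n_i>0$. Induction on $\sum n_i$ combined with the hypothesis $(\Phi'+\Phi')\cap\Phi=\Phi'$ yields $\alpha\in\Phi'$. Part (b) then follows by declaring $\mcL_g=\sum_{[\alpha]=g}L_\alpha$ for $g\neq 0$ and $\mcL_0=H\oplus\sum_{\alpha\in\Phi'}L_\alpha$; this is a $G$-grading because the class map $\ZZ\Phi\to G$ is additive on root sums, and $\mcL_0$ is reductive as the subalgebra associated to the symmetric closed subsystem $\Phi'$. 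Irreducibility of each $\mcL_g$ ($g\neq 0$) as an $\mathfrak h$-module reduces, since $H\subseteq\mathfrak h$ forces any submodule to be a sum of root spaces, to showing that $\Phi_g=\{\alpha\in\Phi:[\alpha]=g\}$ is connected under $\alpha\mapsto\alpha+\beta$ with $\beta\in\Phi'$; this is a standard root-string argument inside the irreducible root system $\Phi$.

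For (c), any $G'$-grading of $\mcL$ with neutral component $\mathfrak h$ places each one-dimensional root space in a single component, yielding a group homomorphism $\ZZ\Phi\to G'$ which vanishes on $\ZZ\Phi'$ and hence factors through $G$. I would then treat (e) before (d): $[\mcL_{g_1},\mcL_{g_2}]$ is an $\mathfrak h$-submodule of the irreducible $\mcL_{g_1+g_2}$, hence zero or everything, and the zero case is ruled out using the simplicity of $\mcL$ (a totally vanishing bracket between two non-neutral components would allow the assembly of a proper graded ideal). For (d), Schur's lemma bounds $\hom_{\mathfrak h}(\mcL_{g_1}\otimes\mcL_{g_2},\mcL_{g_1+g_2})$ by the multiplicity of $\mcL_{g_1+g_2}$ in the tensor product; I would prove this multiplicity is exactly one by showing that a highest root $\gamma\in\Phi_{g_1+g_2}$ admits an essentially unique decomposition $\gamma=\alpha+\beta$ with $\alpha\in\Phi_{g_1}$, $\beta\in\Phi_{g_2}$ once vanishing under the positive nilpotent part of $\mathfrak h$ is imposed. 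The bracket then spans this one-dimensional Hom space by (e).

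For (f), the center of $\mathfrak h$ is $\{h\in H:\alpha(h)=0 \text{ for all }\alpha\in\Phi'\}$, which vanishes precisely when $\Phi'$ spans $H^*$, equivalently when $\ZZ\Phi/\ZZ\Phi'$ is finite. For (g), I consider $S=\{g\in G:\mcL_g\neq 0\}\cup\{0\}$: this set is symmetric since roots appear in $\pm$ pairs, and under the hypothesis that all brackets between nonzero components are nonzero it is closed under addition by (e), hence a subgroup of $G$; by the minimality of $G$ from (c) it must equal $G$. The converse uses (e) together with $[L_\alpha,L_{-\alpha}]\neq 0$ for any root $\alpha$, which handles the case $g_1+g_2=0$. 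I expect the main obstacle to be the multiplicity-one claim in (d): a naive Schur argument only bounds the Hom space by the multiplicity, which for a general tensor product of irreducible modules can exceed one, so a careful weight-space analysis tied to the coset decomposition $\Phi\to G$ will be necessary.
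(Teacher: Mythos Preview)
The paper does not prove Theorem~\ref{teo_main}: it is quoted verbatim from \cite{ModelosF4} (note the citation in the theorem header) and used as a black box throughout. Consequently there is no ``paper's own proof'' of this statement to compare your proposal against; the present paper only \emph{applies} the theorem to obtain the five models of $\e_8$.

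That said, a few comments on your sketch. Your approach to (a), (c), (f) is standard and essentially complete. In (b), the reduction of irreducibility to $\Phi'$-connectedness of each coset $\Phi_g$ is correct, but ``a standard root-string argument'' undersells the work: you need that for any $\alpha,\gamma\in\Phi_g$ there is a chain $\alpha=\alpha_0,\alpha_1,\dots,\alpha_k=\gamma$ in $\Phi_g$ with $\alpha_{i+1}-\alpha_i\in\Phi'$, and this is not an immediate root-string fact. In (e), your parenthetical is too quick: the vanishing of a single bracket $[\mcL_{g_1},\mcL_{g_2}]$ does not by itself produce a proper ideal of $\mcL$; a cleaner route is to exhibit, whenever $\Phi_{g_1},\Phi_{g_2},\Phi_{g_1+g_2}$ are nonempty, explicit roots $\alpha\in\Phi_{g_1}$, $\beta\in\Phi_{g_2}$ with $\alpha+\beta\in\Phi$ (which forces $[L_\alpha,L_\beta]\neq 0$). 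You correctly flag (d) as the delicate point: the multiplicity-one statement genuinely requires a weight argument specific to this coset situation and does not follow from Schur alone. If you want the actual proofs, consult \cite{ModelosF4} directly.
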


We are interested in  getting some new constructions of the largest of the exceptional Lie algebras, $\e_8$, by applying just the above result.
A nice survey of classical and recent results on the exceptional
objects of type $E8$ including the Lie algebra is \cite{todoE8}. 
Section~4  in such work just recalls some   constructions of $\e_8$   via gradings, what Garibaldi calls \emph{a $\mcL_{\bar0}$-construction of $\mcL$}.
He also speaks  about  the most important examples (the $\ZZ_2$ and the $\ZZ_3$-gradings above), and  mentions   the $\ZZ_5$ and $\ZZ_2^2$-gradings too, although without showing complete constructions involving the scalars. What he encloses is a huge amount of references (170!) and many interesting connections and applications.

The list of semisimple subalgebras of $\mcL=\e_8$ to which Theorem~\ref{teo_main} can be applied    is   the following list, where we write $(G,\mathfrak h$) for $G$ the corresponding grading group and $\mathfrak h$ the semisimple subalgebra of $\mcL$:
$$
\begin{array}{c}
(\ZZ_3,A_8),\quad (\ZZ_4,A_7\oplus A_1),\quad (\ZZ_6,A_5\oplus A_2\oplus A_1),\quad  (\ZZ_5,2A_4),\vspace{3pt}\\
 (\ZZ_2,D_8),\quad (\ZZ_4,D_5\oplus A_3),\quad (\ZZ_3,E_6\oplus A_2),\quad (\ZZ_2,E_7\oplus A_1),
\end{array}
$$
if the group is cyclic, and 
$$
(\ZZ_2^2,D_6\oplus 2A_1),\  (\ZZ_2^2,2D_4),\  (\ZZ_4\times\ZZ_2,2A_3\oplus 2A_1), 
(\ZZ_3^2,4A_2),\ (\ZZ_2^3,D_4\oplus 4A_1),\ (\ZZ_2^4,8A_1),
$$
otherwise. 
For any of these pairs, $\e_8$ can be constructed as a $G$-graded algebra with $\mcL_e=\mathfrak h$, and surprisingly the   amount of pieces  breaks neither the symmetry nor the elegance of the constructions.
All of these constructions were presented in a very concise form in a conference in Spain in 2005, \cite{pos}. Now we think that it is convenient to present and complete these results, which could be  potentially useful to the mathematical community. We will focus on the cases in which all the  simple ideals of the neutral component  are of type $A$, because the actions on their irreducible modules are considerably easier to describe. 
This is the situation for 7 semisimple subalgebras of $\e_8$ of rank 8, according to the above list. 

Among them, Alberto Elduque provides the model of $\e_8$ related to the pair $(\ZZ_2^4,8A_1)$ in his paper about magic squares and symmetric composition algebras \cite{Alb_ylosA1}. In it, he applies the $\ZZ_2^2$ and $\ZZ_2^3$-grading on the para-Hurwitz algebras to construct all the
exceptional Lie algebras   by using only copies of  the symplectic algebra $\mathfrak{sp}(V)$ and   tensor products of copies of $V$, for $V$ a two dimensional vector space with an alternating form. In the case of $\e_8$, that construction  has neutral homogeneous component   $\mcL_e\cong 8 \,\mathfrak{sp}(V)$ and all the 14 remaining nonzero homogenous components   are $\mcL_e$-modules isomorphic to $V^{\otimes 4}$. The scalars which give the multiplication of any pair homogeneous components are computed in \cite[Table~5]{Alb_ylosA1}. Thus, we will concentrate our efforts on giving a complete description of the     models based on the remaining 5 subalgebras (apart from   $A_8$): those ones  of types $A_7\oplus A_1$  (\S\ref{se_A1A7}), $A_5\oplus A_2\oplus A_1$  (\S\ref{seZ6}), $2A_4$ (\S\ref{se_2A4}), $4A_2$  (\S\ref{se_4A2}) and $2A_3\oplus 2A_1$  (\S\ref{se_2A12A3}), each one with its own beauty. 
Although the procedure may not be novel, the results are. The main difficulty to be able to calculate all scalars in an explicit way has been to fix an appropriate notation in Section~\ref{se_invariantproducts}. 
 In fact, it is interesting to contrast how the scalars involved in our models are easier than those already mentioned in the literature, despite the fact that the groups are evidently more complicated. 

To emphasize the importance of having a determined model, and to have a panoramic view of the best known models, let us recall some of the appearances   in the literature of the remaining cases in the list.
 The (complex) model related to the pair $(\ZZ_2,E_7\oplus A_1)$ has been used in \cite{STS} to describe the real symplectic triple systems whose standard enveloping algebras have  type $\e_{8,8}$ or $\e_{8,-24}$. Examples of irreducible Lie-Yamaguti algebras are given in \cite{Fabi1} in connection to this model.  
 The beautiful construction of $\e_8$ based on $(\ZZ_2^2,2D_4)$ appears in \cite[Eq.~(25)]{Baez}. The scalars are not computed there because \cite{Baez} is a nice story connecting ideas and fields, but not including proofs, at most, hints of them. A complete construction, including the brackets, appears in \cite[\S3]{magic1}, where $\e_8$ is constructed from two copies of the para-octonion algebras, and note that the triality Lie algebra of the para-octonion algebra is just of type $D_4$. 
 Also, these constructions in \cite{magic1} provide other realizations in our list. To be precise, the construction of $\e_8$ based on the pair $(\ZZ_3,E_6\oplus A_2)$: Elduque  builds $\e_8$ from a para-octonion algebra and an Okubo algebra too (in fact, it can be constructed from any two symmetric composition algebras of dimension 8), and then the natural $\ZZ_3$-grading on the Okubo algebra is trivially extended to the whole $\e_8$. (Similarly, when he uses twice the Okubo algebra to construct $\e_8$, the extension to $\e_8$ of the $\ZZ_3$-grading on the Okubo algebra gives again the pair  $(\ZZ_3,  A_8)$.) 
 The whole work \cite{magic1} is a reinterpretation of the Freudenthal magic square by using two symmetric composition algebras for a unified construction of all the exceptional Lie algebras, instead of using the best known composition algebras. This other approach is taken by \cite{BartonSudbery}, where Barton and Sudbery provide  descriptions of the exceptional Lie superalgebras, by   using the   triality principle too. (A similar construction is given by Landsberg and Manivel in \cite{LM1}.)

Finally, let us mention one of the applications of our approach to construct an algebra via its gradings. The graded contractions are introduced in \cite{Contracciones} as an alternative to   In\"on\"u-Wigner contractions  in Physics. Instead of a limit of sequences of mutually isomorphic Lie algebras (\emph{continuos contractions}), the commutation relations among graded subspaces are multiplied by contraction parameters. This provides a system of quadratic equations resulting from the Jacobi identities. Of course, it is not difficult to read our results from this viewpoint. An example of work in this line is \cite{contracgraduadasdePauli}, which studies the algebras non isomorphic to $\slf(3,\mathbb C)$ which appear when deforming its fine $\ZZ_3^2$-grading given by the Gell-Mann matrices. This can be applied to find nilpotent and solvable Lie algebras with properties, as in \cite{contractionsg2}, dealing with graded contractions of the $\ZZ_2^3$-graded exceptional Lie algebra $\mathfrak{g}_2$.
Physical motivated cases appear in \cite{motivfisica}. \smallskip

 Through the paper, the considered ground field $\FF$ will be algebraically closed field of zero characteristic, although these models can be extended to   Lie algebras over other fields.


\section{Models based on subalgebras of type $A$}

\subsection{Invariant products}\label{se_invariantproducts}
The main reference used here for the results in multilinear algebra is  \cite[Appendix~B]{FultonHarris} (see also \cite[Chapter~3]{BourbakiAlg}).  
This material  is well known, but fixing the notation is crucial for our purpose of giving some unified models of $\e_8$.

If $V$ is a   vector space over $\FF$ of dimension $n$, and $\mcL=\slf(V)$ denotes the simple Lie algebra of traceless matrices, of rank $n-1$, recall that $\bigwedge^rV$ is an $\mcL$-irreducible module for any $r=0,\dots, n$,
where the action of $f\in\slf(V)$ is given by 
$
f\cdot v_1\wedge\dots\wedge v_r=\sum_{i=1}^r v_1\wedge\dots\wedge f(v_i)\wedge\dots \wedge v_r
$.  
More precisely, $\bigwedge^rV$ is the $\mcL$-module of fundamental weight $\varpi_r$  if $1\le r<n$ \cite{Humphreysalg}.   
(If $r> n$, what happens is $\bigwedge^rV=0$. And, if either $r=0$ or $r=n$, then $\bigwedge^0V\cong\FF$ and  $\bigwedge^rV\cong\FF$ are one-dimensional trivial modules.)
Also, the dual module $V^*$ has fundamental weight $\varpi_{n-1}$ and, more generally, $\bigwedge^rV^*$ has fundamental weight $\varpi_{n-r}$ if $1\le r<n$.
The adjoint module has as highest weight $\varpi_{1}+\varpi_{n-1} $. In fact, $V(\varpi_{1})\otimes V( \varpi_{n-1})\cong V(\varpi_{1}+\varpi_{n-1})\oplus V(0)$ because
  $\Psi\colon V^*\otimes V\to \mathfrak{gl}(V)$  given as
 $\Psi(\alpha\otimes v)(w)=\alpha(w)v$ for any $v,w\in V$ and any $\alpha\in V^*$,  is
 an $\mcL$-module isomorphism.

 Take some concrete $\mcL$-invariant maps  among these modules. 
 First, the wedge product
\begin{equation} \label{eq_wed}
\textstyle
\bigwedge^{r}V\times \bigwedge^{s}V\to \bigwedge^{r+s}V,\qquad
(v_1\wedge\dots\wedge v_r,v_{r+1}\wedge\dots\wedge v_{r+s})\mapsto v_{1}\wedge\dots\wedge v_{r+s}
\end{equation} 
is an   $\mcL$-invariant map (equal to zero if $r+s>n$).
Second, for any $r=0,\dots,n$, we have the non-degenerate bilinear form
\begin{equation*} 
\textstyle
\langle\ ,\ \rangle_r\colon \bigwedge^rV\times\bigwedge^{r}V^*\to \FF,
\end{equation*}
  given by
$$
\langle v_1\wedge\dots\wedge v_r,\varphi_1\wedge\dots\wedge
\varphi_r\rangle_r:=\det(\varphi_i(v_j))=\sum_{\sigma\in S_r}\textrm{sgn}(\sigma)\varphi_{\sigma(1)}(v_1)\dots \varphi_{\sigma(r)}(v_r),
$$
again an $\mcL$-invariant map. 
Here, $S_r$ denotes the usual symmetric group or permutation group.
Also, 
if $s,t\in\FF$, we are taking $\langle s,t \rangle_0=st$. We  simply use the notation  $\langle\ ,\ \rangle$ in case  there is no ambiguity. 
This pairing provides the identification 
\begin{equation}\label{eq_pairing}
\textstyle
\bigwedge^{r}V^*\to(\bigwedge^rV)^*,\qquad 
\alpha\in  \bigwedge^{r}V^* \mapsto \langle -,\alpha\rangle  \colon\bigwedge^{r}V\to\FF.
\end{equation}
Now
fix any  non-zero linear map 
\begin{equation}\label{eq_fi}
\textstyle
\phi\colon\bigwedge^nV\stackrel{\cong}\longrightarrow \FF,
\end{equation}
which clearly is an isomorphism between trivial $\mcL$-modules. 
It provides another $\mcL$-module isomorphism,
\begin{equation}\label{eq_isocondual}
\begin{array}{lcl}
\bigwedge^{n-r}V&\to&(\bigwedge^{r}V)^*\\
x&\mapsto&[y\in\bigwedge^{r}V\mapsto\phi(x\wedge y)\in
\FF].\end{array}
\end{equation}
Here the operation $\wedge$ refers to the map in Eq.~\eqref{eq_wed}. Be careful with this symbol, which can lead to confusion, since
$\wedge$ is not necessarily skew-symmetric, but
$x\wedge y=(-1)^{r(n-r)}y\wedge x $.
Now compose the map in  Eq.~\eqref{eq_isocondual} with the inverse of the map in Eq.~\eqref{eq_pairing}
to get the $\mcL$-module isomorphism
\begin{equation*}\label{eq_defdetilde}
\begin{array}{lcl}
\bigwedge^{n-r}V&\equiv&\bigwedge^{r}V^*\\
x&\mapsto&\tilde x\end{array}
\end{equation*}
given by
$\langle y,\tilde x\rangle=\phi(x\wedge y) $
for any $x\in \bigwedge^{n-r}V$ and $y\in \bigwedge^{r}V$.
\smallskip  

 There are related contraction maps, also called \emph{internal products}, denoted by $\lrcorner$ and $\llcorner$. 
 For $p$ and $q$ positive integers with $p+q\le n$, the contraction
$$
\begin{array}{rclcl}
\bigwedge^pV&\otimes&\bigwedge^{p+q}V^*&\to&\bigwedge^qV^*\\
x&\otimes&\alpha&\mapsto &x \,\lrcorner\,\alpha\end{array}$$
is determined by
$
\langle z,x \,\lrcorner\,\alpha \rangle=\langle z\wedge x,\alpha \rangle,
 $
for any $z\in\bigwedge^qV$. Similarly, the contraction
$$
\begin{array}{rclcl}
\bigwedge^{p+q}V&\otimes&\bigwedge^{p}V^*&\to&\bigwedge^qV\\
x&\otimes&\alpha&\mapsto &x \,\llcorner\,\alpha\end{array}
$$
is determined by
$$
\langle x \,\llcorner\,\alpha,\beta\rangle=\langle x,\alpha\wedge \beta\rangle,
$$
for any 
$\beta\in\bigwedge^qV^*$.  
We will make an extensive use of $\llcorner$, so that 
we should have some way in mind to handle it. 
For $v_i\in V$, $\varphi_i\in V^*$,  $i\le p+q$, we have
$$ 
(v_1\wedge\dots\wedge
v_{p+q})\,\llcorner\,(\varphi_1\wedge\dots\wedge\varphi_{p})=
\sum_{\sigma\in \hat S_{p+q}}\sg(\sigma)\varphi_1(v_{\sigma(1)})
\dots\varphi_p(v_{\sigma(p)})\,v_{\sigma(p+1)}\wedge\dots \wedge
v_{\sigma(q+p)}
$$
where $\hat S_{p+q}$ denotes the set of permutations of $\{1,\dots,p+q\}$ that preserve the order of $\{p+1,\dots,p+q\}$ (the so called \emph{shuffles}). 
  Observe that if $q=0$, then $x \,\llcorner\, \alpha=x \,\lrcorner\, \alpha=\langle x,\alpha \rangle $, for any  
$x\in \bigwedge^{p}V$ and $\alpha\in\bigwedge^{p}V^*$.

   With these tools, we introduce a product on the exterior algebra $\bigwedge V=\oplus_{i=0}^n \bigwedge^i V$
 which does not always coincide with the exterior product or wedge product.  
 We will use it throughout the manuscript to describe unifiedly different  models of $\e_8$.
Thus, define the $\mcL$-invariant bilinear  map
$$
 \begin{array}{cccl}
*\colon&\bigwedge V\times\bigwedge V&\to&\bigwedge V\\
&(x,y)&\mapsto &x*y,\end{array}
$$
where, if  $x\in\bigwedge^i V$ and $y\in\bigwedge^j V$,
\begin{equation}\label{eq_starproduct}
x*y:=\left\{\begin{array}{ll}
x\wedge y \quad&\text{ if }i+j< n,\\
x\,\llcorner\,\tilde y \quad&\text{ if }i+j\ge
n.\end{array}\right.  
\end{equation}

\noindent 
Note that $x*y\in\bigwedge^{[i+j]_n}V$, where we denote by  $[p]_n $  the element in
$\{0,1,\dots,n-1\}$ congruent with $p\in\mathbb N$ modulo $n$.

This operation $*$ gives  a remarkable $\mcL$-invariant map in case $i+j=n$, since 
  for $x\in \bigwedge^{i}V$, $y\in \bigwedge^{n-i}V$, we have
  $x*y=x \, \llcorner \, \widetilde{y} = \langle x, \widetilde{y} \rangle = \phi(y \wedge x)\in\FF$. 
   We denote the related bilinear map given by this restriction of $*$ by   
\begin{equation}\label{eq_laforma} 
\textstyle   
(\ ,\ )\colon\bigwedge^i V\times\bigwedge^{n-i} V\to \FF,\qquad (x,y):=x*y=\phi(y\wedge x).
\end{equation}
Note that $(x,y)=-(y,x)$ if both $i$ and $n-i$ are odd, while $(x,y)=(y,x)$ otherwise.
Next, in order to consider a second   $\mcL$-invariant map   
$
[\ ,\ ]\colon\bigwedge^i V\times\bigwedge^{n-i} V\to
\slf(V)
$, we need to
 recall some facts about the dualization of an action, essentially extracted from  
\cite{Adams}.

\begin{remark}\label{re_dualizar}
If $\mcL$ is a Lie algebra and $U$ and $W$ are   $\mcL$-modules,
denote by $U^\mcL:=\{u\in U: x\cdot u=0\ \forall x\in\mcL\}$, the trivial submodule of $U$. It is easy to check that 
$\mathrm{Hom}_\mcL(U,W)=\mathrm{Hom}(U,W)^\mcL$.
Now consider the usual $\mcL$-module isomorphism $  U^*\otimes W\to \mathrm{Hom}(U,W)$ given by  $ \alpha\otimes w\mapsto (u\mapsto \alpha(u)w)$ for any $u\in U$, $w\in W$ and $\alpha\in U^*$, whose restriction to the trivial submodule allows to identify $  (U^*\otimes W)^\mcL$ with $\mathrm{Hom}_\mcL(U,W)$. Take also in mind the    identifications
\begin{equation}\label{eq_dua}
\mathrm{Hom}(\mcL\otimes W,W)\cong (\mcL\otimes W)^*\otimes W\cong (W\otimes W^*)^*\otimes\mcL^{*}\cong 
\mathrm{Hom}(W\otimes W^*,\mcL^*).
\end{equation}
  Now the action of $\mcL$ on $W$ provides a map in $\mathrm{Hom}_\mcL(\mcL\otimes W,W)$, which in turn gives a map 
  in $\mathrm{Hom}_\mcL(W\otimes W^*,\mcL^*)$ by Eq.~\eqref{eq_dua} and the above considerations.
 The $\mcL$-invariant map  $W\times W^*\to\mcL^*$ obtained in this way is usually referred to as the  map \emph{dualizing the action} of $\mcL$ on $W$. If besides $\mcL$ is a simple Lie algebra, its Killing form provides a convenient isomorphism between the adjoint module $\mcL$ and its dual, and then we get an $\mcL$-invariant map $W\times W^*\to\mcL$. In the case $\mcL= \slf(V)$, we can replace the Killing form with the bilinear form on $\slf(V)$ given by the trace, which also provides a way of identifying $\mcL$ with $\mcL^*$ by means of $f\mapsto \tr(f\circ-)$.
\end{remark}

Particularizing  Remark~\ref{eq_dua} for $\mcL= \slf(V)$ and $W= \bigwedge^i V$, we get  the mentioned $\mcL$-invariant map   
$$
\textstyle
[\ ,\ ]\colon\bigwedge^i V\times\bigwedge^{n-i} V\to
\slf(V)
$$
 determined by  the condition 
\begin{equation}\label{eq_dualizando}
  \tr(f\circ[x,y])=(f\cdot x,y)
\end{equation}
for all $ f\in\slf(V)$, $x\in\bigwedge^{i} V$, $y\in\bigwedge^{n-i} V$.  
Again note that $[x,y]=[y,x]$ if both $i$ and $n-i$ are odd, while $[x,y]=-[y,x]$ otherwise.
(With a slight abuse of notation, $[\ ,\ ]$  is \emph{skew-symmetric} if and only if $(\ ,\ )$ is \emph{symmetric}.)
For later use, observe that if $i=0$, then $[x,y]=0$ for any $x\in \bigwedge^0 V=\FF$ and for any $y\in  \bigwedge^n V\cong \FF$, since $f\cdot x=0$ for all $ f\in\slf(V)$.

Recall from representation theory (see, for instance, \cite{Humphreysalg})
that   $\dim\mathrm{Hom}_{\slf(V)}( \bigwedge^{r}V\otimes\bigwedge^{n-r}V,\slf(V)) =1$ if $0<r<n$,
to conclude that any other 
$\mcL$-invariant map
$\bigwedge^r V\times\bigwedge^{n-r} V\to
\slf(V)
$ is necessarily a scalar multiple of  $[\ ,\ ]$.
Similarly, $\dim\mathrm{Hom}_{\slf(V)}( \bigwedge^{r}V\otimes\bigwedge^{s}V,\bigwedge^{r+s}V) =1$ if $0\le r+s\le n$, so that any   
$\mcL$-invariant map $ \bigwedge^{r}V\otimes\bigwedge^{s}V\to\bigwedge^{r+s}V$ is necessarily multiple of that one in Eq.~\eqref{eq_wed}.
Both facts  will be crucial for constructing $\e_8$ starting from its gradings, following the lines in Theorem~\ref{teo_main}. 
\medskip

Relative to this kind of constructions, an observation is in order.
    \begin{remark}\label{remark:slv} 
    In order to construct a Lie algebra  from a  \lq\lq simpler\rq\rq Lie algebra $\mathfrak{h}$  and an $\mathfrak{h}$-module $ \mathfrak{m}$,  define in $ \mathfrak{g}= \mathfrak{h}\oplus  \mathfrak{m}$ the bracket $[h+x,h'+x']:=[h,h']+h\cdot x'-h'\cdot x+\mu(x,x')$ for some fixed $\mathfrak{h}$-invariant skew-symmetric map $\mu\colon  \mathfrak{m}\times  \mathfrak{m}\to  \mathfrak{h}$, and any elements $h,h'\in\mathfrak h$, $x,x'\in\mathfrak m$, where the action of $\mathfrak{h}$ on $\mathfrak{m}$ is denoted with $\cdot$. We wonder when $(\mathfrak{g},[\ ,\ ])$ is a Lie algebra, that is, when $J (\g,\g,\g)=0$, for $J$ the Jacobian operator defined by  $J(x,y,z) := [[x,y],z]+[[y,z],x]+[[z,x],y]$. 
    Recall that $J(\hh,\hh,\hh)=0$ is a consequence of the fact that $\mathfrak{h}$ is a Lie subalgebra; and $J(\hh,\hh,\mm)=0$  is a consequence of being 
    $\mathfrak{m}$    an $\mathfrak{h}$-module. The condition $J (\hh,\mm,\mm)=0$ is equivalent to the $\mathfrak{h}$-invariance of $\mu$. All this means that $\mathfrak{g}$ is a Lie algebra if and only if $J (\mm,\mm,\mm)=0$. 
  \end{remark}

At some point, it will be necessary to make concrete computations to check when $J (\mm,\mm,\mm)=0$ to apply the above remark. To that aim, it is convenient to handle $*$, $\,\llcorner\,$ and $[\ ,\ ]$ for elements in a basis.

 Take $(e_1,\dots,e_n)$ a basis of $V$ and $(e^1,\dots,e^n)$ its dual basis of $V^*$. 
 That is, $e^i(e_j)=\delta_{ij}$ for $\delta_{ij}$ the Kronecker delta ($1$ if the variables are equal, and $0$ otherwise).
   Denote by 
   $$
   \textstyle
   e_{i_1\dots i_k}:=e_{i_1}\wedge\dots\wedge e_{i_k}\in\bigwedge^kV,\qquad e^{i_1\dots i_k}:=e^{i_1}\wedge\dots\wedge e^{i_k}\in\bigwedge^kV^*,
   $$ 
   for any $ i_1,\dots,i_k\in \{1,\dots,n\}$.  
 Thus   $\mathcal B_{k}:=\{e_{i_1\dots i_k}:1\le i_1<\dots<i_k\le n\}$ is  a basis of $ \bigwedge^kV$.
 Denote by $e_i^j\colon V\to V$ the linear map such that  $e_i^j(e_k):=\delta_{jk}e_i$.
 (With the notation at the beginning of the section,
 $e_i^j=\Psi(e^j\otimes e_i)$.) The set $\{e_i^j:1\le i,j\le n\}$ provides a basis of $\mathfrak{gl}(V)$ such that $\tr(e_i^j)=\delta_{ij}$.
 
 Let us fix $\phi(e_{1\dots n})=1$ the map in \eqref{eq_fi}.
  If  $\sigma=(\sigma(1),\dots,\sigma(n))$ denotes a permutation of $\{1,\dots,n\}$, we have $e_{\sigma(1)\dots\sigma(n)}=\sg(\sigma)\,e_{1\dots n}$, so that
  $\phi(e_{\sigma(1)\dots\sigma(n)})=\sg(\sigma)$.

  \begin{lemma}\label{le_notacionesyproductos}
  Fix $(e_1,\dots,e_n)$ a basis of $V$ with $\phi(e_{1\dots n})=1$, and take $\sigma=(i_1,\dots,i_n)$  and $(j_1,\dots,j_n)$   permutations of $\{1,\dots,n\}$. Then we have 
 \begin{itemize}
 \item[\rm (a)] $
 \widetilde{e_{i_1\dots i_k}}= \sg(\sigma) \,e^{i_{k+1}\dots i_n}\,;\vspace{3pt}    
 $
 \item[\rm (b)] 
 The map $[\ ,\ ]$ defined in Eq.~\eqref{eq_dualizando} is given, for elements in $\mathcal B_{k}$ and $\mathcal B_{n-k}$, by:\vspace{3pt}
 \begin{itemize}
 \item[$(1)$] 
 $[e_{i_1\dots i_k},e_{i_{k+1}\dots i_n}]=(-1)^{k(n-k)}\,\sg(\sigma)\,
 \big(\sum_{j=1}^ke_{i_j}^{i_j}-\frac kn\id_V\big )$;\vspace{3pt}
 \item [$(2)$]
 $[e_{i_1\dots i_k},e_{i_1i_{k+1}\dots i_{n-1}}]=(-1)^{k(n-k)}(-1)^{n+k}\,\sg(\sigma)\,
 e_{i_1}^{i_n}$;\vspace{3pt}
  \item [$(3)$]
 $[e_{i_1\dots i_k},e_{j_1\dots j_{n-k}}]=
0$ if $\{i_1,\dots, i_k\}\cap\{j_1,\dots,j_{n-k}\}$ has at least cardinal 2.\vspace{2pt}
  \end{itemize}\vspace{3pt}

    \item[\rm (c)]  
 $e_{i_1 \ldots i_{p+q}}\, \llcorner\, e^{j_1 \ldots j_{p}} =0$ if $J=\{j_1, \ldots, j_{p}\}\not\subset I= \{i_1,\ldots, i_{p+q} \}$. Otherwise, if 
 $k_1<\dots<k_p\in \{1,\dots,p+q\}$, then,
 $$
 e_{i_1 \ldots i_{p+q}}\, \llcorner\, e^{i_{k_1} \ldots i_{k_p}}= \sg\tiny
 \begin{pmatrix} i_1&\dots&i_p&i_{p+1}&& &\dots && &i_{p+q}\\i_{k_1}&\dots&i_{k_p}&i_1&\dots&\widehat{i_{k_1}}& \dots&\widehat{i_{k_p}}& \dots& {i_{p+q}}  \end{pmatrix} \,e_{i_1 \ldots \widehat{i_{k_1}} \ldots \widehat{i_{k_p}} \ldots i_{p+q}}.
 $$
 
\item[\rm (d)]  If $x\in\bigwedge^iV$, $y\in\bigwedge^jV$, then
$x*y=\begin{cases} (-1)^{ij}y*x \quad \textrm{ if $i+j\le n$,}
\\
 (-1)^{ij+(i+j-n)}y*x \quad \textrm{  if $i+j>n$.}\end{cases}$
  \end{itemize}
\end{lemma}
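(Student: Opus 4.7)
The plan is to verify each item directly on basis elements, exploiting the defining property of the corresponding map. For (a), both sides of the claimed equality lie in $\bigwedge^{n-k}V^*$, so it suffices to pair them with a basis element $e_J$ with $|J|=n-k$ and use the characterizing relation $\langle y,\widetilde{e_{i_1\ldots i_k}}\rangle = \phi(e_{i_1\ldots i_k}\wedge y)$. Both pairings vanish unless $J = \{i_{k+1},\ldots,i_n\}$ as a set, and in that case a comparison of the sign of $\sigma$ with the sign of the permutation sorting $J$ into the order $(i_{k+1},\ldots,i_n)$ shows the two values coincide. Part (c) follows at once from the shuffle formula for $\,\llcorner\,$ recorded just before the lemma: when $J\not\subset I$, each term in the shuffle expansion contains a factor $e^{j_\ell}(e_{i_{\sigma(\ell)}})=0$, and when $J = \{i_{k_1},\ldots,i_{k_p}\}$ with $k_1<\cdots<k_p$ exactly one $(p,q)$-shuffle contributes, whose sign is precisely the bracketed permutation shown in the statement.

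For part (b), the key point is that the equation $\tr(f\circ [x,y]) = (f\cdot x, y)$ determines $[x,y]\in\slf(V)$ uniquely, by non-degeneracy of the trace form on $\slf(V)$. Thus it suffices to test the proposed formulas against a basis of $\slf(V)$, which I would take as the off-diagonal matrix units $e_p^q$ ($p\neq q$) together with a family of diagonal trace-zero elements. For (b1), the proposed bracket is a diagonal matrix of trace zero, so $\tr(e_p^q\circ[x,y])=0$ on the off-diagonal side, while on the other side $\phi(e_{i_{k+1}\ldots i_n}\wedge(e_p^q\cdot e_{i_1\ldots i_k}))$ vanishes since the wedge acquires a repeated index; on a diagonal $f=\sum_m c_m e_m^m$ with $\sum_m c_m=0$, both sides directly yield $(-1)^{k(n-k)}\sg(\sigma)\sum_{j=1}^k c_{i_j}$. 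For (b2), the repeated $i_1$ forces the diagonal contribution to vanish, while among the off-diagonal $e_p^q$ only the choice $(p,q)=(i_n,i_1)$ produces a non-degenerate wedge; a careful count of the sign of the resulting reordering yields the factor $(-1)^{k(n-k)}(-1)^{n+k}\sg(\sigma)$. Part (b3) is the cleanest: when $|I\cap J|\ge 2$ the wedge $e_J\wedge(f\cdot e_I)$ has at least one repeated index for every $f\in\slf(V)$, since diagonal $f$ removes no repetitions and off-diagonal $e_p^q$ removes at most one.

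For part (d), the case $i+j<n$ is the classical anticommutativity of $\wedge$. For $i+j\ge n$, I would reduce to basis elements $x=e_I$, $y=e_J$. By (a), $\widetilde{e_J}=\sg(\sigma_J)e^{J^c}$ and $\widetilde{e_I}=\sg(\sigma_I)e^{I^c}$; by (c), the contractions $e_I\,\llcorner\,e^{J^c}$ and $e_J\,\llcorner\,e^{I^c}$ are non-zero exactly when $I\cup J=\{1,\ldots,n\}$, in which case both land on $\pm e_{I\cap J}$ with signs $\sg(\pi_I)$ and $\sg(\pi_J)$ read from (c). The identity then reduces to the sign relation $\sg(\sigma_J)\sg(\pi_I)=(-1)^{ij+(i+j-n)}\sg(\sigma_I)\sg(\pi_J)$, which I would prove by factoring each of the four permutations through a common rearrangement of $(1,\ldots,n)$ into the three ordered blocks $(J^c, I\cap J, I^c)$; the common piece cancels, and the residual block swaps and internal merges accumulate, modulo two, to the claimed exponent. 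The main obstacle will be precisely this sign bookkeeping in (d), with (b2) as a secondary source of sign juggling, while the remaining items reduce to routine verifications once the right parametrization of the blocks is fixed.
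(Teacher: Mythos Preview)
Your proposal is correct and follows essentially the same strategy as the paper: verify (a) by pairing with basis elements via the defining relation of $\widetilde{\ \ }$, deduce (c) from the shuffle formula, establish (b) by computing $(f\cdot x,y)$ against arbitrary (or basis) $f\in\slf(V)$ and invoking uniqueness from the trace form, and for (d) reduce to basis vectors, use (a) and (c) to land on $\pm e_{I\cap J}$, and compare the four resulting permutation signs. The paper carries out the sign bookkeeping in (d) a bit more concretely---writing $x=e_{i_1\dots i_s j_1\dots j_p}$, $y=e_{i_1\dots i_s k_1\dots k_q}$ and naming the four permutations explicitly to get $\sg(\sigma_1)\sg(\sigma_3)=(-1)^{pq}$, $\sg(\sigma_2)\sg(\sigma_4)=(-1)^{(p+q)s}$---but this is exactly the ``factoring through a common rearrangement into three blocks'' you outline.
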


As usual, the hat on an index in item (c) indicates that the index is omitted.
 
 \begin{proof}
 For (a) we have to check that $\langle y,\sg(\sigma)\,e^{i_{k+1}\dots i_n}\rangle=\phi(e_{i_1\dots i_k}\wedge y) $ for any $y\in\bigwedge^{n-k}V$. If
 $y=e_{i_{k+1}\dots i_n}$, clearly $\langle y,e^{i_{k+1}\dots i_n} \rangle=\det I_{n-k}=1$, so that
 both sides of the equation equal $\sg(\sigma)$. If $y$ is any other basic  vector in $\mathcal B_k$, both sides of the equation annihilate.
 
   (b1)   Note that, for any $f\in\slf(V)$,
 \begin{equation}\label{eqtrazas}
 e_{i_1}\wedge \dots \wedge  f(e_{i_k}) \wedge  \dots  \wedge   e_{i_n}=\tr(f\circ e_{i_k}^{i_k})e_{i_1\dots i_n},
 \end{equation}
 since, if we write $ f(e_{j}) =\sum_{i=1}^n a_{ij}e_i$, then $\tr(f\circ e_{i}^{i})=a_{ii}$ and 
 $e_{i_1}\wedge \dots \wedge  f(e_{i_k}) \wedge  \dots  \wedge   e_{i_n}=\sum_{i=1}^n a_{ii_k}e_{i_1\dots i_{k-1}ii_{k+1}\dots i_n}
 =a_{{i_k}{i_k}}e_{i_1\dots i_n}$. Now, 
 $$
 \begin{array}{rl}
(-1)^{k(n-k)} \tr(f\circ [e_{i_1\dots i_k},e_{i_{k+1}\dots i_n}])&=\phi(f\cdot e_{i_1\dots i_k}\wedge e_{i_{k+1}\dots i_n} ) 
   \vspace{2pt}\\
      &
 =\phi(f(e_{i_1})\wedge e_{i_2\dots i_n}  +\dots +e_{i_1\dots  i_{k-1}}\wedge f(e_{i_k})\wedge e_{i_{k+1}\dots i_n} )
  \end{array}
 $$
 coincides, by 
 \eqref{eqtrazas},
 with
 $$
  \phi\Big( \sum_{j=1}^k\tr(f\circ e_{i_j}^{i_j})  e_{i_1\dots i_n}\Big)  
 = \sg(\sigma)\tr\big(f\circ \sum_{j=1}^k e_{i_j}^{i_j}\big)   
 =\sg(\sigma)\tr\big(f\circ (\sum_{j=1}^k e_{i_j}^{i_j}-\frac kn\id_V)\big), 
 $$
 
\noindent
 since $f$ has zero trace. As
  $ \left( \sum_{j=1}^k e_{i_j}^{i_j}-\frac kn\id_V\right) \in\slf(V)$
  and Eq.~\eqref{eq_dualizando} determines a unique element in $\slf(V)$,
   so  this traceless map coincides with $(-1)^{k(n-k)}\sg(\sigma)\,[e_{i_1\dots i_k},e_{i_{k+1}\dots i_n}]$.

  (b2) Observe first  that
\begin{equation}\label{hola}
  \begin{array}{rl}
   (-1)^{k(n-k)}\tr(f\circ
[e_{i_1\dots i_k},e_{i_1i_{k+1}\dots i_{n-1}}])&= \phi(f\cdot e_{i_1\dots i_k}\wedge e_{i_1i_{k+1}\dots i_{n-1}})\vspace{2pt}\\ 
 &= \phi(f(e_{i_1})\wedge e_{i_2\dots i_{k}i_1i_{k+1}\dots i_{n-1}}  ),
 \end{array}
\end{equation}
 since the index $i_1$ appears twice in each term $e_{i_1\dots i_{j-1}}\wedge f(e_{i_{j}})\wedge e_{i_{j+1}\dots i_{k}i_1i_{k+1}\dots i_{n-1}} $, $j\le k$. 
 Since $\tr(f\circ e_i^j)=a_{ji}$,  the expression in \eqref{hola} coincides with 
 $$
 \phi\big(\sum_{i=1}^n a_{ii_1}  e_{ii_2\dots i_{k}i_1i_{k+1}\dots i_{n-1}} \big ) 
  = \phi(e_{i_ni_2\dots i_{k}i_1i_{k+1}\dots i_{n-1}}  ) a_{i_ni_1}  
 =  (-1)^{n-1+k-1} \sg(\sigma)\tr(f\circ e_{i_1}^{i_n}),
 $$
 and hence Eq.~\eqref{eq_dualizando}  gives the required expression for $[e_{i_1\dots i_k},e_{i_1i_{k+1}\dots i_{n-1}}]$.
 
  (b3) For  any $f\in\slf(V)$, we have  \vspace{-1pt}
  $$
  (f\cdot e_{i_1\dots i_k})\wedge e_{j_1\dots j_{n-k}}=\sum_{j=1}^k e_{i_1\dots i_{j-1}}\wedge f(e_{i_{j}})\wedge e_{i_{j+1}\dots i_kj_1\dots j_{n-k}}=0.
  $$
  In fact, each   term in the 
 sum  annihilates since being    the wedge product of basic vectors with at least two of them repeated.
  
 Item (c) is clear. Moreover, $e_I\,\llcorner\, e^J=\sg\tiny\begin{pmatrix}I\\J\ I\setminus J\end{pmatrix}e_{I\setminus J}$ if $J\subset I$, even though $J$ does not appear preserving the order in $I$. Here $I$ and $J$ are ordered subsets of different indices and we have used the notation $e_I$ for $e_{i_1\dots i_k}$ if $I=(i_1,\dots,i_{k})$.\vspace{2pt}

   (d) If $i+j < n$, we have already mentioned that  $x \wedge y = (-1)^{ij} y \wedge x$. Also, if $i+j = n$, then $x * y = \phi (y \wedge x)= (-1)^{ij} \phi (x \wedge y)=(-1)^{ij} y*x.$ Finally consider the case $i+j>n$. There is no loss of generality in assuming that both $x$ and $y$ are wedge products of basic vectors in $\{e_1,\dots,e_n\}$. If the union of the indices involved in $x$ and $y$ is not the whole set $\{1,\dots,n\}$, then $x*y=x\,\llcorner\,\tilde y =0$ by (a) and (c), so there is nothing to prove, since similarly $y*x=0$. Otherwise, we can write $x=e_{i_1\dots i_sj_1\dots j_p}$ and $y=e_{i_1\dots i_sk_1\dots k_q}$ with $\{1,\dots,n\}=\{i_1,\dots ,i_s,j_1,\dots, j_p,k_1,\dots ,k_q\}$, $n=s+p+q$, with the indices possibly not ordered. We compute
  $$
  \begin{array}{l}
  x*y=x\,\llcorner\,\tilde y \stackrel{(a)}=x\,\llcorner\, \sg( \sigma_1)e^{j_1\dots j_p}\stackrel{(c)}=\sg( \sigma_1)\sg( \sigma_2)e_{i_1\dots i_s},\\
  y*x=y\,\llcorner\,\tilde x =y\,\llcorner\, \sg( \sigma_3)e^{k_1\dots k_q}=\sg( \sigma_3)\sg( \sigma_4)e_{i_1\dots i_s},
  \end{array}
  $$
 for
 $$ \begin{array}{l}
 \sigma_1=(i_1,\dots ,i_s,k_1,\dots ,k_q,j_1,\dots, j_p),\quad \sigma_2=(j_1,\dots,j_p,i_1,\dots,i_s),\\
 \sigma_3=(i_1,\dots ,i_s,j_1,\dots, j_p,k_1,\dots ,k_q),\quad \sigma_4=(k_1,\dots,k_q,i_1,\dots,i_s).
 \end{array}
$$
Now $\sg( \sigma_1)\sg( \sigma_3)=(-1)^{pq}$ and $\sg( \sigma_2)\sg( \sigma_4)=(-1)^{ps+qs}$. Taking in mind that $i=p+s$, $j=q+s$ and $i+j=n+s$, then
$pq+(p+q)s=ij-(i+j-n)^2\equiv _2 ij+i+j-n$.
  \end{proof}

 \begin{remark}\label{re_Z3}
 With the  above introduced notations, there is a concise but complete description of the model of $\e_8$ based on the $\ZZ_3$-grading with fixed part $\slf(9)$.
 Take $V$ a vector space over  $\FF$ of dimension 9, and  take $\mcL=\mcL_{\bar 0}\oplus\mcL_{\bar 1}\oplus\mcL_{\bar 2}$ as 
$$
\begin{array}{l}
\mcL_{\bar 0}=\slf(V), \\
\mcL_{\bar i}=\bigwedge^{[3i]_9}V ,
\end{array}
$$
for $i=1,2 $. Fix $a_{11},a_{22},b_1\in\FF $ and define on $\mcL$ the product such that $\mcL_{\bar 0}$ is subalgebra, the action of $\mcL_{\bar 0}$ on each $\mcL_{\bar i}$ is the usual one, and, if $i,j\in\{1,2\}$, $x\in \mcL_{\bar i}$, $y\in \mcL_{\bar j}$,
\begin{equation}\label{model_Z3}
[x,y]_{\mcL}=\left\{
\begin{array}{ll}
a_{ij}\,x*y\quad\  &i+j\ne3,\\
b_i[x,y]\ &i+j=3.\end{array}\right.
\end{equation}
Then $\mcL$ is a Lie algebra if and only if $a_{11}a_{22}+b_1=0$,  following Remark~\ref{remark:slv} 
for $\mathfrak{h}=\mcL_{\bar 0} $ and $\mathfrak{m}=\mcL_{\bar 1}\oplus\mcL_{\bar 2}$.    If besides the three scalars are nonzero, then the obtained $\ZZ_3$-graded Lie algebra  is isomorphic to $\e_8$, because there is  only one 
 simple Lie algebra of dimension $248$ over $\FF$, up to isomorphism. For instance, this is the case for the choice of scalars
$a_{11}=a_{22}=-b_1=1$.

This description of $\e_8$ for $(G,\mathfrak h)=(\mathbb Z_3,A_8)$ is essentially the same as   the ones mentioned in Introduction based on $A_8$, but the comparison is worthwhile. First, the model in \cite{EMSIII} and \cite{trivectoresZ3} gives, for $S, S_1,S_2\in\bigwedge^3V$ and $T,T^1,T^2\in\bigwedge^3V^*$,
the products
$$\begin{array}{c}
[S,T]_j^i=\frac12S^{ijk}T_{jkl}-\frac1{18}S^{klm}T_{klm}\delta_j^i,\quad
[S_1,S_2]_{ijk}=\frac1{36}\varepsilon_{i_1j_1k_1i_2j_2k_2ijk}S_1^{i_1j_1k_1}S_2^{i_2j_2k_2},\vspace{5pt}
\\
{[}T^1,T^2]^{ijk}=-\frac1{36}\varepsilon^{i_1j_1k_1i_2j_2k_2ijk}T^1_{i_1j_1k_1}T^2_{i_2j_2k_2}.
\end{array}
$$
Here the products are expressed in coordinates, which would be more difficult to write when increasing the number of pieces of the considered grading. 
Second, the description in \cite[\S22.4]{FultonHarris} uses two fixed trilinear maps
$T\colon \bigwedge^3(\bigwedge^3V)\to \mathbb C$ and $T\colon \bigwedge^3(\bigwedge^3V)^*\to \mathbb C$ to give a pair of identifications $\wedge\colon \bigwedge^2(\bigwedge^3V)\to(\bigwedge^3V)^*$ and $\wedge\colon \bigwedge^2(\bigwedge^3V)^*\to\bigwedge^3V$ to define thereafter (for $a$, $b$ and $c$ fixed  scalars)
$$
[v,w]=a\,v\wedge w,\quad [\varphi,\psi]=b\,\varphi\wedge \psi,\quad [v,\varphi]=c\,v\star\varphi,
$$
where $v\star\varphi$ is determined by $B(v\star\varphi,Z)=\varphi(Z\cdot v)$, for $B$ the Killing form, $v\in \bigwedge^3V$, $\varphi\in \bigwedge^3V^*$ and $Z\in\slf(V)$. With this notation, the Jacobi identity is equivalent to $18ab+c=0$ (\cite[Exercise~22.21]{FultonHarris}). Our version is quite inspired in this last one, but we have needed to establish a notation that would allow the procedure to be easily generalised  (observe the similarity between Eq.~\eqref{model_Z3} and   Proposition~\ref{teo_Z5model}).
 \end{remark}
 
 \begin{remark}\label{re_dim2}
 In the specific case of $U$ a vector space of dimension 2, we   can give more  concrete descriptions of the invariant   maps $(\ ,\ )\colon U\times U\to\FF$  and
 $[\ ,\ ]\colon U\times U\to\slf(U)$ considered in Eqs.~\eqref{eq_laforma} and \eqref{eq_dualizando}. If $\varphi\colon U\times U\to \FF$ is a nonzero alternating map, and $ \spf(U,\varphi)=\{f\in\mathfrak{gl}(U):\varphi(f(u),v)+\varphi(u,f(v))=0\}$ is the corresponding symplectic Lie algebra, then   the map $\varphi_{u,v}:=\varphi(u,-)v+\varphi(v,-)u$ belongs to $\mathfrak{sp}(U,\varphi)$. Moreover 
 $\mathfrak{sp}(U,\varphi)=\{\sum_i\varphi_{u_i,v_i}:u_i,v_i\in U\}$ coincides with the algebra $\slf(U)$. 
 If we choose $(e_1,e_2)$ a basis of $U$ with $\varphi(e_1,e_2)=1$, and the   isomorphism $\phi$  in \eqref{eq_fi}    is taken as $\phi(e_1\wedge e_2)=1  $, then     straightforward computations give:
   $$
(u,v)= u*v=u\,\llcorner\,\tilde v=-\varphi(u,v), \qquad [u,v]=\frac12\varphi_{u,v}.     
 $$ 
  \end{remark}

  
 \subsection{ $\ZZ_m$-gradings on simple Lie algebras}\label{se_gradings}
 
 Any $\ZZ_m$-grading  on a simple Lie algebra $\mcL=\oplus_{\bar j\in\ZZ_m}\mcL_{\bar j} $ is the eigenspace decomposition of an order $m$ automorphism $\theta$ of $\mcL$, namely, the automorphism defined as $\theta\vert_{\mcL_{\bar j} }=\xi^j\id$, for $\xi\in\FF$  a fixed primitive $m$th root of the unit. The converse is also true and any order $m$ automorphism of $\mcL$ produces  a $\ZZ_m$-grading on $\mcL$. In particular, the neutral component $\mcL_{\bar 0}$ is the subalgebra fixed by $\theta$. Moreover, each $\mcL_{\bar j}$ is an irreducible $\mcL_{\bar 0}$-module.
  
 The finite order automorphisms of $\e_8$ are classified, up to conjugation, in \cite[\S 8.6]{Kac} in terms of affine Dynkin diagrams and sequences of relatively prime nonnegative integers $(s_0,\ldots,s_8)$. If the fixed subalgebra of certain   nontrivial finite order automorphism $\theta$  is semisimple, then \cite[Proposition 8.6]{Kac} shows that the related sequence is of the form $(0,\ldots,1,\ldots,0)$ with only one $s_i=1$ and $i>0$. In such case,  the order of $\theta$  is just $m_i$, if 
 $\tilde \alpha=2 \alpha_1+3\alpha_2+4\alpha_3+6\alpha_4+5\alpha_5+4\alpha_6+3\alpha_7+2\alpha_8 =\sum_{j=1}^8 m_j\alpha_j $ denotes the maximal root of  a root system $\Phi$ of $\e_8$ relative to some Cartan subalgebra $H$
    and to some set of simple roots $\{\alpha_1,\dots,\alpha_8\}$   of $\Phi$.  Moreover, the   subalgebra fixed by $\theta$ has as Dynkin diagram the obtained one when removing the $i$th node of the 
extended affine Dynkin diagram $E_8^{(1)}$,

 \vspace{-3pt}\begin{center}{
\begin{picture}(30,5)(3,0.5)  
\put(5,0){\circle{1}} \put(9,0){\circle{1}} \put(13,0){\circle{1}}
\put(17,0){\circle{1}} \put(21,0){\circle{1}}\put(25,0){\circle{1}}\put(29,0){\circle{1}}\put(33,0){\circle{1}}
\put(20.5,-2){$\scriptstyle _{4}$} \put(24.5,-2){$\scriptstyle _{3}$} \put(28.5,-2){$\scriptstyle _{2}$} \put(32.5,-2){$\scriptstyle _{1}$}  
\put(16.5,-2){$\scriptstyle _{5}$} \put(12.5,-2){$\scriptstyle _{6}$} \put(8.5,-2){$\scriptstyle _{4}$} \put(4.5,-2){$\scriptstyle _{2}$} \put(14,4){$\scriptstyle _{3}$} 
\put(13,4){\circle{1}}
\put(5.5,0){\line(1,0){3}}
\put(9.5,0){\line(1,0){3}} \put(13.5,0){\line(1,0){3}}  
\put(17.5,0){\line(1,0){3}} 
\put(21.5,0){\line(1,0){3}}
\put(25.5,0){\line(1,0){3}}
\put(29.5,0){\line(1,0){3}}   
\put(13,0.5){\line(0,1){3}}
\end{picture}
}\end{center}\vskip0.4cm

\noindent since $\{-\tilde\alpha,\alpha_j:j\ne i\}$ is a set of simple roots of $\mcL_{\bar0}$.
 This entails that, up to conjugation, there are 8  finite order automorphisms of $\e_8$  whose fixed subalgebra is semisimple: two of order 2, two of order 3, two of order 4,   one of order 5 and another one of order 6.  The choice $i=2$ provides the order 3 automorphism of $\mcL=\e_8$ with fixed part $  \slf(9) $, 
corresponding to the model detailed in Remark~\ref{re_Z3}.
The choices
$i=1,6,7,8$, provide gradings with neutral components with summands of type $D$ and $E$. There are only 3 cases left, producing gradings on $\e_8$ over cyclic groups:
\begin{itemize}
\item If $i=5$, we have a $\ZZ_5$-grading on $ \e_8$ with $\mcL_{\bar0}\cong \slf(5)\oplus\slf(5)$, since the removed diagram is

\vspace{-10pt} \begin{center}{
\begin{picture}(30,5)(3,0.5)  
\put(5,0){\circle{1}} \put(9,0){\circle{1}} \put(13,0){\circle{1}}
\put(17,0){\circle*{1}} \put(21,0){\circle{1}}\put(25,0){\circle{1}}\put(29,0){\circle{1}}\put(33,0){\circle{1}}
\put(20.5,-2){$\scriptstyle _{4}$} \put(24.5,-2){$\scriptstyle _{3}$} \put(28.5,-2){$\scriptstyle _{2}$} \put(32.5,-2){$\scriptstyle _{1}$}  
\put(16.5,-2){$\scriptstyle _{5}$} \put(12.5,-2){$\scriptstyle _{6}$} \put(8.5,-2){$\scriptstyle _{4}$} \put(4.5,-2){$\scriptstyle _{2}$} \put(14,4){$\scriptstyle _{3}$} 
\put(13,4){\circle{1}}
\put(5.5,0){\line(1,0){3}}
\put(9.5,0){\line(1,0){3}} 
\put(21.5,0){\line(1,0){3}}
\put(25.5,0){\line(1,0){3}}
\put(29.5,0){\line(1,0){3}}   
\put(13,0.5){\line(0,1){3}}
\end{picture}
}  \end{center}\vspace{8pt}

\item If $i=3$, we have a $\ZZ_4$-grading on $\mcL=\e_8$ with $\mcL_{\bar0}\cong \slf(8)\oplus\slf(2)$, since the removed diagram is

\vspace{-10pt}\begin{center}{
\begin{picture}(30,5)(3,0.5)  
\put(5,0){\circle{1}} \put(9,0){\circle*{1}} \put(13,0){\circle{1}}
\put(17,0){\circle{1}} \put(21,0){\circle{1}}\put(25,0){\circle{1}}\put(29,0){\circle{1}}\put(33,0){\circle{1}}
\put(20.5,-2){$\scriptstyle _{4}$} \put(24.5,-2){$\scriptstyle _{3}$} \put(28.5,-2){$\scriptstyle _{2}$} \put(32.5,-2){$\scriptstyle _{1}$}  
\put(16.5,-2){$\scriptstyle _{5}$} \put(12.5,-2){$\scriptstyle _{6}$} \put(8.5,-2){$\scriptstyle _{4}$} \put(4.5,-2){$\scriptstyle _{2}$} \put(14,4){$\scriptstyle _{3}$} 
\put(13,4){\circle{1}}
\put(13.5,0){\line(1,0){3}}  
\put(17.5,0){\line(1,0){3}} 
\put(21.5,0){\line(1,0){3}}
\put(25.5,0){\line(1,0){3}}
\put(29.5,0){\line(1,0){3}}   
\put(13,0.5){\line(0,1){3}}
\end{picture}
}    \end{center} \vspace{8pt}

\item If $i=4$, we have a $\ZZ_6$-grading on $ \e_8$ with $\mcL_{\bar0}\cong \slf(6)\oplus\slf(2)\oplus\slf(3)$, since the removed diagram is

\vspace{-10pt}\begin{center}{
\begin{picture}(30,5)(3,0.5)  
\put(5,0){\circle{1}} \put(9,0){\circle{1}} \put(13,0){\circle*{1}}
\put(17,0){\circle{1}} \put(21,0){\circle{1}}\put(25,0){\circle{1}}\put(29,0){\circle{1}}\put(33,0){\circle{1}}
\put(20.5,-2){$\scriptstyle _{4}$} \put(24.5,-2){$\scriptstyle _{3}$} \put(28.5,-2){$\scriptstyle _{2}$} \put(32.5,-2){$\scriptstyle _{1}$}  
\put(16.5,-2){$\scriptstyle _{5}$} \put(12.5,-2){$\scriptstyle _{6}$} \put(8.5,-2){$\scriptstyle _{4}$} \put(4.5,-2){$\scriptstyle _{2}$} \put(14,4){$\scriptstyle _{3}$} 
\put(13,4){\circle{1}}
\put(5.5,0){\line(1,0){3}}
\put(17.5,0){\line(1,0){3}} 
\put(21.5,0){\line(1,0){3}}
\put(25.5,0){\line(1,0){3}}
\put(29.5,0){\line(1,0){3}}   
\end{picture}
}  \end{center}\vspace{5pt}

 \end{itemize}

In order to be even more precise, the  homogeneous components of the $\ZZ_{m_i}$-grading related to an index $i$ can be written as a sum of root spaces: 
$$
\begin{array}{l}
\mcL_{\bar 0}=H\oplus\big(\oplus\{\mcL_\alpha:\alpha=\sum_{k=1}^8 t_k\alpha_k\in\Phi,\, t_i\equiv_{m_i}0\}\big),\vspace{2pt}\\
\mcL_{\bar j}=\oplus\{\mcL_\alpha:\alpha=\sum_{k=1}^8 t_k\alpha_k\in\Phi,\, t_i\equiv_{m_i} j\}.
\end{array}
$$
Although this description of the grading is very concrete, it is not always easy to work with root spaces, because it is not easy to know  the specific scalars when multiplying two root vectors. 
 In Sections \ref{se_2A4}, \ref{se_A1A7} and \ref{seZ6} we will deal with the above $\ZZ_5$, $\ZZ_4$ and $\ZZ_6$-gradings, respectively, giving models which will not make use of the root spaces. 
 In Sections \ref{se_4A2} and \ref{se_2A12A3}, we will deal with gradings over noncyclic groups, namely, $\ZZ_3^2$ and $\ZZ_2\times \ZZ_4$. They have been  obtained by considering two  finite order commuting automorphisms (related to the nodes $7$th, $8$th and $6$th, respectively), whose homogeneous components can again be described in terms of root spaces.

 
 \subsection{Model based on the subalgebra of type $2A_4$}\label{se_2A4}
 
As mentioned, a $\ZZ_5$-grading on $\mcL=\e_8$ is obtained when removing the node of the extended Dynkin diagram marked with $5$, so that the neutral component $\mcL_{{\bar 0}}$ is of type $2A_4$ and the homogeneous components are $\mcL_{{\bar 0}}$-irreducible modules, each of them necessarily tensor product of two $\slf_5(\FF)$-irreducible modules. 
The precise description of   the involved irreducible modules can be concluded from their dimensions and from the dimensions of the tensor products among them. Alternatively, this   decomposition appears  in \cite[Table~5]{Minchenko}, which is a work on the classification of  semisimple subalgebras, as well as  in \cite{Alb_Jordan gradings}, where an interesting refinement of such $\ZZ_5$-grading is studied. 
According to these references, there is  a vector space $V$ of $\FF$-dimension 5, such that, up to isomorphism, the $\mcL_{\bar 0}$-modules $\mcL_{\bar i}$ can be identified to:
\begin{equation}\label{model_2A4}
\begin{array}{ll}
\mcL_{\bar 0}&=\slf(V)\oplus\slf(V)\\
\mcL_{\bar i}&=\bigwedge^{[i]_5}V\otimes\bigwedge^{[2i]_5}V\qquad
i=1,2,3,4.
\end{array}
\end{equation}
Thus $\dim\mcL_{\bar 0}=48$ and $\dim\mcL_{\bar i}=50$ for any $i=1,2,3,4$.
Keep in mind that the action of $\mcL_{\bar 0}$ on $\mcL_{\bar i}$ is given by $((f , g),   x\otimes y) \mapsto f\cdot x \otimes y + x \otimes g\cdot y$ for any $x\in \bigwedge^{i}V$ and $y\in \bigwedge^{[2i]_5}V$, that is, each simple component of $\mcL_{\bar 0}$ acts on the corresponding module.
We abuse of the notation  using $f+g$ for $(f,g)\in \mcL_{\bar 0}$ when there is no confusion with the copy of $\slf(V)$ to which $f$ and $g$ belong. 
When this is not clear, we will use $f_1+g_2$.
Next, let us  change the viewpoint and begin with a graded vector space $\mcL$ with homogeneous components as in Eq.~\eqref{model_2A4}.
We will  endow this graded vector space  $\mcL$ with a    Lie algebra structure, to recover our exceptional Lie algebra.

\begin{proposition} \label{teo_Z5model}

Let $\mcL$ be the $\ZZ_5$-graded vector space given in Eq.~\eqref{model_2A4} for $V$ a vector space of $\FF$-dimension 5. Consider the product of $\mcL_{\bar 0}$ with $\mcL_{\bar i}$ given by the action of the $j$th copy of $\slf(V)$ on the $j$th slot ($j=1,2$). 
Fix some nonzero scalars $a_{ij},b_k^{(1)},b_k^{(2)}\in\FF^\times$ for any  $1\le i\le j\le 4$, $i+j\ne5$, $k=1,2$. Define
 the   bracket of $\mcL_{\bar i}$ with $\mcL_{\bar j}$, for any  $1\le i\le j\le 4$, by   
\begin{equation}\label{eq_pr2A4}
[x_1\otimes x_2,y_1\otimes y_2]:=\left\{
\begin{array}{ll}
a_{ij}\ x_1*y_1\otimes x_2*y_2&i+j\ne5,\\
b_i^{(1)}\,(x_2,y_2)[x_1,y_1]_1\,+\, b_i^{(2)}\,(x_1,y_1)[x_2,y_2]_2\qquad&i+j=5,\end{array}\right.
\end{equation}
for    
  any  $x_1\in\bigwedge^i V$, $x_2\in\bigwedge^{[2i]_5}V$, $y_1\in\bigwedge^j V$ and $y_2\in\bigwedge^{[2j]_5}V$.
Extend the bracket to the whole $\mcL$ such that it is skew-symmetric and the restriction to $\mcL_{\bar 0}$ is the usual bracket in $2\slf(V)$.
Then, $\mcL$ endowed with the  product $[\ ,\ ]$   is a ($\ZZ_5$-graded)   Lie algebra if and only if 
\begin{equation}\label{eq_SOL}
\begin{array}{l}
b_1^{(1)}=b_1^{(2)}=-a_{11}a_{24}=-a_{12}a_{34}=-a_{13}a_{44}, \\
b_2^{(1)}=b_2^{(2)}=-a_{12}a_{33}=-a_{22}a_{34}=-a_{13}a_{24}.\end{array}
\end{equation}
Moreover, such Lie algebra $(\mcL, [\ ,\ ])$ is   simple and isomorphic to $\e_8$. 
A solution of this system of   equations is, for instance,   
\begin{equation}\label{eq_unasol}
\begin{array}{rl}
1&=a_{11}=a_{22}=a_{33}=a_{34}=a_{44}=a_{12}=a_{13}=a_{24},\\
-1&=b_{1}^{(1)}=b_{1}^{(2)}=b_{2}^{(1)}=b_{2}^{(2)}.\end{array}
\end{equation}
In particular, $(\mcL, [\ ,\ ]  )$ with the bracket extending the one in \eqref{eq_pr2A4}, 
for the scalars in \eqref{eq_unasol}, gives a model of the split Lie algebra of type $E_8$.  
\end{proposition}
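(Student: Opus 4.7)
\emph{Proof proposal.} Applying Remark~\ref{remark:slv} with $\mathfrak{h} = \mcL_{\bar 0}$ and $\mathfrak{m} = \bigoplus_{i=1}^{4} \mcL_{\bar i}$, I reduce to checking $J(\mm,\mm,\mm)=0$. Indeed, $\mcL_{\bar 0}$ is by construction a Lie subalgebra, each $\mcL_{\bar i}$ is an $\mcL_{\bar 0}$-module (with the two copies of $\slf(V)$ acting on the two slots independently), and the bracket on $\mm\times\mm$ is $\mcL_{\bar 0}$-invariant since its building blocks $*$, $(\ ,\ )$ and $[\ ,\ ]$ from Section~\ref{se_invariantproducts} are all $\slf(V)$-invariant.

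The $\ZZ_5$-grading forces $J(\mcL_{\bar i},\mcL_{\bar j},\mcL_{\bar k})\subseteq\mcL_{\overline{i+j+k}}$, so by the cyclic symmetry and skew-symmetry of $J$ it suffices to check vanishing on each unordered multiset $\{\!\{i,j,k\}\!\}$ with entries in $\{1,2,3,4\}$. Triples with $i+j+k\not\equiv 0\pmod 5$ produce quadratic relations on the $a_{pq}$'s only; these are automatic, since by Theorem~\ref{teo_main}(d) the relevant $\mcL_{\bar 0}$-invariant Hom spaces are one-dimensional, and the three Jacobi terms differ only by the super-commutativity signs in Lemma~\ref{le_notacionesyproductos}(d) acting slot-by-slot, which add to zero because the associativity of the wedge product (in each slot) makes the three \lq\lq double brackets\rq\rq\ proportional with cancelling signs. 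Hence the essential cases are the four multisets summing to $0\pmod 5$, namely $\{\!\{1,1,3\}\!\}$, $\{\!\{1,2,2\}\!\}$, $\{\!\{3,3,4\}\!\}$ and $\{\!\{2,4,4\}\!\}$, whose Jacobi lands in $\mcL_{\bar 0}=\slf(V)_1\oplus\slf(V)_2$.

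For each of these four multisets I evaluate $J$ on decomposable tensors. By formula~\eqref{eq_pr2A4}, each double bracket is a product of a scalar of the form $(\ ,\ )$ on one slot with a map $[\ ,\ ]_k$ on the other, preceded by a factor $a_{ij}$ coming from the inner bracket. Projecting $J=0$ onto each of the two $\slf(V)$-summands of $\mcL_{\bar 0}$ and invoking again Schur's lemma (using Theorem~\ref{teo_main}(d) and the one-dimensionality of $\hom_{\slf(V)}(\bigwedge^rV\otimes\bigwedge^{5-r}V,\slf(V))$ from Section~\ref{se_invariantproducts}) reduces each projection to a single scalar equation. The multisets $\{\!\{1,1,3\}\!\}$ and $\{\!\{3,3,4\}\!\}$ both involve the pair $(b_1^{(1)},b_1^{(2)})$ and produce the first line of~\eqref{eq_SOL}, while $\{\!\{1,2,2\}\!\}$ and $\{\!\{2,4,4\}\!\}$ involve $(b_2^{(1)},b_2^{(2)})$ and produce the second line. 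The four cases thus yield exactly the system~\eqref{eq_SOL}, and conversely any solution of~\eqref{eq_SOL} makes $J(\mm,\mm,\mm)=0$.

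Given~\eqref{eq_SOL}, the explicit choice~\eqref{eq_unasol} produces a $\ZZ_5$-graded Lie algebra of dimension $\dim\mcL=48+4\cdot 50=248$. It is simple: every bracket $[\mcL_{\bar i},\mcL_{\bar j}]$ is nonzero because all $a_{ij}$ and $b_i^{(k)}$ are nonzero, each $\mcL_{\bar i}$ is $\mcL_{\bar 0}$-irreducible, and $\mcL_{\bar 0}$ is itself generated by the brackets $[\mcL_{\bar i},\mcL_{\bar{5-i}}]$, so any nonzero graded ideal would exhaust $\mcL$. Since $\e_8$ is, up to isomorphism, the unique simple Lie algebra of dimension $248$ over $\FF$, the constructed algebra is isomorphic to $\e_8$ (this is also the content of Theorem~\ref{teo_main} applied to the subsystem $\Phi'$ of type $2A_4$ inside the root system of $\e_8$). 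The main obstacle is the sign bookkeeping in the third paragraph above: the two slots carry exterior powers $\bigwedge^{[i]_5}V$ and $\bigwedge^{[2i]_5}V$ of different degrees, so the signs from Lemma~\ref{le_notacionesyproductos}(d), from the dualization $x\mapsto\widetilde{x}$, and from the pairing $(\ ,\ )$ combine nontrivially across slots; a careful and uniform tabulation is required to extract precisely the scalar identities in~\eqref{eq_SOL}.
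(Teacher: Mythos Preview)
There is a genuine gap in the third paragraph. Your claim that the Jacobi identities $(J_{i,j,k})$ with $i+j+k\not\equiv 0\pmod 5$ are \emph{automatic} is incorrect, and with it the assertion that only the four multisets $\{\!\{1,1,3\}\!\}$, $\{\!\{1,2,2\}\!\}$, $\{\!\{3,3,4\}\!\}$, $\{\!\{2,4,4\}\!\}$ matter. The operation $*$ in~\eqref{eq_starproduct} is \emph{not} the wedge product in each slot: once the degrees exceed $5$ it becomes a contraction, and wedge--contraction compositions are not associative. For instance $(J_{1,1,2})$ (whose target is $\mcL_{\bar 4}$, not $\mcL_{\bar 0}$) forces the nontrivial relation $a_{11}a_{22}=a_{12}a_{13}$, and $(J_{1,3,3})$, $(J_{2,2,4})$ give further independent relations among the $a_{ij}$'s. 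None of these follow from your sign argument.

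More seriously, you overlook the triples in which exactly one pair sums to $5$ but the total does not. These are precisely the cases where a $b_i^{(k)}$ enters through an inner bracket landing in $\mcL_{\bar 0}$, followed by the module action. The relation $b_1^{(1)}=b_1^{(2)}=-a_{11}a_{24}$ in~\eqref{eq_SOL} comes from $(J_{1,1,4})$ (target $\mcL_{\bar 1}$), and $b_2^{(2)}=-a_{22}a_{34}$ comes from $(J_{2,2,3})$ (target $\mcL_{\bar 2}$); neither appears in your list of four. Your four triples only produce relations of the shape $a_{pq}b_1^{(k)}=a_{rs}b_2^{(k)}$, which by themselves cannot yield $b_1^{(1)}=b_1^{(2)}$ nor any of the equalities $b_i^{(k)}=-a_{pq}a_{rs}$ listed in~\eqref{eq_SOL}. (Incidentally, your attribution of which $b$'s occur in each of the four is also off: e.g.\ $\{\!\{3,3,4\}\!\}$ involves $a_{33}b_1$ and $a_{34}b_2$, not the pair $(b_1^{(1)},b_1^{(2)})$ alone.) The paper's proof proceeds by actually working through representative triples such as $(J_{1,1,2})$, $(J_{1,1,3})$, $(J_{1,1,4})$, $(J_{1,3,3})$, $(J_{2,2,3})$, $(J_{2,2,4})$, extracting the necessary equations by evaluation on basis elements, and then checking that~\eqref{eq_SOL} implies all the remaining ones; the Schur-type argument you invoke only shows that each $(J_{i,j,k})$ reduces to at most a couple of scalar equations, not that those equations are trivial.
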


\begin{remark}\label{re_haysol}
As in the paragraph above the theorem, the split Lie algebra of type $E_8$ has a decomposition like  \eqref{model_2A4} given by its $\ZZ_5$-grading. Now, taking into account 
item d) in  Theorem~\ref{teo_main}, the product in such Lie algebra has to be the one considered in   \eqref{eq_pr2A4} for some choice of scalars.
 This implies that there exist  scalars $a_{ij},b_i^{(1)},b_i^{(2)}\in\FF^\times$ such that the graded vector space $\mcL$, endowed with the bracket in \eqref{eq_pr2A4}, is a Lie algebra.
 So,
 the system of equations with variables $a_{ij},b_i^{(1)},b_i^{(2)}\in\FF^\times$ making  $\mcL$ a Lie algebra always possesses  a solution. 
 This a priori knowledge  that solutions exist will save many computations in the proof of Proposition~\ref{teo_Z5model} (and throughout the manuscript). 
\end{remark}

\begin{proof}
First, note that it makes sense 
to deal with the skew-symmetric extension of the bracket because, 
for $i=j$, it is true that $[x_1\otimes x_2,y_1\otimes y_2]=-[y_1\otimes y_2,x_1\otimes x_2]$. In fact, for $x,y\in\bigwedge^i V$, Lemma~\ref{le_notacionesyproductos} says that $x*y=(-1)^{i^2}y*x$ if $2i\le5$ and $x*y=(-1)^{i^2-5}y*x$ if $2i>5$. So $x*y=-y*x$ if $i=1,4$ and $x*y=y*x$ if $i=2,3$, but if $i\in\{1,4\}$ then $2i\in\{2,3\}$ and if $i\in\{2,3\}$ then $[2i]_5\in\{1,4\}$.

   The constructed skew-symmetric   algebra $(\mcL, [\ ,\ ]  )$ is a Lie algebra if and only if $J(\mcL_{\bar i},\mcL_{\bar j},\mcL_{\bar k})=0$ for $0\le i,j,k\le 4$, because the Jacobian operator is trilinear. 
   Taking into consideration Remark \ref{remark:slv}, that condition reduces to 
   $J(\mcL_{\bar i},\mcL_{\bar j},\mcL_{\bar k})=0$ for $1\le i\le j\le k\le 4$. 
    
 A relevant observation is that
  $J(\mcL_{\bar i},\mcL_{\bar i},\mcL_{\bar i})=0$ always holds for all $0 \leq i \leq 4$, independently of the choice of scalars. Although that fact could be checked directly for each $i$, an indirect argument immediately gives the result:  by Remark~\ref{re_haysol}, the system of equations with variables $a_{ij},b_i^{(1)},b_i^{(2)}\in\FF^\times$ always has at least one solution.  Now note that, for any $x,y,z\in\mcL_{\bar i}$,  $1 \le i \leq 4$, the expression $\frac{J(x,y,z)}{a_{ii}a_{[2i]_5,i}}$ does not involve any of such   variables, but  it is only written in terms of  $x$, $y$, $z$, $*$, $\sim$ and $\llcorner$.
  (Here we use the notation $a_{ij}=a_{ji}$ if $j>i$; for avoiding to distinguish the value of $i$.) This means that either it vanishes for any $x,y,z\in\mcL_{\bar i}$ or it does not vanish for some choice of homogeneous elements $x,y,z$. The mentioned existence of solutions of the system of equations implies that the situation can not be the second one.

Denote by $(J_{ijk})$ the identity $J(\mcL_{\bar i},\mcL_{\bar j},\mcL_{\bar k})=0$. We   proceed by checking  that the identity $(J_{ijk})$ holds case by case.  It is enough to check it by taking    elements in $\mcL_{\bar l}=\bigwedge^{l}V\otimes\bigwedge^{[2l]_5}V$ which are tensor product of elements in $\bigwedge^{l}V$ and $\bigwedge^{[2l]_5}V$, since the Jacobian operator $J$ is trilinear. The same multilinearity says that we can assume that these elements are wedge products of elements in $V$. So, take  $\{u_i,v_i:i\in\mathbb N\}$ arbitrary elements in $V$, and use  the notation $u_{i_1\dots i_k}$ for $u_{i_{1}}\wedge\dots\wedge u_{i_{k}}$ as in Lemma~\ref{le_notacionesyproductos} (and similarly for $v_i$'s). Sometimes we will restrict to basic elements of $\bigwedge^{l}V$, obtained from a fixed basis $\{e_i:i=1,\dots,5\}$ of $V$ such that $\phi(e_{12345})=1$.
\medskip

\noindent    $ \boxed{ (J_{1,1,2})}$
Take $x=u_1 \otimes v_{12} \in \mcL_{\bar 1}$, $y=u_2 \otimes v_{34} \in \mcL_{\bar 1}$, $z=u_{34} \otimes v_{5678} \in \mcL_{\bar 2}$. Then 
$$
\begin{array}{ll}
J(x,y,z) =& a_{11}a_{22}\, u_{1234} \otimes (v_{1234} \,\llcorner\, \alpha )\\ 
& -a_{12}a_{13}\, (u_{1234} \otimes (v_{12} \wedge (\alpha(v_3) v_4-\alpha(v_4) v_3 )) \\
& - a_{12} a_{13}\, (u_{1234} \otimes (v_{34} \wedge (\alpha(v_1) v_2 - \alpha(v_2) v_1 )),
\end{array}
$$
for $\alpha= \widetilde{v_{5678}}$. But $(v_{1234} ) \,\llcorner\, \alpha = \alpha (v_1)  v_2 \wedge v_3 \wedge v_4 - \alpha(v_2) v_1   \wedge v_3 \wedge v_4 + \alpha(v_3) v_1 \wedge v_2  \wedge v_4- \alpha(v_4) v_1 \wedge v_2 \wedge v_3  $, so that  
$$
J(x,y,z)  = (a_{11}a_{22}-a_{12} a_{13}) (u_{1234} \otimes (v_{1234} \,\llcorner\, \alpha)) .
$$ 
This vanishes (for any $x$, $y$ and $z$) if and only if  $a_{11}a_{22}=a_{12} a_{13}$.  

 \medskip
\noindent    $ \boxed{ (J_{1,1,3})}$
Take $x=u_1 \otimes v_{12} \in \mcL_{ \bar 1}$ , $y=u_2 \otimes v_{34} \in \mcL_{\bar 1}$, $z=u_{345} \otimes v_{5} \in \mcL_{ \bar 3}$. 
We compute 
$$ 
    \begin{array}{rl}
J(x,y,z) =&   \phi (v_{12345})\big( a_{11} b_2^{(1)}  [u_{12},u_{345}] -a_{13}  b_1^{(1)} ([u_1,u_{2345}]-[u_2,u_{1345}] )\big)_1\\
&+ \ \phi (u_{12345})\big( a_{11} b_2^{(2)}  [v_{1234},v_5]-a_{13}  b_1^{(2)} ( [v_{34},v_{125}]+[v_{12},v_{345}])\big)_2\in \mcL_{ \bar 0}.
\end{array}
$$
This expression is zero if and only its projections on each copy of $\slf(V)$ are zero. But, these projections are zero if and only if \begin{equation}\label{ecu:jacobiano1}
    a_{11}b_2^{(1)}\tr(f \circ[u_{12},u_{345}]) - a_{13}b_1^{(1)}\tr(f \circ ([u_1,u_{2345}]- [u_2,u_{1345}]))=0,
\end{equation} 
and
\begin{equation}\label{ecu:jacobiano2}
    a_{11}b_2^{(2)}\tr(f \circ[v_{1234},v_{5}]) - a_{13}b_1^{(2)}\tr(f \circ ([v_{12},v_{345}] + [v_{34},v_{125}]))=0,
\end{equation}  for every $f \in \slf (V)$, since it is  always possible to choose $u$'s  such that $\phi (u_{12345})\ne0$ (similarly for   $v$'s). 
According to the definition of ${[ \ , \ ]} $ in  Eq.~\eqref{eq_dualizando}, we can rewrite \eqref{ecu:jacobiano1} as
$$
a_{11}b_2^{(1)}( f \cdot u_{12}, u_{345})- a_{13}b_1^{(1)}\big((f \cdot u_1, u_{2345}) - (f \cdot u_2, u_{1345})\big)=0.
$$
Taking in mind that $f \cdot u_{12}\wedge u_{345}=(f \cdot u_1)\wedge u_{2345}-(f \cdot u_2)\wedge u_{1345}$, the above expression can be written as 
$$
(a_{11}b_2^{(1)}- a_{13}b_1^{(1)})( f \cdot u_{12}, u_{345})=0,
$$
which holds for any choice of $u$'s if and only if $a_{11}b_2^{(1)}=a_{13}b_1^{(1)}$.
Also $f\cdot v_{1234}\wedge v_5=(f\cdot v_{12})\wedge v_{345}+(f \cdot v_{34})\wedge v_{125}$, so that 
 \eqref{ecu:jacobiano2} is equivalent to
 $$
(a_{11}b_2^{(2)}-a_{13}b_1^{(2)})(f\cdot v_{1234}, v_5) =0, 
$$
which is always true when $a_{11}b_2^{(2)}=a_{13}b_1^{(2)}$.
  \medskip
 
\noindent    $ \boxed{ (J_{1,1,4})}$
If  $x=u_1\otimes v_{12} ,\ y=u_2 \otimes v_{34} \in \mcL_{\bar 1}$ and $z=u_{3456} \otimes v_{567} \in \mcL_{\bar 4}$, we have that
\begin{equation}\label{eq114}
\begin{array}{ll}
J(x,y,z)=&  a_{11}a_{24}(u_{12}\,\llcorner\, \widetilde{u_{3456}} \otimes v_{1234} \,\llcorner\, \widetilde{v_{567}}) \\
& + b_1^{(1)}\phi(v_{56734 }) [u_2,u_{3456}](u_1) \otimes v_{12} + b_1^{(2)}\phi(u_{34562}) u_1 \otimes [v_{34},v_{567}](v_{12})\\
& - b_1^{(1)}\phi(v_{56712})[u_1,u_{3456}](u_2) \otimes v_{34}- b_1^{(2)}\phi(u_{34561})u_{2} \otimes [v_{12},v_{567}](v_{34}).\\
\end{array}
\end{equation}
We want necessary and sufficient conditions on the scalars $a_{11}$, $a_{24}$, $b_1^{(1)}$ and $b_1^{(2)}$ which guarantee that 
$J(\mcL_{\bar1},\mcL_{\bar1},\mcL_{\bar4})=0$. Necessary conditions are easy to obtain by using concrete elements. In fact, 
$$
0=J(e_1 \otimes e_{12},e_2 \otimes e_{34} ,e_{1345}\otimes e_{125} )=\left(a_{11}a_{24} +\frac{1}{5}b_1^{(1)}+\frac{4}{5}b_1^{(2)}\right)e_1\otimes e_{12},
$$
and
$$
0=J(e_1 \otimes e_{12},e_2 \otimes e_{12},e_{1345}\otimes e_{345})=\frac{6}{5}\left(b_1^{(1)}- b_1^{(2)}\right)e_1\otimes e_{12},
$$
which imply that some necessary conditions are
\begin{equation}\label{equ_114}
b_1^{(1)}= b_1^{(2)}=-a_{11}a_{24}.
\end{equation}
Now let us check that these conditions are enough too. Assume we have taken the scalars satisfying Eq.~\eqref{equ_114}. Then Eq.~\eqref{eq114} becomes,
$$
\begin{array}{ll}
 J(x,y,z )=& {b_1^{(1)}} \big( -u_{12}\,\llcorner\, \widetilde{u_{3456}} \otimes v_{1234} \,\llcorner\, \widetilde{v_{567}}  \\
 &+  \phi(v_{56734 }) [u_2,u_{3456}](u_1) \otimes v_{12} +\phi(u_{34562}) u_1 \otimes [v_{34},v_{567}](v_{12})\\
& -  \phi(v_{56712})[u_1,u_{3456}](u_2) \otimes v_{34}-\phi(u_{34561})u_{2} \otimes [v_{12},v_{567}](v_{34})\big).\\
\end{array}
$$
Now we are sure that this expression is zero without the need to  compute the value of the right side in full  detail: simply note that the vanishing of the above expression does not depend on the choice of the nonzero scalar  $ {b_1^{(1)}} $, and recall that the system of equations with variables $a_{ij},b_i^{(1)},b_i^{(2)}\in\FF^\times$ always has at least one solution as in Remark~\ref{re_haysol}. So we can conclude that $J(\mcL_{\bar1},\mcL_{\bar1},\mcL_{\bar4})$ is identically zero.

 From now on, we can assume that the scalars satisfy 
\begin{equation}\label{recopil21}
 a_{11}a_{22}=a_{12} a_{13},\quad
 a_{11}b_2^{(1)}=a_{13}b_1^{(1)},\quad
a_{11}b_2^{(2)}=a_{13}b_1^{(2)},\quad
b_1^{(1)}= b_1^{(2)}=-a_{11}a_{24},
\end{equation}
 all of them necessary conditions for $\mcL$ to be a Lie algebra. 
 Since $a_{11}b_2^{(1)}=a_{13}b_1^{(1)}=a_{13}b_1^{(2)}=a_{11}b_2^{(2)}$ and $a_{11}$ is non zero, so  $ b_2^{(1)}=b_2^{(2)}$. 
 Then, to simplify notation, we denote by 
 $$
 b_1:=b_1^{(1)}=b_1^{(2)},\qquad  b_2:=b_2^{(1)}=b_2^{(2)}.
 $$ \vspace{1pt}
 
\noindent    $ \boxed{ (J_{1,2,2})}$
Now take $x=u_1\otimes v_{12} \in \mcL_{\bar 1}$ and $y=u_{23}\otimes v_{3456}, z=u_{45}\otimes v_{789\,10} \in \mcL_{\bar 2}$.  
Note that Eq.~\eqref{recopil21} tells $(a_{12}a_{13})(b_2a_{11})=(a_{11}a_{22})(b_1a_{13})$, so that  $$a_{12}b_2=a_{22}b_1.$$ This allows to write
$$
\begin{array}{ll}
J(x,y,z) =& a_{12}b_{2}\big(-[u_{45},u_{123}]_1(v_{789\,10},v_{12}\,\llcorner\, \widetilde{v_{3456}}) - (u_{45}, u_{123})[v_{789\,10},v_{12}\,\llcorner\, \widetilde{v_{3456}}]_2\\
& + [u_{23},u_{145}]_1(v_{3456},v_{12}\,\llcorner\, \widetilde{v_{789\,10}}) + (u_{23}, u_{145})[v_{3456},v_{12}\,\llcorner\, \widetilde{v_{789\,10}}]_2\\
& - [u_{1},u_{2345}]_1(v_{12},v_{3456}\,\llcorner\, \widetilde{v_{789\,10}}) - (u_{1},u_{2345})[v_{12},v_{3456}\,\llcorner\, \widetilde{v_{789\,10}}]_2\big).
\end{array} 
$$
We conclude that this expression is always zero due to the fact that being zero or not does not depend on the choice of scalars. That is, we use the same arguments as those ones when checking  the identity $ (J_{1,1,4})$.
\medskip

\noindent     $ \boxed{ (J_{1,3,3})}$
Let $x= u_1\otimes v_{12} \in \mcL_{\bar{1}}$ and  $y=u_{234}\otimes v_3  , \, z=u_{567}\otimes v_4 \in \mcL_{\bar{3}}$. We get that
$$
J(x,y,z) = 
 \Big(a_{13}a_{34}(u_{567}\,\llcorner\, \widetilde{u_{1234}}+u_{234} \,\llcorner\, \widetilde{u_{1567}}) 
 + a_{33}a_{11}(u_{234}\,\llcorner\, \widetilde{u_{567}})\wedge u_1
 \Big) \otimes v_{1234}.
$$
For instance, for $x=e_1 \otimes e_{12} \in \mcL_{\bar{1}}$, $y=e_{234}\otimes e_3$ and $z=e_{125}\otimes e_4 \in \mcL_{\bar{3}}$, it is easy to check that
$$
J(x,y,z) =( a_{13}a_{34} 
 - a_{33}a_{11} )e_{12} \otimes e_{1234},
$$
so the fact $J(x,y,z)=0$ gives 
\begin{equation}\label{recopil22}
a_{13} a_{34}=a_{33}a_{11}.
\end{equation}
This necessary condition on the scalars is of course sufficient for getting $(J_{1,3,3})$, since the above general expression becomes
$$
J(x,y,z) = 
a_{33}a_{11}  \big( u_{567}\,\llcorner\, \widetilde{u_{1234}}+u_{234} \,\llcorner\, \widetilde{u_{1567}}
 + (u_{234}\,\llcorner\, \widetilde{u_{567}})\wedge u_1
 \big) \otimes v_{1234}.
$$
As it has to vanish for some choice of scalars, and $a_{33}a_{11} \ne0$,  the only way is that it vanishes for any choice of the scalars.
\medskip

\noindent
 $ \boxed{ (J_{2,2,4})}$
 A first computation,
 $$
 J(e_{12}\otimes e_{1234},e_{34}\otimes e_{1235},e_{1235}\otimes e_{345} ) =(a_{22}a_{44}-a_{24}a_{12})e_{123}\otimes e_3,
 $$
 gives, as a necessary condition, 
\begin{equation}\label{recopil23}
a_{22}a_{44}=a_{24}a_{12}.
\end{equation}
With this requisite on the scalars, $(J_{2,2,4})$ holds, since for  $x= u_{12}\otimes v_{1234} ,y=u_{34}\otimes v_{5678} \in \mcL_{\bar{2}}$ and  $ z=u_{5678}\otimes v_{9\,10\,11} \in \mcL_{\bar{4}}$,
$$
\begin{array}{ll}
J(x,y,z)=&a_{22}a_{44}\Big(u_{1234} \,\llcorner\, \widetilde{u_{5678}}\otimes (v_{1234} \,\llcorner\, \widetilde{v_{5678}})\,\llcorner\,  \widetilde{v_{9\,10\,11}}\\
&+(u_{34} \,\llcorner\, \widetilde{u_{5678}})\wedge u_{12}\otimes (v_{5678} \,\llcorner\, \widetilde{v_{9\,10\,11}}) \,\llcorner\, \widetilde{v_{1234}} \\
&-(u_{12} \,\llcorner\, \widetilde{u_{5678}})\wedge u_{34}\otimes (v_{1234} \,\llcorner\, \widetilde{v_{9\,10\,11}}) \,\llcorner\, \widetilde{v_{5678}}\Big),  
\end{array}
$$
which is zero independently of the values of $a_{22}$ and $a_{44}$.
\medskip

\noindent
 $ \boxed{ (J_{2,2,3})}$
Let $x=e_{12}\otimes e_{1234} \in \mcL_{\bar{2}}$, $y=e_{34}\otimes e_{1235} \in \mcL_{\bar{2}}$ and $z=e_{125}\otimes e_{4} \in \mcL_{\bar{3}}$. From
$$ 
J(x,y,z) 
 = (a_{22}a_{34}+b_2)e_{12}\otimes e_{1234},
$$
we get 
\begin{equation}\label{recopil24}
a_{22}a_{34}=-b_2.
\end{equation} 
As in the above cases, this condition is sufficient to guarantee $(J_{2,2,3})$.\medskip

Compiling the information of \eqref{recopil21}, \eqref{recopil22}, \eqref{recopil23} and \eqref{recopil24},  we have that all the restrictions on the scalars in \eqref{eq_SOL} are necessary for $\mcL$ to be a Lie algebra, since
$$
\begin{array}{lll}
a_{13}a_{24}a_{12}=a_{24}a_{11}a_{22}=-b_1a_{22}=-a_{12}b_2&\Rightarrow &a_{13}a_{24} =-b_2,\\
-b_1a_{13}=a_{11}a_{24}a_{13}=-b_2a_{11}=a_{22}a_{34} a_{11}=a_{13}a_{34}a_{12}  &\Rightarrow &a_{34}a_{12} =-b_1,\\
-b_2a_{11}=a_{22}a_{34}a_{11}=a_{13}a_{34}a_{12} =a_{33}a_{11}a_{12}  &\Rightarrow & a_{33}a_{12}=-b_2, \\
    a_{22}a_{44}a_{13}= a_{24}a_{12}a_{13}=  a_{22}a_{11}a_{24}=-a_{22}b_1 &\Rightarrow &   a_{44}a_{13}=-b_1.
\end{array}
$$
From now on, we asume that our scalars satisfy all the conditions in \eqref{eq_SOL}. In particular they satisfy all the relations in  \eqref{recopil21}, \eqref{recopil22}, \eqref{recopil23} and \eqref{recopil24}. Then we only have to test that   the remaining identities $(J_{i,j,k})$ hold too.
\medskip

\noindent
 $ \boxed{ (J_{1,2,3})}$
For elements $x=u_1\otimes v_{12} \in \mcL_{\bar 1}$, $y=u_{23}\otimes v_{3456}\in \mcL_{\bar 2}$ and $ z=u_{456}\otimes v_{7}\in \mcL_{\bar 3} $, and taking into account that $a_{12}a_{33}=-b_2=a_{13}a_{24}$, we obtain that
$$
\begin{array}{ll}
J(x,y,z) =&  b_2\Big(
[u_{23},u_{456}] (u_1) \otimes (v_{3456},v_7) v_{12} + (u_{23},u_{456}) u_1 \otimes [v_{3456},v_7](v_{12}) \\
&\quad - u_{123}\,\llcorner\, \widetilde{u_{456}} \otimes (v_{12} \,\llcorner\, \widetilde{v_{3456}}) \wedge v_7 
-  u_{23 } \,\llcorner\, \widetilde{u_{1456}} \otimes v_{3456} \,\llcorner\, \widetilde{v_{127}} \Big);
\end{array}
$$
which is necessarily  zero as in the above cases.
\medskip
 
\noindent
 $ \boxed{ (J_{1,2,4})}$
Consider now $x=u_1\otimes v_{12} \in \mcL_{\bar 1}$, $y=u_{23}\otimes v_{3456}\in \mcL_{\bar 2}$ and $ z=u_{4567}\otimes v_{789}\in \mcL_{\bar 4} $ and take into account $ a_{12}a_{34}= a_{24}a_{11}=-b_1 $ to get
$$
\begin{array}{ll}
J(x,y,z) =
&  -b_1\Big(u_{123}\,\llcorner\, \widetilde{u_{4567}} \otimes (v_{12}\,\llcorner\, \widetilde{v_{3456}})\wedge v_{789}
 +(u_{23}\ll{u_{4567}})\wedge u_1 \otimes (v_{3456} \ll{v_{789}})\wedge v_{12} \\
&\qquad+ [u_1,u_{4567}](u_{23})\otimes  (v_{12}, v_{789})v_{3456 } + (u_1,u_{4567})u_{23}\otimes [v_{12}, v_{789}] (v_{3456})\Big).
\end{array}$$
 Arguments as in the previous cases permit us to conclude that it vanishes.

\medskip
\noindent
 $ \boxed{ (J_{1,3,4})}$ Recall that    $a_{13}a_{44}=a_{34}a_{12}=-b_1$, 
 so that, for
  $x=u_1\otimes v_{12} \in \mcL_{\bar{1}}$, $y=u_{234}\otimes v_3 \in \mcL_{\bar{3}}$ and $z=u_{5678}\otimes v_{456} \in \mcL_{\bar{4}}$, we have
$$
\begin{array}{ll}
J(x,y,z) =&b_1\Big( u_1 \wedge(u_{234}\,\llcorner\,\widetilde{u_{5678}}) \otimes v_{12}\,\llcorner\, \widetilde{v_{3456}}- u_{1234}\,\llcorner\, \widetilde{u_{5678}} \otimes v_{123} \,\llcorner\, \widetilde{v_{456}} \\
&\quad- [u_1,u_{5678}](u_{234})\otimes \phi(v_{12 456}) v_3 - \phi(u_{15678})u_{234}\otimes [v_{12},v_{456}](v_3)\Big),
\end{array}$$
which is identically zero.

\medskip

Clearly, we do not have  to compute all the expressions to be completely sure that the rest of the identities   hold too:
\begin{itemize}
\item $(J_{1,4,4})$ holds since,  for any $x\in\mcL_{\bar 1}$, $y,z\in\mcL_{\bar 4}$,
we can write $\frac{J(x,y,z)}{b_1}$ without using any scalar, taking into account that $b_1=-a_{44}a_{13}$.
\item Similarly $\frac{J(x,y,z)}{b_2}$ is independent of the choice of scalars for any $x\in\mcL_{\bar 2}$, $y,z\in\mcL_{\bar 3}$, since $b_2=-a_{33}a_{12}$.  That is, we get that $(J_{2,3,3})$ is true.
\item $(J_{2,3,4})$ holds, since $b_2=-a_{24}a_{13}=-a_{22}a_{34}$.
\item $   (J_{2,4,4})$ follows from  $a_{24}b_1=a_{44}b_2$.
 \item $   (J_{3,3,4})$ follows from  $a_{33}b_1=a_{34}b_2$.
\item The identity  $   (J_{3,4,4})$ is consequence from  $a_{34}a_{24}=a_{44}a_{33}$.
\end{itemize}
All this implies that $\mcL$ is a Lie algebra under our assumptions in \eqref{eq_SOL}.   
 In fact, it is very easy to find   concrete solutions of the system \eqref{eq_SOL}, for instance, the one provided in \eqref{eq_unasol}.

The following step is to check that   the obtained Lie algebras are mutually isomorphic. To do that, take $\mcL$ and $\mcL'$  the $\ZZ_5$-graded vector spaces as in \eqref{model_2A4} endowed with the Lie bracket as in \eqref{eq_pr2A4} with the coefficients given by  \eqref{eq_SOL} and 
 \eqref{eq_unasol} respectively. Let $ f\colon\mcL\to\mcL'$ be the map  given by $f(x)=\alpha_i x$, for any $x\in\mcL_{\bar i}$, for any choice of  scalars $\{\alpha_i:i=0,\dots,4\}\subset\FF^\times$ such that
\begin{equation}\label{eq_simplicidad}
\alpha_0=1,\quad
(\alpha_1)^5=  -b_1a_{11}a_{12}a_{13},\quad
\alpha_2=-\frac{b_1a_{12}a_{13} }{\alpha_1^3},\quad
\alpha_3=  \frac{-b_1a_{13}}{\alpha_1^2} ,\quad
\alpha_4=-\frac{b_1}{\alpha_1}.
 \end{equation}
An straightforward computation shows that
 $$
 \frac{\alpha_i\alpha_j}{\alpha_{[i+j]_5}}=a_{ij},\quad   {\alpha_1\alpha_4}=-b_1, \quad   {\alpha_2\alpha_3}=-b_2, 
 $$
 for any $0\le i\le j\le 4$ such that $(i,j)\in\{ (1,1),(2,2),(3,3),(4,4),(1,2),(1,3),(2,4),(3,4) \}$, where    $b_i$ is used for both $ b_i^{(1)}$ and $b_i^{(2)}$.
 Thus $f$ is an isomorphism between $\mcL$ and $\mcL'$. 
 Recall from Remark~\ref{re_haysol} that  there is some solution of the system of quadratic equations  \eqref{eq_SOL} such that the corresponding Lie algebra $\mcL$ is the split algebra      $\e_8$.
Hence all the obtained Lie algebras  are  isomorphic to $\e_8$ and so, in particular, they are simple. 
\end{proof}

\begin{remark}\label{re_simplicidad}
The   construction in Proposition~\ref{teo_Z5model} provides simple Lie algebras of type $E_8$ even when the field is not algebraically closed, although in such case  the obtained Lie algebras are not necessarily isomorphic. More precisely, we cannot take the scalars as in \eqref{eq_simplicidad} 
if the field does not contain fifth roots of any element. We can take fifth roots in  the real field, so that only one of the three real forms of the complex Lie algebra of type $E_8$ can be obtained by Proposition~\ref{teo_Z5model}  (the split one, of signature of its Killing form equal to 8, taking into account Section~\ref{se_gradings}). For an arbitrary field, the simplicity can be argued as follows. If   an ideal $I$  of $\mcL$ possesses a nonzero homogeneous element in $ \mcL_{\bar i}$, for some $i\ne0$, since $\mcL_{\bar i}$ is an irreducible $\mcL_{\bar 0}$-module, then $\mcL_{\bar i}\subset I$ and $I=\mcL$. 
 If  $\mcL$ possesses a nonzero   element in $ \mcL_{\bar 0}$, at least one of the copies of $\slf(V)$ is contained in $I$, and multiplying for instance with 
 $ \mcL_{\bar 1}$ we get $ \mcL_{\bar 1}\subset I$ and again $I$ is the whole algebra $\mcL$. Finally, aiming for a contradiction, suppose that  certain nonzero ideal $I$  contains  no  nonzero homogeneous element. Let us check first that any nonzero element  $x=\sum  x_i\in I$ ($x_i\in\mcL_{\bar i}$), gives another nonzero element $x'=\sum_{i\ne{  0}} x'_i\in I$ such that $x_i'=0$ for all the indices $i$ such that $x_i=0$. In fact,  as each copy of $\slf(V)$ acts faithfully in $ \mcL_{\bar i}$ if $i\ne{  0}$ but centralizes the other copy of  $\slf(V)$; then we multiply $x$ first with one of the copies of  $\slf(V)$ and afterwards with the other copy to get the required nonzero element $x'$ without term in $ \mcL_{\bar 0}$. Next, we multiply $x'$ with $ \mcL_{\bar 1}$ to get a nonzero element in $[\sum_{i\ne{\bar 0}}\mcL_{\bar i},\mcL_{\bar 1}]\subset \mcL_{\bar 2}+\mcL_{\bar 3}+\mcL_{\bar 4}+\mcL_{\bar 0}$ and at the same time in $I$. By repeating the above procedure to eliminate the ${\bar 0}$-part, we obtain a nonzero element in $(\mcL_{\bar 2}+\mcL_{\bar 3}+\mcL_{\bar 4})\cap I$. 
 Again we multiply such element with $\mcL_{\bar 1}$ to get a nonzero element in $ (\mcL_{\bar 3}+\mcL_{\bar 4}+\mcL_{\bar 0})\cap I$, and afterwards we eliminate its part in $\mcL_{\bar 0}$. At some point we will obtain  a nonzero homogeneous element in $I$, getting the desired contradiction.
\end{remark}

\begin{remark}\label{re_Z5cubo}
It is well known the existence of a $\ZZ_p^3$-graded Lie algebra with zero neutral homogeneous component and all the remaining $p^3-1$ homogeneous components of dimension 2 (pieces of Cartan subalgebras), for $p=2,3,5$ and the obtained Lie algebra $\mathfrak{g}_2$, $\mathfrak{f}_4$ and $\mathfrak{e}_8$ respectively (dimensions $2(p^3-1)$). 
These are examples of the Jordan gradings described in \cite{Alb_Jordan gradings} produced by the Jordan subgroups in \cite{Alekseevski-hno} (see also \cite[3.8]{EMSIII}).
The related highly symmetric model of $\mathfrak{g}_2$ as a twisted ring group based on this $\ZZ_2^3$-grading has been very recently obtained in \cite{twistedg2}.
The challenge of getting a  similar construction of $\e_8$ by doubling and twisting the group $\ZZ_5^3$ could have as a convenient starting point the concrete products in Proposition~\ref{teo_Z5model}, as explained in \cite[Theorem~2.1]{Alb_Jordan gradings}.
\end{remark}
 

 \subsection{Model based on the subalgebra of type $A_1\oplus A_7$}\label{se_A1A7}

 Recall from Section~\ref{se_gradings} the existence of $U$ and $W$ vector spaces over $\FF$ with $\dim U=2$ and $\dim W=8$ such that the split Lie algebra 
 $\mcL=\e_8=\sum_{\bar i\in\ZZ_4}\mcL_i$ is
   $\ZZ_4$-graded  
 with $\mcL_{\bar 0}\cong\slf(U)\oplus\slf(W)$ 
and   
$$
\textstyle
\mcL_{\bar 1}\cong U\otimes \bigwedge^2W,\quad
\mcL_{\bar 2}\cong \FF\otimes \bigwedge^4W,\quad
\mcL_{\bar 3}\cong U\otimes \bigwedge^6W,
$$
isomorphisms of  $\mcL_{\bar 0}$-modules.
According to Theorem~\ref{teo_main}, we can recover the products of $\e_8$ from these pieces since the brackets 
$[\mcL_{\bar i},\mcL_{\bar j}]\subset \mcL_{\bar i+\bar j}$ are determined up to scalars. 
We use again the invariant products described in Section~\ref{se_invariantproducts}, to be precise, 
$*$ in Eq.~\eqref{eq_starproduct},
$(\ ,\ )$ in Eq.~\eqref{eq_laforma} and $[\ ,\ ]$ in Eq.~\eqref{eq_dualizando}.
In Remark~\ref{re_dim2}, these three products are expressed for the vector space $U$ in   terms of an alternating form $\varphi\colon U\times U\to \FF$.

\begin{proposition} \label{teo_Z4model}

Let $\mcL$ be the  $\ZZ_4$-graded  vector space given, for any $i=1,2,3$,  by
\begin{equation}\label{ecumodel_Z4}
\begin{array}{ll}
\mcL_{\bar 0}&=\slf(U)\oplus\slf(W)\\
\mcL_{\bar i}&=\bigwedge^{[i]_2}U\otimes\bigwedge^{[2i]_8}W\qquad
\end{array}
\end{equation}
 where $U$ and $W$  denote vector spaces over $\FF$ of dimensions $2$ and $8$, respectively. Consider the product of $\mcL_{\bar 0}$ with $\mcL_{\bar i}$ given by the action of the $j$th ideal of $\mcL_{\bar0}$ on the $j$th slot ($j=1,2$). Consider the   product of $\mcL_{\bar i}$ with $\mcL_{\bar j}$, for   $1\le i\le j\le 3$, given by
\begin{equation}\label{eq_prA7}
[x_1\otimes x_2,y_1\otimes y_2]=\left\{
\begin{array}{ll}
a_{ij}\,x_1*y_1\otimes x_2*y_2& i+j\ne4, \\
b_1^{(1)}[x_1,y_1](x_2,y_2)+b_1^{(2)}(x_1,y_1)[x_2,y_2]\qquad&(i,j)=(1,3),\\
b_2^{(2)}(x_1,y_1)[x_2,y_2]\qquad& (i,j)=(2,2),
\end{array}\right.
\end{equation}
for    
  any  $x_1\in\bigwedge^{[i]_2} U$, $x_2\in\bigwedge^{[2i]_8}W$, $y_1\in\bigwedge^{[j]_2} U$ and $y_2\in\bigwedge^{[2j]_8}W$.
 
 \noindent Extend the bracket to $\mcL$ making that the product of $\mcL_{\bar i}$ with $\mcL_{\bar j}$ if $i>j$ is skew-symmetric, and making $(\mcL_{\bar 0},[\ ,\ ])$ a subalgebra.
Assume that all the scalars are nonzero.
Then, $\mcL$ endowed with the  product $[\ ,\ ]$   is a Lie algebra if and only if 
\begin{equation}\label{caso2_0}
\begin{array}{l}
b_1^{(1)}=b_1^{(2)}=-a_{11}a_{23}=-a_{12}a_{33}, \\
b_2^{(2)}=-a_{12}a_{23}.
\end{array}
\end{equation}

Moreover, $(\mcL, [\ ,\ ])$ is a simple exceptional Lie algebra isomorphic to $\e_8$. 
A solution of this system of quadratic equations that, in particular, gives a model of $\e_8$, is
\begin{equation}\label{solcaso2}
\begin{array}{rl}
1&=a_{11}=a_{12}=a_{23}=a_{33},\\
-1&=b_{2}^{(2)}=b_{1}^{(1)}=b_{1}^{(2)}.
\end{array}
\end{equation}
\end{proposition}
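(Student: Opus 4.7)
The plan is to mirror the proof of Proposition~\ref{teo_Z5model} (which handles the analogous $\ZZ_5$-graded situation) step by step. First I would verify that the skew-symmetric extension is well-defined on the diagonal: for $(\mcL_{\bar 2},\mcL_{\bar 2})$ the scalar pairing $(x_1,y_1)$ on $\FF$ is symmetric, while on $\bigwedge^4 W$ one has $[x_2,y_2]=-[y_2,x_2]$ because both $i$ and $n-i$ equal $4$; for the pairs $(\mcL_{\bar 1},\mcL_{\bar 1})$ and $(\mcL_{\bar 3},\mcL_{\bar 3})$, Lemma~\ref{le_notacionesyproductos}(d) together with Remark~\ref{re_dim2} gives $x_1*y_1=-y_1*x_1$ on $U$, while $x_2*y_2$ is symmetric on $\bigwedge^2 W$ (resp.\ $\bigwedge^6 W$), so the tensor bracket is overall skew. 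By Remark~\ref{remark:slv}, it then suffices to check $J(\mm,\mm,\mm)=0$ for $\mm=\mcL_{\bar 1}\oplus\mcL_{\bar 2}\oplus\mcL_{\bar 3}$, and trilinearity reduces the problem to the ten ordered triples $(i,j,k)$ with $1\le i\le j\le k\le 3$.

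The core strategy for each triple is the two-pronged argument used for Proposition~\ref{teo_Z5model}. On the one hand, the diagonal triples $(1,1,1),(2,2,2),(3,3,3)$ and many of the mixed ones admit a scalar-independence argument: one factors the unique natural product of coefficients out of $J(x,y,z)$, obtaining a universal expression involving only $*,\,\llcorner\,,\,\widetilde{\phantom{x}}\,$ and the symplectic form on $U$, which must be identically zero because, by Remark~\ref{re_haysol}, some non-zero choice of scalars does yield a Lie algebra. On the other hand, the genuinely informative triples are those in which the bracket lands in the two separate ideals of $\mcL_{\bar 0}=\slf(U)\oplus\slf(W)$, namely $(J_{1,1,2})$, $(J_{1,1,3})$, $(J_{1,3,3})$, $(J_{1,2,2})$ and $(J_{2,2,3})$. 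For each of these I would evaluate $J$ on explicit tensor products of basis elements $u_i,v_j$ of $U$ and $W$, projecting the $\mcL_{\bar 0}$-part of $J(x,y,z)$ to each simple ideal via the trace pairing against generic $f\in\slf(U)$ and $g\in\slf(W)$ as in \eqref{ecu:jacobiano1}--\eqref{ecu:jacobiano2}. This should extract $b_1^{(1)}=b_1^{(2)}=-a_{11}a_{23}$ from $(J_{1,1,3})$, $b_1=-a_{12}a_{33}$ from $(J_{1,3,3})$, and $b_2^{(2)}=-a_{12}a_{23}$ from $(J_{1,2,2})$ or $(J_{2,2,3})$, which is exactly \eqref{caso2_0}; a brief check then shows $(J_{1,1,2})$ and $(J_{1,2,3})$ follow automatically. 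Once \eqref{caso2_0} holds, every remaining Jacobian collapses to a universal identity by the same scalar-independence reasoning.

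For simplicity and identification with $\e_8$, I would replay the rescaling argument: given any two solutions of \eqref{caso2_0}, define $f|_{\mcL_{\bar i}}=\alpha_i\,\id$ with $\alpha_0=1$ and choose $\alpha_i\in\FF^\times$ matching the ratios of $a_{ij}$ and of $b_1,b_2^{(2)}$; the resulting system reduces to extracting square roots, which is solvable over an algebraically closed $\FF$. This yields an isomorphism with the canonical model determined by \eqref{solcaso2}, which in turn is $\e_8$ by the dimension count $3+63+56+70+56=248$ and Remark~\ref{re_haysol}. The main technical hurdle will be the bookkeeping in $(J_{1,1,3})$: after opening the triple Jacobi, one must cleanly split the $\mcL_{\bar 0}$-term into its $\slf(U)$ and $\slf(W)$ projections and, for each, collapse the resulting trace identity to a single scalar multiple of $\tr(f\circ[\text{basis},\text{basis}])$ via $f\cdot(w_1\wedge w_2)=f(w_1)\wedge w_2+w_1\wedge f(w_2)$. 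It is there that the coupling $b_1^{(1)}=b_1^{(2)}$ is forced, reflecting how the grading links the two simple ideals of $\mcL_{\bar 0}$.
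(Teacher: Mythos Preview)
Your overall plan is the paper's plan, and it would go through, but two concrete slips need correcting. First, among the five ``informative'' triples you single out, only $(J_{1,1,2})$ actually lands in $\mcL_{\bar 0}$ (because $1+1+2\equiv 0\pmod 4$); the Jacobians $(J_{1,1,3})$, $(J_{1,2,2})$, $(J_{1,3,3})$, $(J_{2,2,3})$ live in $\mcL_{\bar 1}$ or $\mcL_{\bar 3}$, so the trace-pairing projection against generic $f\in\slf(U)$ and $g\in\slf(W)$ in the style of \eqref{ecu:jacobiano1}--\eqref{ecu:jacobiano2} is available only for $(J_{1,1,2})$. (You are transporting the $\ZZ_5$ bookkeeping too literally: there $1+1+3\equiv 0$, here it does not.) For the others you must instead evaluate $J$ on explicit basis tensors and read off the vanishing in $U\otimes\bigwedge^{2}W$ or $U\otimes\bigwedge^{6}W$. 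Relatedly, a single basis evaluation in $(J_{1,1,3})$ only yields $a_{11}a_{23}=-\tfrac12\bigl(b_1^{(1)}+b_1^{(2)}\bigr)$; in the paper the equality $b_1^{(1)}=b_1^{(2)}$ is obtained by combining this with the relations coming from $(J_{1,1,2})$, $(J_{1,2,2})$ and $(J_{1,2,3})$, the last of which you propose to skip. You can still reach \eqref{caso2_0} via $(J_{1,3,3})$ instead of $(J_{1,2,3})$, but you will need at least two independent basis evaluations (e.g.\ one with $(v,\omega)=(w,\omega)=0$ in $\bigwedge^{2}W\times\bigwedge^{6}W$ to isolate $b_1^{(2)}$) rather than the single projection you sketch.

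Second, the rescaling isomorphism at the end requires $\alpha_1^{4}=-b_2^{(2)}a_{11}^{2}$ (with $\alpha_2,\alpha_3$ then determined), so you need a fourth root, not a square root; over your algebraically closed $\FF$ this is harmless, but the statement should be fixed.
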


 \begin{proof}
 Again it makes sense to deal with the skew-symmetric extension of the bracket because, for $i=j$ it is true that $[x_1\otimes x_2,y_1\otimes y_2]=-[y_1\otimes y_2,x_1\otimes x_2]$. In fact,  if $i=1,3$, $x_1,y_1\in  U$, $x_2,y_2\in\bigwedge^{[2i]_8}W$, and Lemma~\ref{le_notacionesyproductos} says that $x_1*y_1=-y_1*x_1$ while $x_2*y_2=y_2*x_2$. For $i=2$, $x_1,y_1\in  \FF$ so that $(x_1,y_1)=x_1y_1=(y_1,x_1)$. In this case $x_2,y_2\in\bigwedge^{4}W$, and $[x_2,y_2]=-[y_2,x_2]$   since $(\ ,\ )\colon \bigwedge^{4}W\times \bigwedge^{4}W\to\FF$ is symmetric.
 
 Similarly to the proof of Proposition~\ref{teo_Z5model}, we have to prove the identity $ (J_{i,j,k})$: 
 $$
 J(\mcL_{\bar i},\mcL_{\bar j},\mcL_{\bar k})=0 \textrm{ for } 0\le i,j,k\le 3.
 $$
  Again we can assume that $1\le i,j,k\le 3$
    by Remark~\ref{remark:slv}, we can also assume $i\le j\le k$ by trilinearity, and the cases $i=j=k$ are consequence of  
 Remark~\ref{re_haysol} adapted to our setting: the veracity of  $ (J_{i,i,i})$ does not depend on the scalars and  the system of equations with variables $a_{ij},b_i^{(1)},b_i^{(2)}\in\FF^\times$ always has at least one solution.
 
 For checking the identities, we can restrict the considered elements in $\bigwedge^{[2l]_8}W$ to those which are 
   wedge products of elements in $W$. Take  $\{ v_i:i\in\mathbb N\}$ arbitrary elements in $W$, and use  the notation $v_{i_1\dots i_k}$ for $v_{i_{1}}\wedge\dots\wedge v_{i_{k}}$ as in Lemma~\ref{le_notacionesyproductos} (and similarly for $u_i$'s, elements in $U$). Also fix  $\{e_i:i=1,\dots,8\}$ a basis of $W$ and $\{f_1,f_2\}$ a basis of $U$. There is no ambiguity with the notation $f+ g$ in $\mcL_{\bar0}$,  for $f\in\mathfrak{sl}(U)$ and in $g\in\mathfrak{sl}(W)$. (We will not use subindices $f_1+g_2$ in this case.)
\medskip

 \noindent
 $ \boxed{ (J_{1,1,2})}$ Take $x=u_1\otimes v_{12},y=u_2\otimes v_{34}\in\mcL_{\bar1}$ and $z=1\otimes v_{5678}\in\mcL_{\bar2}$. Compute
$$
 \begin{array}{ll}
J(x,y,z) =&   -a_{11}b_2^{(2)}\phi(u_{12})[v_{1234},v_{5678}]\\
& -a_{12}b_1^{(1)}\phi(v_{1\dots8})[u_1,u_2]+a_{12}b_1^{(2)}\phi(u_{12})[v_{12},v_{345678}]\\
& +a_{12} b_1^{(1)}\phi(v_{1\dots8})[u_2,u_1]+a_{12} b_1^{(2)}\phi(u_{12})[v_{34},v_{125678}].
\end{array} 
$$
 
 As $[u_1,u_2]=[u_2,u_1]$; then
 \begin{equation}\label{eq_caso112}
J(x,y,z) = \phi(u_{12})\big(  -a_{11}b_2^{(2)}[v_{1234},v_{5678}] 
   +a_{12}b_1^{(2)} ( [v_{12},v_{345678}]
+[v_{34},v_{125678}])\big).
\end{equation}

 If we choose concrete elements, for instance,  $x= f_1 \otimes e_{12}$, $y=f_2 \otimes e_{34}$ and $z=1\otimes e_{5678}$,  
  we use Lemma~\ref{le_notacionesyproductos} (b) to get
 $$
 [e_{12},e_{345678}]+[e_{34},e_{125678}]=[e_{1234},e_{5678}],
 $$
  so that
 Eq.~\eqref{eq_caso112} becomes
$$
J(x,y,z) =\big(  -a_{11}b_2^{(2)}+a_{12}b_1^{(2)}\big)[e_{1234},e_{5678}] ,
$$
which gives a necessary condition to have  the identity $(J_{1,1,2})$:
\begin{equation}\label{caso2_1}
a_{11}b_2^{(2)}=a_{12}b_1^{(2)}.
\end{equation} 
Note that Eq.~\eqref{caso2_1} is also sufficient to get $(J_{1,1,2})$, since this condition placed on  Eq.~\eqref{eq_caso112} gives
$$
J(x,y,z) =  a_{11}b_2^{(2)}\phi(u_{12})\big( 
    [v_{12},v_{345678}]
+[v_{34},v_{125678}]  -[v_{1234},v_{5678}]  \big), 
$$
so that to be zero (for all $x,y \in\mcL_{\bar1}$ and all $z\in\mcL_{\bar2}$) or not does not depend on the choice of the scalars. That is, as so many times in the proof of Proposition~\ref{teo_Z5model},  knowing the existence of solutions  has made it possible for us  to avoid the direct verification of 
$$
 [v_{12},v_{345678}]+[v_{34},v_{125678}]=[v_{1234},v_{5678}],
 $$
 having   had to check
   it only for one concrete basic element. (The direct verification is not difficult at all, it is enough to use \eqref{eq_dualizando} and the concrete action of $\slf(W)$ on $\bigwedge^rW$, but it is very convenient   for us to mechanize and shorten  the arguments, realizing that 
 we can often omit the  computation with arbitrary elements.)
 \medskip

 \noindent
 $ \boxed{ (J_{1,2,2})}$
 For $x=u\otimes x_2\in\mcL_{\bar1}$ and $y=1\otimes y_2,z=1\otimes z_2\in\mcL_{\bar2}$,  
   \begin{equation}\label{eq_caso122}
 J(x,y,z) =u\otimes\big(b_2^{(2)}[y_2,z_2](x_2)+a_{12}a_{23}\big(y_2\,\llcorner\, \widetilde{x_2\wedge z_2}-z_2\,\llcorner\, \widetilde{x_2\wedge y_2}
 \big)
 \big).
\end{equation}
 This is easy to compute using concrete elements, taking into account Lemma~\ref{le_notacionesyproductos}. 
For instance,
$$
J(u\otimes e_{12},1\otimes e_{3456},1\otimes e_{1278}) = (-a_{12}a_{23}-b_2^{(2)})u \otimes e_{12}.
 $$

 \noindent 
 Hence, we get another necessary condition
\begin{equation}\label{caso2_3}
a_{12}a_{23}=-b_2^{(2)}.
\end{equation} 
This is sufficient too, since under this restriction on the scalars, Eq.~\eqref{eq_caso122} becomes 
$$
J(x,y,z) =b_2^{(2)} u\otimes\big([y_2,z_2](x_2)+  y_2\,\llcorner\, \widetilde{x_2\wedge z_2}-z_2\,\llcorner\, \widetilde{x_2\wedge y_2}
 \big),
$$
and now the identity $(J_{1,2,2})$ is independent of the choice of the scalars, and, as above,  it has to be  always true. \smallskip

\noindent
 $ \boxed{(J_{1,2,3}), (J_{1,1,3})}$ As the identity $ (J_{1,2,3})$ a priori involves $a_{12}a_{33}$, $a_{23}a_{11}$, $b_{1}^{(1)}$ and $b_{1}^{(2)}$,
we focus first on concrete elements. For instance, 
$$
J(f_1\otimes e_{12},1\otimes e_{3456},f_2\otimes e_{345678}) = (-a_{12}a_{33}-b_1^{(2)})1 \otimes e_{3456}.
 $$

 \noindent 
 This implies that a necessary condition to get $ (J_{1,2,3})$ is
\begin{equation}\label{caso2_4}
a_{12}a_{33}=-b_1^{(2)}.
\end{equation} 
Also,
$
J(f_1\otimes e_{12},f_{2}\otimes e_{34},f_1 \otimes e_{125678}) = \big(-a_{23}a_{11}-\frac{1}{2}(b_1^{(1)}+b_1^{(2)})\big)f_1 \otimes e_{12};
 $
so that 
\begin{equation}\label{caso2_2}
a_{11}a_{23}=-\frac{1}{2}(b_1^{(1)}+b_1^{(2)})
\end{equation} 
is necessary to get $ (J_{1,1,3})$. In particular, Eqs.~\eqref{caso2_1}, \eqref{caso2_3}, \eqref{caso2_4} and \eqref{caso2_2} are all necessary for $\mcL$ to be a Lie algebra. Let us check that these four equations imply Eq.~\eqref{caso2_0}.
First, from Eq.~\eqref{caso2_2} we have that $a_{11}a_{23}a_{12}=-\frac{1}{2}(b_1^{(1)}+b_1^{(2)})a_{12}$ and from Eq.~\eqref{caso2_3} we get that $-a_{11}b_2^{(2)}=-\frac{1}{2}(b_1^{(1)}+b_1^{(2)})a_{12}$. Now, applying Eq.~\eqref{caso2_1} we obtain $b_1^{(1)}=b_1^{(2)}$. We rewrite \eqref{caso2_2} as 
$a_{11}a_{23}=-b_1^{(1)}$ and then we have all the conditions required in Eq.~\eqref{caso2_0}. 

For the converse, assume that the scalars satisfy Eq.~\eqref{caso2_0} and let us check that $\mcL$ satisfies the Jacobi identity. This gives immediately \eqref{caso2_3}, \eqref{caso2_4} and \eqref{caso2_2},
and, as $a_{11}a_{23} b_2^{(2)}=-b_1^{(2)}b_2^{(2)}=b_1^{(2)}a_{12}a_{23}$, we have \eqref{caso2_1} too. This guarantees $(J_{1,1,2})$ and $(J_{1,2,2})$, at the moment. But also $(J_{1,2,3})$ holds, since $\frac{J(x,y,z)}{b_1^{(2)}}$ can be written without involving any of the variables, for any $x\in\mcL_{\bar1}$, $y\in\mcL_{\bar2}$ and $z\in\mcL_{\bar3}$. In a similar way, $ (J_{1,1,3})$ is true, since it only involves $a_{11}a_{23}$,   $b_{1}^{(1)}$ and $b_{1}^{(2)}$, which are all related.
The rest of the identities hold:
\begin{itemize}
    \item $(J_{1,3,3})$ follows from $a_{33}a_{12}=-b_1^{(1)}=-b_1^{(2)}$;
    \item $(J_{2,2,3})$ follows from $a_{23}a_{12}=-b_2^{(2)}$;
    \item $(J_{2,3,3})$ holds, since $a_{23}b_1^{(2)}=a_{23}b_1^{(1)}=b_2^{(2)}a_{33}$;
\end{itemize}

\noindent which finish the proof that $\mcL$ is a Lie algebra. 
Moreover, it is   evident that  the choice of scalars  in \eqref{solcaso2},
$$
1=a_{11}=a_{12}=a_{23}=a_{33},\qquad
-1=b_{2}^{(2)}=b_{1}^{(1)}=b_{1}^{(2)},
$$
provides a concrete solution of the system   of equations \eqref{caso2_0}.

Finally we prove that all the   Lie algebras obtained for different solutions of \eqref{caso2_0} are isomorphic. To do that, take $\mcL$ and $\mcL'$  the $\ZZ_4$-graded vector spaces as in \eqref{ecumodel_Z4} endowed with the Lie bracket as in \eqref{eq_prA7} with the coefficients given by  \eqref{caso2_0} and 
 \eqref{solcaso2} respectively. Let $ f\colon\mcL\to\mcL'$ be the map  given by $f(x)=\alpha_i x$, for any $x\in\mcL_{\bar i}$, where $\{\alpha_i:i=0,\dots,3\}\subset\FF^\times$ are chosen such that
\begin{equation}\label{eq_simplicidad1}
\alpha_0=1,\quad
\alpha _1^4=-b_2 a_{11}^2\, ,\quad
\alpha_2=\frac{-b_1a_{12}}{\alpha_1^2}\,,\quad
\alpha_3=\frac{-b_1 }{\alpha_1}.
 \end{equation}
An straightforward computation shows that
 $$
 \frac{\alpha_i\alpha_j}{\alpha_{[i+j]_4}}=a_{ij},\quad   {\alpha_1\alpha_3}=-b_1, \quad   {\alpha_2\alpha_2}=-b_2, 
 $$
 for any $0\le i\le j\le 3$ such that $(i,j)\in\{ (1,1),(1,2),(2,3), (3,3) \}$, where    $b_i$ is used for both $ b_i^{(1)}$ and $b_i^{(2)}$.
  Thus $f$ is an isomorphism between $\mcL$ and $\mcL'$. 
  Since one of the solutions of the system  \eqref{caso2_0}  makes the corresponding algebra Lie algebra $\mcL$ isomorphic to  $\e_8$, then
 all the obtained Lie algebras are simple and all of them are  isomorphic to $\e_8$. 
\end{proof}
 
 \begin{remark}
 The simplicity for arbitrary fields could have been argued  as  in Remark~\ref{re_simplicidad}, but again the uniqueness is no longer true. 
 This time, not even in the real field: 
 the arguments in \eqref{eq_simplicidad1} do not work because of the nonexistence of fourth roots in $\mathbb{R}$, which seems to mean that the Lie algebras obtained for different 
 values in  \eqref{caso2_0} may not be isomorphic  and that we could obtain both the real forms related to signatures $8$ and $-24$ for different solutions of \eqref{caso2_0}. Further calculations could be   done to be a bit more precise here.
 \end{remark}


 \subsection{Model based on the subalgebra of type $A_1\oplus A_2\oplus A_5$}\label{seZ6}

   In this case,   Section~\ref{se_gradings} implies the existence of  $U$, $V$ and $W$ vector spaces over $\FF$ with $\dim U=2$, $\dim V=3$  and $\dim W=6$ such that the Lie algebra 
 $\mcL=\e_8=\sum_{\bar i\in\ZZ_6}\mcL_{\bar i}$ is
   $\ZZ_6$-graded  
 with $\mcL_{\bar 0}\cong\slf(U)\oplus\slf(V)\oplus\slf(W)$ 
and   
$$
\begin{array}{c}
\textstyle 
\mcL_{\bar 1}\cong U\otimes V \otimes  W,\quad
\mcL_{\bar 2}\cong \FF\otimes \bigwedge^2V\otimes \bigwedge^2W,\quad
\mcL_{\bar 3}\cong U\otimes \FF \otimes \bigwedge^3W,\vspace{4pt}
\\
\mcL_{\bar 4} \cong \FF\otimes V \otimes  \bigwedge^4W,\quad
\mcL_{\bar 5}\cong U\otimes \bigwedge^2V\otimes \bigwedge^5W,
\end{array}
$$
where $\cong$ means isomorphisms as  $\mcL_{\bar 0}$-modules.
According to Theorem~\ref{teo_main},   the Lie algebra $\e_8$ can be recovered from these pieces since the brackets 
$[\mcL_{\bar i},\mcL_{\bar j}]\subset \mcL_{\bar i+\bar j}$ are determined up to scalars. 
We again make use of   
$*$ in Eq.~\eqref{eq_starproduct},
$(\ ,\ )$ in Eq.~\eqref{eq_laforma} and $[\ ,\ ]$ in Eq.~\eqref{eq_dualizando}.

\begin{proposition} \label{teo_Z6model}
Let $U$, $V$ and $W$ be vector spaces over $\FF$ of dimension $2$, $3$ and $6$ respectively. 
Let $\mcL$ be the  $\ZZ_6$-graded  vector space given by 
\begin{equation}\label{model_Z6}
\begin{array}{ll}
\mcL_{\bar 0}&=\slf(U)\oplus\slf(V)\oplus\slf(W),\\
\mcL_{\bar i}&=\bigwedge^{[i]_2}U\otimes\bigwedge^{[i]_3}V \otimes \bigwedge^{[i]_6}W,\qquad
\end{array}
\end{equation}
 for any $i=1,\dots,5$. Consider the product of $\mcL_{\bar 0}$ with $\mcL_{\bar i}$ given by the action of the $j$th ideal of $\mcL_{\bar0}$ on the $j$th slot ($j=1,2,3$). 
 Fix some nonzero scalars $a_{ij},b_k^{(l)} \in\FF^\times$ for any  $1\le i\le j\le 5$, $i+j\ne6$, $k,l=1,2,3$.\footnote{Actually, 
 there is no need to consider $b_2^{(1)} $ and $b_3^{(2)}$, since the following brackets vanish:   $[\ ,\ ]\colon\bigwedge^i U\times\bigwedge^{n-i} U\to\slf(U)$ if $i=2$ and $[\ ,\ ]\colon\bigwedge^i V\times\bigwedge^{n-i} V\to
\slf(V)$ if $i=3$. But this makes easier to write   \eqref{casoYol}.}
 Consider the   product of $x=x_1\otimes x_2 \otimes x_3\in\mcL_{\bar i}$ with $y=y_1\otimes y_2 \otimes y_3\in\mcL_{\bar j}$, for   $1\le i\le j\le 5$, given by
\begin{equation}\label{casoYol}
[x,y]=\left\{
\begin{array}{ll}
a_{ij}\,x_1*y_1\otimes x_2*y_2\otimes x_3*y_3&i+j\ne6,\vspace{2pt} \\
\sum_{k=1}^3 b_i^{(k)}[x_k,y_k]\Pi_{r=1,r\ne k}^3(x_r,y_r) \ \qquad & i+j=6.
\end{array}\right.
\end{equation}

\noindent Extend the bracket to $\mcL$ making that the product of $\mcL_{\bar i}$ with $\mcL_{\bar j}$ if $i>j$ is skew-symmetric, and making $(\mcL_{\bar 0},[\ ,\ ])$ a subalgebra.
Then, $\mcL$ endowed with the  product $[\ ,\ ]$   is a Lie algebra if and only if 
\begin{equation}\label{eq_ecuacionestercero}
\begin{array}{l}
b_1^{(1)}=b_1^{(2)}=b_1^{(3)}=-a_{11}a_{25}=-a_{12}a_{35}=-a_{13}a_{45}=-a_{14}a_{55}, \vspace{2pt}\\
b_2^{(2)}=b_2^{(3)}=a_{12}a_{34}=a_{14}a_{25}=-a_{22}a_{44}=-a_{23}a_{45}, \vspace{2pt}\\
b_3^{(1)}=b_3^{(3)}=a_{13}a_{34}, \vspace{2pt}\\
a_{11}a_{22}=-a_{12}a_{13}.
\end{array}
\end{equation}
Moreover, $(\mcL, [\ ,\ ])$ is a simple exceptional Lie algebra isomorphic to $\e_8$. 
A solution of this system of   equations that, in particular, gives a model of $\e_8$, is
\begin{equation}\label{solYol}
\begin{array}{rl}
1&=a_{11}=a_{13}=a_{22}=a_{23}=a_{25}=a_{34}=a_{44}=a_{45}=b_{3}^{(1)}=b_{3}^{(3)},\\
-1&=a_{12}=a_{14}=a_{35}=a_{55}=b_{1}^{(1)}=b_{1}^{(2)}=b_{1}^{(3)}=b_{2}^{(2)}=b_{2}^{(3)}.
\end{array}
\end{equation}
\end{proposition}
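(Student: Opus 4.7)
The plan is to follow exactly the template used in Propositions~\ref{teo_Z5model} and \ref{teo_Z4model}. First I would verify that the bracket in \eqref{casoYol} admits a well-defined skew-symmetric extension: for $i=j$ the product $x_1*y_1\otimes x_2*y_2\otimes x_3*y_3$ (when $2i\neq 6$) or the sum $\sum_k b_i^{(k)}[x_k,y_k]\prod_{r\neq k}(x_r,y_r)$ (when $i=3$) must be antisymmetric in $(x,y)$; by Lemma~\ref{le_notacionesyproductos}(d) and Remark~\ref{re_dim2} the signs from the three slots $U$, $V$, $W$ combine to $-1$ in every case, so this is automatic. The Lie algebra property then reduces, by Remark~\ref{remark:slv} and the trilinearity of the Jacobian, to the identities $(J_{i,j,k})\colon J(\mcL_{\bar i},\mcL_{\bar j},\mcL_{\bar k})=0$ for $1\leq i\leq j\leq k\leq 5$.

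The diagonal cases $(J_{i,i,i})$ are handled exactly as before: writing $J(x,y,z)/(a_{ii}a_{[2i]_6,i})$ yields an expression built only from $*$, $\llcorner$ and $\sim$, and by the scalar-independence argument combined with the known existence of $\e_8$ as a realization (via Theorem~\ref{teo_main}, cf. Remark~\ref{re_haysol}), $J$ must vanish identically. For each remaining off-diagonal triple I would derive a necessary scalar relation by evaluating $J$ on concrete basic tensors drawn from fixed bases of $U$, $V$, $W$ and isolating a single equation. The key relations expected are: from $(J_{1,1,2})$ the constraint $a_{11}a_{22}=-a_{12}a_{13}$, coming from the fact that the second factor of $\mcL_{\bar 1}\otimes\mcL_{\bar 1}$ lands in $\bigwedge^2 V\cong V^*$; from $(J_{1,1,4})$ and $(J_{1,2,3})$ the chain $b_1^{(1)}=b_1^{(2)}=b_1^{(3)}=-a_{11}a_{25}=-a_{12}a_{35}=-a_{13}a_{45}=-a_{14}a_{55}$; from $(J_{2,2,4})$ together with $(J_{1,2,3})$ the chain for $b_2^{(2)}=b_2^{(3)}$; and from $(J_{3,3,k})$ the single relation $b_3^{(1)}=b_3^{(3)}=a_{13}a_{34}$.

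Sufficiency of \eqref{eq_ecuacionestercero} for every remaining triple follows by the same scalar-independence trick: once the relations are imposed, $J(x,y,z)$ factors as a common nonzero scalar times an expression written purely with $*$, $\llcorner$, $\sim$, $(\,,\,)$ and $[\,,\,]$, so it either vanishes identically or never vanishes, and the existence of the split $\e_8$ as one solution forces the former. After exhibiting \eqref{solYol} as an explicit solution to \eqref{eq_ecuacionestercero}, isomorphism between the Lie algebra built from any other solution and the one built from \eqref{solYol} is obtained by a homothety $f|_{\mcL_{\bar i}}=\alpha_i\,\mathrm{id}$, with $\alpha_0=1$ and the remaining $\alpha_i$ determined inductively by the conditions $\alpha_i\alpha_j/\alpha_{[i+j]_6}=a_{ij}$ and $\alpha_i\alpha_{6-i}=-b_i^{(\cdot)}$; consistency fixes $\alpha_1$ as a suitable sixth root and then the rest, exactly as in \eqref{eq_simplicidad} and \eqref{eq_simplicidad1}. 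Simplicity and the identification with $\e_8$ follow because one representative in this isomorphism class is $\e_8$.

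The main obstacle will be the sheer bookkeeping: with three ideals in $\mcL_{\bar 0}$, the scalar families $a_{ij}$ and $b_i^{(k)}$, and roughly thirty off-diagonal triples $(i,j,k)$, the delicate step is selecting, for each of the mixed triples $(J_{1,2,3})$, $(J_{1,2,4})$, $(J_{1,3,4})$, $(J_{2,3,4})$, $(J_{1,2,5})$, concrete test elements that isolate one clean relation at a time, while correctly tracking the three independent sign conventions coming from the $U$, $V$ and $W$ slots. The $V$-slot signs require particular care because $\dim V=3$ is odd, which affects the symmetry properties of both $(\,,\,)$ and $[\,,\,]$ on $\bigwedge V$ and hence determines when the contributions of the three $b_i^{(k)}$ terms combine or cancel.
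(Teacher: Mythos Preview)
Your approach is essentially the paper's: skew-symmetry check via Lemma~\ref{le_notacionesyproductos}(d), reduction to $(J_{i,j,k})$ for $1\le i\le j\le k\le 5$, necessity by evaluating on basic tensors, sufficiency by the scalar-independence argument invoking Remark~\ref{re_haysol}, and uniqueness via a diagonal homothety. Two bookkeeping points to correct before you execute it. First, your attribution of which $(J_{i,j,k})$ yields which relation is off in places: the equalities $b_1^{(1)}=b_1^{(2)}=b_1^{(3)}=-a_{11}a_{25}$ come from $(J_{1,1,5})$ (that is where the $b_1^{(k)}$ terms first appear, since $[\mcL_{\bar1},\mcL_{\bar5}]\subset\mcL_{\bar0}$), while $(J_{1,1,4})$ instead relates $b_1$ to $b_2$; and $b_3^{(1)}=b_3^{(3)}$ is read off from $(J_{1,2,3})$, not from any $(J_{3,3,k})$. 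Second, your isomorphism ansatz $\alpha_i\alpha_{6-i}=-b_i^{(\cdot)}$ is too uniform: in the target solution \eqref{solYol} one has $b_3=+1$ but $b_1=b_2=-1$, so the signs in the compatibility conditions $\alpha_i\alpha_j/\alpha_{[i+j]_6}=\pm a_{ij}$ genuinely depend on $(i,j)$ (the paper lists them explicitly), and you will need to track this when solving for $\alpha_1^6$.
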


\begin{proof}

First of all, we will check that all the conditions in \eqref{eq_ecuacionestercero} are necessary for $\mcL$ to be a Lie algebra, simply by applying the Jacobi identity to several triples of elements to achieve the required conditions. 
We abuse a little bit of the notations, by using $\{e_1,e_2\}$, $\{e_1,e_2,e_3\}$ and $\{e_1,\dots,e_6\}$ bases of $U$, $V$ and $W$, respectively. When we do a tensor product, there is no confusion in the position of the elements. Also, in case there is some ambiguity, we denote a linear map $F\colon U\to U$ with a subindex, $F_{_U}$, and similarly for endomorphisms of $V$ and $W$. 
Hence assume now that $\mcL$ is a Lie algebra.

$\star$ We compute, for    $x=e_1 \otimes e_1 \otimes e_1$, $y=e_2 \otimes e_2 \otimes e_2$ and 
$$
\begin{array}{ll}
z=e_2 \otimes e_{13} \otimes e_{13456}&\Rightarrow\ J(x,y,z) = (a_{11}a_{25}+b_1^{(1)})e_2 \otimes e_{1}\otimes e_{1},\\
z=e_1 \otimes e_{23} \otimes e_{13456}&\Rightarrow\ J(x,y,z) = (a_{11}a_{25}+b_1^{(2)})e_1 \otimes e_{2}\otimes e_{1},\\
z=e_2 \otimes e_{23} \otimes e_{13456}&\Rightarrow\ J(x,y,z) = (a_{11}a_{25}+b_1^{(3)})e_2 \otimes e_{2}\otimes e_{1},\\
z=1 \otimes e_{13} \otimes e_{34}&\Rightarrow\ J(x,y,z) = -(a_{11}a_{22}+a_{12}a_{13})1 \otimes e_1 \otimes e_{1234},\\
z=e_1 \otimes 1 \otimes e_{345}&\Rightarrow\  J(x,y,z) = -(a_{11}a_{23}+a_{13}a_{14})e_1 \otimes e_{12}\otimes e_{12345}.
\end{array}
$$
The Jacobi identity so implies that $b_1^{(1)}=b_1^{(2)}=b_1^{(3)}$, which we will denote as $b_1$, as well as
\begin{align}
  a_{11}a_{25}=-b_1, \label{caso3_5}\\
   a_{11}a_{22}=-a_{12}a_{13}, \label{caso3_1}\\
  a_{11}a_{23}=-a_{13}a_{14}. \label{caso3_2}
\end{align} 
 \noindent 
Use Lemma~\ref{le_notacionesyproductos} (b) to check that  $[e_{1},e_{23}]_{_V}-[e_{2},e_{13}]_{_V}=[e_{12},e_3]_{_V}$ and  $[e_1,e_{23456}]_{_W}-[e_2,e_{13456}]_{_W}=-[e_{12},e_{3456}]_{_W}$, and recall that $[\ ,\ ]_{_U}$ is symmetric, to get
$$
J(x,y,1 \otimes e_{3} \otimes e_{3456})=-( a_{11}b_2^{(2)}+a_{14}b_1 )[e_{12},e_{3}]_{_V}-(a_{11}b_2^{(3)}+a_{14}b_{1} )[e_{12},e_{3456}]_{_W}.
$$
Again its projections on $\slf(V)$ and $\slf(W)$ are both zero, so that $b_2^{(2)}=b_2^{(3)}=:b_2$ and
\begin{equation}\label{caso3_3_1}
    a_{11}b_2 =-a_{14}b_1.
\end{equation}

$\star$ Now take       $x=e_1 \otimes e_1 \otimes e_1$, $y=1 \otimes e_{23} \otimes e_{23}$, 
and compute 
$$
\begin{array}{ll}
J(x,y,e_2 \otimes 1 \otimes e_{456})=&(a_{23}b_1-a_{12}b_3^{(1)})[e_1,e_2]_{_U}+(a_{13}b_2-a_{23}b_1)[e_1,e_{23}]_{_V}
\\
&+(a_{23}b_1-a_{12}b_3^{(3)} )[e_1,e_{23456}]_{_W}-(a_{13}b_2-a_{12}b_3^{(3)} )[e_{23},e_{1456}]_{_W};
\end{array}
$$
taking into account that 
 $[e_1,e_{23}]_{_V}=-[e_{23},e_1]_{_V}$,   and $[e_1,e_{23456}]_{_W}-[e_{23},e_{1456}]_{_W}=[e_{123},e_{456}]_{_W}$. 
 The three projections   are zero, so that $b_3^{(1)}=b_3^{(3)}=:b_3$ and
  \begin{equation} \label{caso3_7}
 a_{23}b_1=a_{12}b_3=a_{13}b_2. 
 \end{equation}
Choosing a different  third element,
$$
\begin{array}{ll}
z=1 \otimes e_1 \otimes e_{1456}&\Rightarrow  J(x,y,z)=(a_{12}a_{34}-b_2)e_1 \otimes e_1 \otimes e_1,\\
z=e_2 \otimes e_{12} \otimes e_{12456}&\Rightarrow  J(x,y,z)=(-a_{12}a_{35}+a_{11}a_{25})1 \otimes e_{12} \otimes e_{12},
\end{array}
$$
   we obtain 
 \begin{equation}\label{caso3_7_0}
 a_{12}a_{34}=b_2,\qquad  a_{12}a_{35}=a_{11}a_{25}\stackrel{\eqref{caso3_5}}=-b_1.      
 \end{equation}
 
  \noindent
 $\star$  More necessary conditions appear when considering $x=e_1 \otimes e_1 \otimes e_1$, $y=e_2 \otimes 1 \otimes e_{234}$. We compute
 $$
\begin{array}{ll}
z=1 \otimes e_{2} \otimes e_{2356}&\Rightarrow  J(x,y,z)=-(a_{13}a_{44}-a_{14}a_{35} )1\otimes e_{12}\otimes e_{23}, \\
z= e_1 \otimes e_{23} \otimes e_{12356}&\Rightarrow  J(x,y,z)=(a_{13}a_{45}-a_{35}a_{12})e_1\otimes 1\otimes e_{123},
\end{array}
$$
which yields  
 \begin{equation} \label{caso3_8}
 a_{13}a_{44}=a_{14}a_{35},\qquad a_{13}a_{45}=a_{35}a_{12}.     
 \end{equation} 

  $\star$   Finally  $J(e_1 \otimes e_1 \otimes e_1,1 \otimes e_2 \otimes e_{2345},e_2 \otimes e_{13} \otimes e_{12346})=(a_{14}a_{55}-a_{45}a_{13})1\otimes e_1\otimes e_{1234}$, so that 
  \begin{equation}\label{old47}
  a_{14}a_{55}=a_{45}a_{13}.
  \end{equation}  
   Keeping in mind   this information, next we prove that all the equations in \eqref{eq_ecuacionestercero} are necessary. 
From
$
a_{12}b_3\stackrel{\eqref{caso3_7}}=a_{13}b_2\stackrel{\eqref{caso3_7_0}}=a_{13}a_{12}a_{34},
$
we get
\begin{equation}\label{new48}
b_{3}=a_{13}a_{34}.
\end{equation}
Also, $a_{11}b_2\stackrel{\eqref{caso3_3_1}}=-a_{14}b_1 \stackrel{\eqref{caso3_5}}=a_{14}a_{11}a_{25}$, so that
\begin{equation}\label{new49}
b_{2}=a_{14}a_{25}.
\end{equation}
Note 
$a_{11}a_{23}a_{45}\stackrel{\eqref{caso3_2}}=-a_{14}a_{13}a_{45} \stackrel{\eqref{caso3_8}}=-a_{14}a_{35}a_{12}\stackrel{\eqref{caso3_7_0}}=-a_{14}a_{11}a_{25}$,
getting
\begin{equation}\label{new50}
 a_{23}a_{45}=-a_{14} a_{25}\stackrel{\eqref{new49}}=-b_2.
\end{equation}
Take also into account 
$a_{35}a_{12}a_{23}\stackrel{\eqref{caso3_7_0}}=-b_1a_{23}\stackrel{\eqref{caso3_7}}=-a_{12}b_3  $, so
\begin{equation}\label{new52}
b_{3}=-a_{23}a_{35};
\end{equation}
and $a_{22}a_{13}a_{14}\stackrel{\eqref{caso3_2}}=-a_{11}a_{22}a_{23}\stackrel{\eqref{caso3_1}}=a_{12}a_{13}a_{23}$, which gives
\begin{equation}\label{new53}
a_{22} a_{14}=a_{12} a_{23}.
\end{equation}
And we also  need $a_{13}b_2\stackrel{\eqref{caso3_7}}=a_{12}b_3\stackrel{\eqref{new52}}=-a_{35}a_{12}a_{23}
\stackrel{\eqref{new53}}=-a_{22}a_{14}a_{35}\stackrel{\eqref{caso3_8}}=-a_{22}a_{13}a_{44}   $, which yields
\begin{equation}\label{new51}
b_{2}=-a_{22}a_{44}.
\end{equation}
Finally observe that Eqs.~\eqref{caso3_5}, \eqref{caso3_1}, \eqref{caso3_7_0}, \eqref{caso3_8} ,\eqref{old47}, \eqref{new48}, \eqref{new49}, \eqref{new50}, \eqref{new51} give all the conditions in   \eqref{eq_ecuacionestercero}.
 That is, the conditions in  \eqref{eq_ecuacionestercero} are necessary for $\mcL$ to be a Lie algebra.\smallskip


Conversely, assume that we have nonzero scalars satisfying  \eqref{eq_ecuacionestercero}  and let us prove that $\mcL$ is a Lie algebra. We can talk about the  skew-symmetric extension because, for $i=j$, the defined bracket already satisfies $[x,y]=-[y,x]$. In fact,
\begin{itemize}
\item If $i=j\in\{1,5\}$, then $x_1*y_1=-y_1*x_1$, $x_2*y_2=-y_2*x_2$ and $x_3*y_3=-y_3*x_3$;  
\item If $i=j\in\{2,4\}$, then $x_1*y_1=y_1*x_1$, $x_2*y_2=-y_2*x_2$ and $x_3*y_3=y_3*x_3$; 
\item If $i=j=3$, then $[x_1,y_1]=[y_1,x_1]$, $(x_1,y_1)=-(y_1,x_1)$, $(x_2,y_2)=(y_2,x_2)$, and $[x_3,y_3]=[y_3,x_3]$, $(x_3,y_3)=-(y_3,x_3)$.
\end{itemize}
 
The constructed skew-symmetric   algebra $(\mcL, [\ ,\ ]  )$ is a Lie algebra if and only if $J(\mcL_{\bar i},\mcL_{\bar j},\mcL_{\bar k})=0$ for $0\le i,j,k\le 5$. We will again denote this identity by $(J_{i,j,k})$. By Remark \ref{remark:slv} applied to $\mathfrak m=\oplus_{i=1}^5\mcL_{\bar i}$, we have only to check
   $(J_{i,j,k})$  for $1\le i\le j\le k\le 5$. Furthermore,  if one establishes some relations  between $a_{ij}a_{[i+j]_6,k}$, $a_{jk}a_{[j+k]_6,i}$, and $a_{ki}a_{[k+i]_6,j}$, (where we abuse of the notation and understand  $a_{i,6-i}=b_i$ if $i=1,2,3$, and $a_{ij}=a_{ji}$ if $i>j$), then the identity  $(J_{i,j,k})$ holds. In fact,  $\frac{J(x,y,z)}{a_{ij}a_{[i+j]_6,k}}$ would be written without any appearance of the scalars, only   in terms of  $x$, $y$, $z$, $*$, $\sim$ and $\llcorner$; so that it would be always zero due to the existence of scalars making $\mcL$ a Lie algebra, as in   Remark~\ref{re_haysol}. For instance, $(J_{i,i,i})$ is always true since the three quantities are obviously equal, $a_{ii}a_{[2i]_6,i}$. In case two of the indices $i,j,k$ are repeated, we will only have one equation relating the scalars, instead of two.  
      All this means that, when we     check that  \eqref{eq_ecuacionestercero}   implies all the identities on the left of Table~\ref{tabla:caso3},  this will   guarantee the corresponding identities of type $(J_{i,j,k})$ and then the Jacobi identity.
\begin{table} 
\centering
\begin{tabular}{|c|c|}\hline
   Identity&Gives\\  \hline    $a_{11}a_{22}=-a_{12}a_{13}$&$(J_{1,1,2})$\\
    $a_{11}a_{23}=-a_{13}a_{14}$&$(J_{1,1,3})$\\
    $ a_{11}b_2 =-a_{14}b_1 $&$(J_{1,1,4})$\\
    $a_{11}a_{25} =-b_1$&$(J_{1,1,5})$\\
    $a_{12}a_{23}=a_{14}a_{22}$&$  (J_{1,2,2})$\\
     $a_{13}a_{34}=b_3$&$  (J_{1,3,3})$ \\
      $a_{14}a_{45}=a_{44}a_{12}$&$  (J_{1,4,4})$ \\
       $a_{14}a_{55}=-b_1$&$  (J_{1,5,5})$ \\
 $a_{22}a_{34}=a_{23}a_{25}$&$  (J_{2,2,3})$ \\ 
  $a_{22}a_{44}=-b_2$&$ (J_{2,2,4})$ \\
  $a_{22}a_{45}=-a_{25}a_{12}$& $  (J_{2,2,5})$ \\ 
   $a_{23}a_{35}=-b_3$&$  (J_{2,3,3})$\\
 $a_{22}a_{44}=-b_2$&$  (J_{2,4,4})$\\
 $a_{55}b_2=-a_{25}b_1$&$ (J_{2,5,5})$\\
 $a_{13}a_{34}=b_3$&$  (J_{3,3,4})$\\
 $ a_{35}a_{23}=-b_3 $ &$  (J_{3,3,5})$\\
 $a_{23}a_{44}=-a_{34}a_{14}$&$  (J_{3,4,4})$\\
 $a_{35}a_{25}=a_{55}a_{34}$&  $  (J_{3,5,5})$\\
 $a_{44}a_{25}=a_{45}a_{34}$& $  (J_{4,4,5})$ \\
 $a_{45}a_{35}=a_{55}a_{44}$& $  (J_{4,5,5})$ \vspace{2pt}\\
  \hline
   \end{tabular}\vspace{2pt}
   \qquad \begin{tabular}{|c|c|}\hline
   Identity&Gives \\    \hline 
   $a_{23}b_1=a_{12}b_3=a_{13}b_2$&  $  (J_{1,2,3})$\\
    $b_2=a_{12}a_{34}=a_{14}a_{25}$&  $  (J_{1,2,4})$\\
    $   a_{12}a_{35}=a_{11}a_{25}=-b_1  $&  $  (J_{1,2,5})$\\
    $a_{13}a_{44}=a_{14}a_{35}=a_{11}a_{34}$&$  (J_{1,3,4})$ \\
    $a_{13}a_{45}=a_{35}a_{12}=-b_1$&$  (J_{1,3,5})$ \\
     $a_{14}a_{55}=a_{45}a_{13}=-b_1$&$  (J_{1,4,5})$ \\
     $a_{23}a_{45}=-a_{34}a_{12}=-b_2$&$  (J_{2,3,4})$ \\
 $a_{23}a_{55}=a_{35}a_{22}=-a_{25}a_{13}$&$  (J_{2,3,5})$\\
$a_{25}a_{14}=-a_{23}a_{45}=b_2$&$ (J_{2,4,5})$\\
$-a_{34}b_1=a_{35}b_2=a_{45}b_3$&$ (J_{3,4,5})$\\
    \hline
   \end{tabular}
   \caption{Equivalent conditions to $J(\mcL_{\bar i},\mcL_{\bar j},\mcL_{\bar k})=0$.}
\label{tabla:caso3}
\end{table}

Proving all the  identities in Table~\ref{tabla:caso3}\, is a tedious but rather straightforward computation starting from \eqref{eq_ecuacionestercero}. First we obtain immediately the following useful equations:   
\begin{equation}\label{eqold}
 \dfrac{a_{44}}{a_{34}}=\dfrac{-a_{12}}{a_{22}}=\dfrac{a_{11}}{a_{13}}=\dfrac{a_{45}}{a_{25}}=\dfrac{-a_{14}}{a_{23}}.
\end{equation}
Next proceed case by case. The identity related to $(J_{1,1,2})$,  \eqref{caso3_1}, is one of the identities in the list \eqref{eq_ecuacionestercero}. 
The one related to  $(J_{1,1,3})$ follows from \eqref{eqold}.
Third, $a_{11}a_{25}b_2=-b_1b_2=-a_{14}a_{25}b_1$, so that we get $a_{11}b_2 =-a_{14}b_1$,
the identity necessary for $(J_{1,1,4})$. The restriction for $(J_{1,1,5})$ appears in \eqref{eq_ecuacionestercero}. 
For  $  (J_{1,2,2})$, we need  $a_{12}a_{23}=a_{14}a_{22}$, which follows from  \eqref{eqold}. 
 The two  identities necessary for $  (J_{1,2,3})$ are easy to recover:
 $a_{23}b_1=-a_{23}a_{13}a_{45}=a_{13}b_2$ and $a_{13}b_2=a_{13}a_{12}a_{34}=a_{12}b_3$. The identities for $  (J_{1,2,4})$,
 $  (J_{1,2,5})$, $  (J_{1,3,3})$, $  (J_{1,3,5})$, $  (J_{1,4,5})$ and $(J_{1,5,5})$  just appear    in the list. 
 Now,  $a_{13}a_{44}=a_{11}a_{34}$ comes from \eqref{eqold},
  but also
 $a_{11}a_{34}a_{12}=a_{11}a_{14}a_{25}=a_{14}a_{35}a_{12}$ gives
 $a_{11}a_{34}=a_{14}a_{35}$; so that 
 $  (J_{1,3,4})$ holds. 
 Also $(a_{14}a_{25})(a_{13}a_{45})=(-a_{44}a_{22})(a_{11}a_{25})=a_{44}a_{12}a_{13}a_{25}$ gives $  (J_{1,4,4})$.
 The identity for $(J_{2,2,3})$, $a_{34}a_{22} =a_{25} a_{23}$, follows from $a_{34}a_{22}a_{11}=-a_{34}a_{12}a_{13}=-a_{14}a_{25}a_{13}\stackrel{\eqref{eqold}}=a_{25}a_{11}a_{23}$. Next,
 $a_{22}a_{45}a_{14}\stackrel{\small{J_{144}}}=a_{22}a_{44}a_{12}=-a_{12}a_{25}a_{14}$, and, removing $a_{14}$, we get the identity in $(J_{2,2,5})$. Note $a_{12}b_3=b_1a_{23}=-a_{12}a_{35}a_{23}$, which gives $b_3=-a_{23}a_{35}$ and $(J_{2,3,3})$. The identities
 $(J_{2,2,4})$ and $(J_{2,3,4})$ belong to our original list. As regards the two equations in  $(J_{2,3,5})$,
 simply remove $a_{12}$ from $a_{12}a_{23}a_{55}\stackrel{\eqref{eqold}}=a_{22}a_{14}a_{55}=a_{22}a_{12}a_{35}$, as well as 
 remove $a_{14}$ from $a_{14}a_{55}a_{23}=a_{25}a_{11}a_{23}\stackrel{\eqref{eqold}}=-a_{25}a_{13}a_{14}$.
 Again $(J_{ 2,4,4})$, $(J_{2,4,5 })$ and $(J_{ 3,3,4})$ are direct from \eqref{eq_ecuacionestercero}.
 We  easily get $(J_{2,5,5 })$ as $a_{55}b_2=a_{55}a_{25}a_{14}=-a_{25}b_1$.
 Note $(J_{ 3,3,5})=(J_{2,3,3 })$. Both  $(J_{3,4,4 })$ and $(J_{4,4,5 })$  are direct  from \eqref{eqold}.
 From $a_{55}a_{12}a_{34}=a_{55}a_{25}a_{14}=a_{25}a_{12}a_{35}$, we derive $(J_{3,5,5 })$.
 Multiply the two identities above to have $(a_{35}a_{25})(a_{45}a_{34})=(a_{55}a_{34})(a_{44}a_{25})$ and simplify $a_{25}a_{34}$ to get $(J_{4,5,5 })$. Only $(J_{3,4,5 })$ is left, which is achieved from $a_{45}b_3=a_{45}a_{13}a_{34}=-b_1a_{34}$
 and
 $a_{35}b_2=-a_{35}a_{45}a_{23}\stackrel{J_{335}}=a_{45}b_3$. This finishes the proof that $\mcL$ is a Lie algebra.
 The fact that  \eqref{solYol} is a solution is plain.

Finally we show that all the   Lie algebras   obtained by this procedure are isomorphic. Let us give an isomorphism between $\mcL$ and $\mcL'$  the $\ZZ_6$-graded vector spaces as in \eqref{model_Z6} endowed with the Lie bracket as in \eqref{casoYol} with the coefficients given by   \eqref{eq_ecuacionestercero} and 
 \eqref{solYol} respectively. 
 An isomorphism can be provided by  $ f\colon\mcL\to\mcL'$, $f\vert_{\mcL_{\bar i}}=\alpha_i \id_{\mcL_{\bar i}}$,   by choosing some scalars $\{\alpha_i:i=0,\dots,5\}\subset\FF^\times$    such that
  $\alpha_1^6={-a_{11}a_{12}a_{13}a_{14}b_1}$ and 
$$\alpha_0=1,\quad 
\alpha_2=\frac{\alpha_1^2}{a_{11}},\quad
\alpha_3=  \frac{a_{13}a_{14}b_1}{\alpha_1^3} ,\quad
\alpha_4=\frac{a_{14}b_1}{\alpha_1^2},\quad
\alpha_5=\frac{-b_1}{\alpha_1}.$$
If we denote by $a_{i,6-i}=b_i$, the condition for $f$ to be isomorphism follows from 
 $$
 \frac{\alpha_i\alpha_j}{\alpha_{[i+j]_6}}=
 \begin{cases}a_{ij},\quad   \textrm{ if } (i,j)=(1,1),(1,3),(2,2),(2,3),(2,5),(3,3),(3,4),(4,4),(4,5),\\
 -a_{ij},\quad   \textrm{ if }(i,j)=(1,2),(1,4),(3,5),(5,5),(1,5),(2,4);
 \end{cases}
 $$
 which is  a routine calculation.
  Since one of the solutions of  \eqref{eq_ecuacionestercero} makes the corresponding  Lie algebra $\mcL$ isomorphic to  $\e_8$, then
 all the obtained Lie algebras are simple and all of them are  isomorphic to $\e_8$. 
 \end{proof} 

  

 \subsection{Model based on the subalgebra of type $4A_2$}\label{se_4A2}  
 
 The maximal elementary abelian $p$-subgroups of $E_8$ (from the viewpoint of algebraic groups) have been
obtained in  \cite{Griess}, where a $\ZZ_3^5$ subgroup of the group of type $E_8$, automorphisms of $\e_8$, is described. This 3-group coincides with its
centralizer, so producing a fine $\ZZ_3^5$-grading on the Lie algebra $\e_8$. If we consider the coarsening produced by any two of the  order 3 automorphisms involved,  the $\ZZ_3^2$-grading so obtained is toral, and hence it satisfies the hypothesis in Theorem~\ref{teo_main}.

 A concrete description of this $\ZZ_3^5$-grading on $\e_8$ can be found on \cite[\S6.3]{EK}, which starts with two $\ZZ_3^2$-graded Okubo algebras and constructs $\e_8$ from these algebras. Thus  $\e_8$ is naturally endowed with the grading produced when combining the two pairs of related order three automorphisms with the triality automorphism. This nice construction, due to A.~Elduque, makes use of symmetric composition algebras. It is clear that we can describe our $\ZZ_3^2$-grading as a coarsening of the above $\ZZ_3^5$-grading, simply by taking the 
 $\ZZ_3^2$-grading induced on $\e_8$  when taking only the $\ZZ_3^2$-grading on one of the Okubo algebras involved. But we would like to describe a model similar to all the others in this work.  The first step is to describe the homogeneous components of the $\ZZ_3^2$-grading. Recall from Section~\ref{se_gradings} that there are two types of order three automorphisms, which fix an algebra of type $\mathfrak{a}_8$ and   an algebra of type $\e_6\oplus\mathfrak{a}_2$, respectively. So the dimensions of the neutral homogeneous component are respectively, 80 and 86. Both the order three automorphisms involved now are of the second type. The $\ZZ_3$-grading is given by $\mcL=\mcL_{\bar0}\oplus \mcL_{\bar1}\oplus \mcL_{\bar2}$, for
 $$
 \mcL_{\bar0}=\e_6\oplus\mathfrak{sl}(V),\qquad
 \mcL_{\bar1}=V(\varpi_1)\otimes V,\qquad
  \mcL_{\bar2}=V(\varpi_1)^*\otimes V^*,
 $$
 if $V$ is a 3-dimensional vector space and $V(\varpi_1)$ denotes the $\e_6$-irreducible module of highest
weight $\varpi_1$, the first fundamental weight.
 Now,   recall the nice $\ZZ_3$-grading on $\mathcal{M}=\e_6$,   described in a very symmetric way as
 $$
 \e_6= \mathfrak{sl}(V)\oplus\mathfrak{sl}(V)\oplus\mathfrak{sl}(V)\,\oplus\, (V\otimes V\otimes V)\,\oplus\,(V^*\otimes V^*\otimes V^*).
 $$
 According to \cite{modsgrad}, the module $\mathcal{V}= V(\varpi_1)$ can be $\ZZ_3$-graded in a way compatible with the $\ZZ_3$-grading on $\e_6$, 
 that is, $\mathcal{M}_{\bar i}\cdot \mathcal{V}_{\bar j}\subset \mathcal{V}_{\bar i+\bar j}$, for
 $$
  \mathcal{V}_{\bar0}=\FF\otimes V\otimes V^*,\qquad 
  \mathcal{V}_{\bar1}=V\otimes  V^*\otimes  \FF, \qquad
  \mathcal{V}_{\bar2}=V^*\otimes  \FF\otimes  V.
  $$
  
\noindent  All this together gives the homogeneous components of the $\ZZ_3^2$-grading on $\e_8$ as modules for the neutral component
 $
 \mcL_{(\bar0,\bar0)}= \mathfrak{sl}(V)\oplus\mathfrak{sl}(V)\oplus\mathfrak{sl}(V)\oplus\mathfrak{sl}(V)$:
 $$
 \begin{array}{ll}
  \mcL_{(\bar0,\bar1)}=\FF\otimes  V\otimes  V^*\otimes  V\,, \qquad \qquad &
   \mcL_{(\bar0,\bar2)}=\FF\otimes  V^*\otimes  V\otimes  V^*\,,\\
   \mcL_{(\bar1,\bar0)}=V\otimes V\otimes V\otimes  \FF\,,&
   \mcL_{(\bar2,\bar0)}=V^*\otimes V^*\otimes V^*\otimes  \FF\,,\\
     \mcL_{(\bar1,\bar1)}=V \otimes V^*  \otimes \FF  \otimes V\,,  &
   \mcL_{(\bar2,\bar1)}= V^*\otimes  \FF \otimes   V\otimes  V\,, \\
     \mcL_{(\bar1,\bar2)}=V \otimes  \FF \otimes  V^* \otimes V^* \,, &
   \mcL_{(\bar2,\bar2)}= V^*\otimes  V \otimes  \FF \otimes   V^*\,.
   \end{array}
 $$
 Thus we have the $\ZZ_3^2$-graded vector space to be endowed with a Lie algebra structure. 
 
 \begin{proposition} \label{teo_Z32model}

Let $\mcL$ be the graded vector space given, for any $i,j=0,1,2$,  by
\begin{equation}\label{ecumodel_Z32}
\begin{array}{ll}
\mcL_{(\bar 0,\bar 0)}&=\mathfrak{sl}(V)\oplus\mathfrak{sl}(V)\oplus\mathfrak{sl}(V)\oplus\mathfrak{sl}(V),\\
\mcL_{(\bar i,\bar j)}&=\bigwedge^{[i]_3}V\otimes\bigwedge^{[i+j ]_3}V\otimes\bigwedge^{[i+2j ]_3}V\otimes\bigwedge^{[j ]_3}V,\qquad
\end{array}
\end{equation}
 where $V$  denotes a vector space  over $\FF$ of dimension $3$. 
 
 Note that, for any   $(\bar 0,\bar 0)\ne\alpha=(\bar i,\bar j) \in\ZZ_3^2$, there is just one index $k_\alpha\in\{1,2,3,4\}$ such that $\alpha_{k_\alpha}=0$, where   $\alpha_1=[i]_3$, $\alpha_2=[i+j]_3$, $\alpha_3=[i+2j]_3$, $\alpha_4=[j]_3$, that is, $\mcL_\alpha=\bigwedge^{\alpha_1}V\otimes\bigwedge^{\alpha_2}V\otimes\bigwedge^{\alpha_3}V\otimes\bigwedge^{\alpha_4}V$.
 
 Consider the product of $\mcL_{(\bar 0,\bar 0)}$ with $\mcL_{(\bar i,\bar j)}$ given by the action of the $k$th ideal of $\mcL_{\bar0}$ on the $k$th slot ($k=1,2,3,4$). Consider the   product of $x=x_1\otimes x_2\otimes x_3\otimes x_4\in\mcL_{\alpha}$ and  $y=y_1\otimes y_2\otimes y_3\otimes y_4\in\mcL_{\beta}$, for   $(\bar 0,\bar 0)\ne\alpha,\beta\in\ZZ_3^2$, given by
\begin{equation}\label{eq_pr4A2}
[x,y]=\left\{
\begin{array}{ll}
a_{\alpha,\beta}\,x_1*y_1\otimes x_2*y_2\otimes x_3*y_3\otimes x_4*y_4&\textrm{if }\beta\ne2\alpha, \\
\sum_{i=1,i\ne k_\alpha}^4 b_\alpha^{(i)}[x_i,y_i]_i\Pi_{j=1,j\ne i}^4(x_{j},y_{j})  &\textrm{if }\beta=2\alpha,
\end{array}\right.
\end{equation}
for some nonzero scalars $a_{\alpha,\beta}$ and $b_\alpha^{(i)}$ (for any $i\ne k_\alpha$).

 \noindent  
Then, $\mcL$ endowed with the  product $[\ ,\ ]$   is a Lie algebra if and only if 
\begin{equation}\label{caso4_0}
\begin{split} 
a_{\alpha,\beta}&=a_{\beta,\alpha },\\
-b_\alpha^{(i)}&=a_{\alpha,\alpha}a_{2\alpha,2\alpha}   ,\\
-a_{\alpha,\alpha}a_{2\alpha, \beta}&=a_{\alpha, \beta}a_{\alpha, \alpha+\beta},\\
-a_{\alpha,\beta}a_{\alpha+\beta,\alpha+\beta}&=a_{ \beta,\alpha+\beta}a_{\alpha+2\beta,\alpha },\\ 
-b_\alpha^{(i)}&=a_{\alpha,   \beta}a_{\alpha+\beta,2\alpha }   ,  
\end{split}
\end{equation}
for all $\beta\ne\alpha,2\alpha$ and for all $i\ne k_\alpha$.

Moreover, $(\mcL, [\ ,\ ])$ is a simple exceptional Lie algebra isomorphic to $\e_8$. 
A solution of this system of  equations that, in particular, gives a model of $\e_8$, is
\begin{equation}\label{solcaso4}
\begin{array}{rll}
1&=a_{\alpha,\beta},\qquad&\forall\alpha\ne\beta,\\
-1&=b_\alpha^{(i)}=a_{\alpha,\alpha}, \quad\  &\forall i\ne k_\alpha.
\end{array}
\end{equation}
\end{proposition}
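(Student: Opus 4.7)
The plan is to follow the pattern already established in Propositions~\ref{teo_Z5model}, \ref{teo_Z4model} and \ref{teo_Z6model}. First, I check that \eqref{eq_pr4A2} extends to a well-defined skew-symmetric bilinear map on $\mcL$: for $\beta=\alpha$ (first branch, since $2\alpha\neq\alpha$ in $\ZZ_3^2$) the intrinsic skew-symmetry follows from Lemma~\ref{le_notacionesyproductos}(d) applied slot by slot, noting that the four slot degrees $\alpha_1,\alpha_2,\alpha_3,\alpha_4$ contain exactly one $0$ and three values among $\{1,2\}$, so the product of the four sign factors is $-1$; for $\beta=2\alpha$ the symmetry types of $(\ ,\ )$ and $[\ ,\ ]$ combine in the same way as in the analogous cases of the previous sections. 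The relation $a_{\alpha,\beta}=a_{\beta,\alpha}$ appearing in \eqref{caso4_0} is exactly what is needed for the unordered definition to be unambiguous when both orderings fall in the first branch. Then, by Remark~\ref{remark:slv}, it suffices to verify $J(\mcL_\alpha,\mcL_\beta,\mcL_\gamma)=0$ for all nonzero $\alpha,\beta,\gamma\in\ZZ_3^2$, which I denote $(J_{\alpha,\beta,\gamma})$.

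To extract the necessary relations I mimic the previous proofs: for each family in \eqref{caso4_0} I choose a triple $(x,y,z)$ of basic homogeneous elements $e_{i_1}\otimes\cdots$ for which $J(x,y,z)$ collapses to a scalar multiple of a single nonzero tensor. The first equality $a_{\alpha,\beta}=a_{\beta,\alpha}$ arises from the sign bookkeeping of the skew-symmetric extension. The family $-b_\alpha^{(i)}=a_{\alpha,\alpha}a_{2\alpha,2\alpha}$ is extracted from $(J_{\alpha,\alpha,2\alpha})$ applied to elements for which only the $i$-th slot contributes to the $\mathfrak{sl}(V)$-term, exactly as in Eq.~\eqref{eq114}. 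The remaining three families come from $(J_{\alpha,\alpha,\beta})$, $(J_{\alpha,\beta,\alpha+\beta})$ with $\alpha+\beta+\gamma=0$, and $(J_{\alpha,\beta,2\alpha})$ respectively, again by pinning down three slots and letting the fourth determine the relation. The crucial time-saver, by Remark~\ref{re_haysol} (the split $\e_8$ admits such a $\ZZ_3^2$-grading, so the system has at least one solution), is that once these relations are imposed, every remaining $(J_{\alpha,\beta,\gamma})$ becomes, after dividing by a suitable nonzero product of the $a$'s and $b$'s, a scalar-free identity on $V$-tensors, which must therefore vanish identically. So sufficiency is automatic from necessity plus the existence of one solution.

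For the identification with the split $\e_8$, I rescale $\mcL_\alpha$ by $\lambda_\alpha\in\FF^\times$ (with $\lambda_{(\bar 0,\bar 0)}=1$), transferring the bracket with coefficients satisfying \eqref{caso4_0} to the bracket with coefficients \eqref{solcaso4}. The required conditions are $\lambda_\alpha\lambda_\beta/\lambda_{\alpha+\beta}=\pm a_{\alpha,\beta}$ (with the sign reading off \eqref{solcaso4}) together with $\lambda_\alpha\lambda_{2\alpha}=-b_\alpha^{(i)}$, where the latter is independent of $i\neq k_\alpha$ precisely by the family $-b_\alpha^{(i)}=a_{\alpha,\alpha}a_{2\alpha,2\alpha}$. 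A spanning tree in $\ZZ_3^2\setminus\{(\bar 0,\bar 0)\}$ lets me fix five of the $\lambda_\alpha$ freely and then determine the remaining three; the consistency of all other relations is forced precisely by \eqref{caso4_0}. Taking the needed roots is possible over the algebraically closed field $\FF$, so every solution produces an algebra isomorphic to $\e_8$, and in particular is simple. The main obstacle is purely combinatorial bookkeeping: eight nonzero homogeneous components in $\ZZ_3^2$ with a four-slot tensor structure produce a long list of triples $(\alpha,\beta,\gamma)$, and the $b_\alpha^{(i)}$ relations must be shown to be consistent across all $i\neq k_\alpha$. This combinatorics is tamed by the $S_3$-symmetry permuting the three nontrivial one-dimensional cosets of $\ZZ_3^2$ and the $S_4$-symmetry on the four slots, both respected by \eqref{ecumodel_Z32}, so the verification reduces to a small number of essentially distinct cases.
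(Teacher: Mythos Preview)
Your proposal follows essentially the same strategy as the paper: classify the Jacobi triples $(\alpha,\beta,\gamma)$ into a handful of combinatorial types, extract the relations in \eqref{caso4_0} from well-chosen basic elements in each type, and then invoke Remark~\ref{re_haysol} to conclude that the relations are also sufficient. The paper makes this classification explicit into five types ($(J_{\alpha,\alpha,2\alpha})$, $(J_{\alpha,\alpha,\beta})$, $(J_{\alpha,\beta,\alpha+\beta})$, $(J_{\alpha,2\alpha,\beta})$, $(J_{\alpha,\beta,2(\alpha+\beta)})$) and records the elementary identities $(u*v)*\eta=-(v*\eta)*u+(\eta*u)*v$ and its $\bigwedge^2V$-analogue needed to handle the slot where the exponents $\alpha_i,\beta_i$ differ; you treat this more schematically but with the same content.

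The one genuine difference is the last step. The paper does \emph{not} rescale to \eqref{solcaso4}; instead it proves simplicity directly (adapting Remark~\ref{re_simplicidad}) and then concludes ``248-dimensional and simple, hence $\e_8$''. Your rescaling route is viable in principle, but your sketch is imprecise: the ``spanning tree'' remark with ``five free, three determined'' does not match the structure of $\ZZ_3^2$, and you do not address why the multiplicative $2$-cocycle $(\alpha,\beta)\mapsto \pm a_{\alpha,\beta}$ is a coboundary (over a non-cyclic group $H^2(\ZZ_3^2,\FF^\times)$ is nontrivial). It is, because the first relation in \eqref{caso4_0} forces $a_{\alpha,\beta}=a_{\beta,\alpha}$ and symmetric $2$-cocycles on an abelian group with values in a divisible group are coboundaries; but this should be said. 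The paper's simplicity argument avoids the issue entirely.
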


Here we consider more scalars and more equations than necessary, unlike the above models based on cyclic groups. This permits us to take advantage of a greater symmetry,  and so to handle   a big number of equations simultaneously.

\begin{proof}
Recall that,  for all $u,v\in V$ and all $\mu, \eta\in  \bigwedge^2V$,
\begin{equation}\label{comoda}
\begin{array}{cc}
u*v=-v*u,\qquad  &\mu*\eta=-\eta*\mu,\vspace{2pt} \\
\quad(u,\eta)=u*\eta=\eta*u=(\eta,u),\qquad\ & [u,\eta]=-[\eta,u].
\end{array}
\end{equation}
Take also into account that $1*u=u=u*1$ and that $1*\eta=\eta=\eta*1$. Then, for any $x=\otimes_{i=1}^4 x_i\in\mcL_\alpha$ and
 $y=\otimes_{i=1}^4 y_i\in\mcL_\beta$, we have:
 \begin{itemize}
 \item if $\beta=\alpha$, then $x_i*y_i=-y_i*x_i$ for just three indices $i\in\{1,2,3,4\}$ (precisely, if and only if $i\ne k_\alpha$), so that $[x,y]=-[y,x]$;
 \item  if $\beta\ne \alpha,2\alpha$, then $x_i*y_i=-y_i*x_i$ for just one  index $i\in\{1,2,3,4\}$, so that $\otimes_{i=1}^4 x_i*y_i =- \otimes_{i=1}^4 y_i*x_i $. This implies that $[x,y]=-[y,x]$ if and only if $a_{\alpha,\beta}=a_{\beta,\alpha }$;
 \item  if $\beta=2\alpha$, then $\alpha=2\beta$. For any $i\ne k_\alpha$ ($=k_\beta$), one of the elements $x_i,y_i$ belongs   to $V$ and the other one to $\bigwedge^2V$, so that 
$ [x_i,y_i]\Pi_{j=1,j\ne i}^4(x_{j},y_{j}) =-[y_i,x_i]\Pi_{j=1,j\ne i}^4(y_{j},x_{j}) $. This gives that $[x,y]=-[y,x]$ if and only if $b_\alpha^{(i)}=b_{2\alpha}^{(i)}$ for all $i\ne k_\alpha$. 
 \end{itemize}

 Consequently, the bracket defined in $\mcL$ is skew-symmetric if and only if $a_{\alpha,\beta}=a_{\beta,\alpha }$ (for any $\beta\ne 2\alpha$) and 
 $b_\alpha^{(i)}=b_{2\alpha}^{(i)}$ for all $i\ne k_\alpha$.  Assume these conditions for the rest of the proof. Thus we have to determine 32 scalars type $a_{\alpha,\beta}$ (8 with $\alpha=\beta$ and 24 with $ \beta\notin\langle\alpha\rangle$) and 12 type $b_{\alpha}^{(i)}$.
 
 Now, the condition for $\mcL$ to be a Lie algebra reduces to check that the Jacobi identities   $J(\mcL_{\alpha},\mcL_{\beta},\mcL_{\gamma})=0$ hold for any choice of $\alpha,\beta,\gamma\in\ZZ_3^2$. We denote such identities as $(J_{\alpha,\beta,\gamma})$. 
 Again they are always satisfied whenever $(\bar0,\bar0)\in\{ \alpha,\beta,\gamma \}$, and also when $\alpha=\beta=\gamma$. Of course, the order is not important in the choice of the triple $\{\alpha,\beta,\gamma\}$. Note that we have just five different kind of identities to study:
 \begin{enumerate} 
 \item[\rm (i)] $(J_{\alpha,\alpha,2\alpha})$;
 \item[\rm (ii)] $(J_{\alpha,\alpha,\beta})$; 
 \item[\rm (iii)] $(J_{\alpha,\beta,\alpha+\beta})$;
 \item[\rm (iv)] $(J_{\alpha,2\alpha,\beta})$;
 \item[\rm (v)] $(J_{\alpha,\beta,2(\alpha+\beta)})$;
 \end{enumerate}
 for any $(\bar0,\bar0)\ne \alpha,\beta\in\ZZ_3^2$ with $\beta\ne \alpha,2\alpha$. Indeed, if there are two repeated indices in $\{\alpha,\beta,\gamma\}$, of course the situations (i) and (ii) appear, otherwise we can assume the three $\alpha$, $\beta$   and $\gamma$ different. If the sum of two of them is  $(\bar0,\bar0)$; then we have (iv)  with $\beta\ne \alpha,2\alpha$.  If this is not the case but one element is the sum of the  other two, then we have (iii), again with $\beta\ne \alpha,2\alpha$. If the situation is not any of these ones, $\gamma\in\{2\alpha+2\beta,2\alpha+\beta,\alpha+2\beta\}$, but in the two last cases the elements could also be labelled to be in the situation (iii); so that $\gamma=2\alpha+2\beta$. To summarize, we have 8 identities of type (i), 48 of type (ii), 24 of type (iii) ($\alpha$ and $\beta$ interchangeable), 24 of type (iv) and 8 of type (v) (each of the three elements is twice the sum of the other ones), in total, 112 identities. 
 Some of the resultant equations are redundant, but this does not make it more difficult to find a solution. First we focus on writing the equations related to the identities   $J(\mcL_{\alpha},\mcL_{\beta},\mcL_{\gamma})=0$ according to this distribution of cases (i) $-\dots-$(v). \smallskip
 
 \noindent
 $ \boxed{ (J_{\alpha,\alpha,2\alpha})}$
  Consider first the case $\alpha=(\bar1,\bar0)$, so that  $\mcL_{\alpha}=V\otimes V\otimes V\otimes  \FF$. Let 
 $x=x_1\otimes x_2\otimes x_3\otimes1,z=z_1\otimes z_2\otimes z_3\otimes1\in\mcL_\alpha$ and take any element
 $y=\otimes_{i=1}^3 y_i\otimes1\in\mcL_{2\alpha}$ such that $(x_1,y_1)=0$. Then 
 $$
 [[x,y],z]=b_{\alpha}^{(1)}[x_1,y_1]_1(z_1)\otimes (x_2,y_2)z_2\otimes (x_3,y_3)z_3\otimes 1.
 $$
 (Recall that, for $f\in\mathfrak{sl}(V)$, $f_1$ denotes $(f,0,0,0)\in 4\mathfrak{sl}(V)$.)
 If besides the elements have been chosen such that $(y_2,z_2)=0=(y_3,z_3)$, then $[[y,z],x]=0$.  As usual, let us fix a basis $\{e_1,e_2,e_3\}$ such that $\phi(e_{123})=1$ in order to make concrete computations with the help of Lemma~\ref{le_notacionesyproductos}. For instance, if we take
 $$
 x_1=z_2=z_3=e_1,\quad x_2=x_3=z_1=e_2,\quad y_1=y_2=y_3=e_{31},
 $$
  thus $ [[x,y],z]=b_{\alpha}^{(1)} [e_1,e_{31}]_1(e_2)\otimes (e_2,e_{31})e_1\otimes  (e_2,e_{31})e_1\otimes 1=b_{\alpha}^{(1)} e_1\otimes   e_1 \otimes  e_1  \otimes 1$ and
 $$\begin{array}{ll}
 [[z,x],y]&=a_{\alpha,\alpha}a_{2\alpha,2\alpha} (z_1*x_1)*y_1\otimes (z_2*x_2)*y_2\otimes (z_3*x_3)*y_3\otimes 1\\
 &=a_{\alpha,\alpha}a_{2\alpha,2\alpha} e_1\otimes   e_1 \otimes  e_1  \otimes 1.
 \end{array}
 $$
 That is, for our choice $x=e_1\otimes e_2\otimes e_2\otimes 1$, $z=e_2\otimes e_1\otimes e_1\otimes 1$, $y=e_{31}\otimes e_{31}\otimes  e_{31}\otimes 1$, we have
 $$
 J(x,y,z)=  \big( b_{\alpha}^{(1)}+a_{\alpha,\alpha}a_{2\alpha,2\alpha}\big)e_1\otimes   e_1 \otimes  e_1  \otimes 1.
 $$
 If $\mcL$ is a Lie algebra, this implies that  $b_{\alpha}^{(1)}=-a_{\alpha,\alpha}a_{2\alpha,2\alpha}$. 
 The same computation proves $b_{\alpha}^{(i)}=-a_{\alpha,\alpha}a_{2\alpha,2\alpha}$ for all $i=1,2,3$, 
 when we \emph{permute the indices}, i.e., for $x_i=z_j=e_1$ and $x_j=z_i=e_2$ for any $j\ne i$, $j\in\{1,2,3\}$ and the same $y$;
  in particular $b_{\alpha}^{(i)}$ does not depend on the superindex $i$.
 
 Furthermore, these conditions $b_{\alpha}^{(i)}=-a_{\alpha,\alpha}a_{2\alpha,2\alpha}$ for any $i\ne 4$ are not only necessary but sufficient to guarantee the identity $ (J_{\alpha,\alpha,2\alpha})$, due to the preliminary knowledge of the $\ZZ_3^2$-grading on $\e_8$, with similar arguments to those ones used in  the above models (see Remark~\ref{re_haysol}).

 We have checked the identity only for   $\alpha=(\bar1,\bar0)$. For the remaining values of $\alpha\in\ZZ_3^2$ ($\alpha\ne(\bar0,\bar0)$), 
   some concrete choices of elements will be particularly useful.  
 \begin{itemize}
  \item[(a)] There are $x,z\in V$, $y\in\bigwedge^2V$ such that
  $$
  [x,y](z)=(z*x)*y=x,\qquad (x,y)=0.
  $$
  In fact, $x=e_1$, $z=e_2$, $y=e_{31}$ provides a solution.  (Always in mind Lemma~\ref{le_notacionesyproductos}.)
   \item[(b)] There are $x,z\in V$, $y\in\bigwedge^2V$ such that
  $$
  (x,y)z=-(z*x)*y=z,\qquad (z,y)=0.
  $$
  In fact, $x=e_2$, $z=e_1$, $y=e_{31}$ provides a solution.  
    \item[(c)] There are $x,z\in \bigwedge^2V$, $y\in V$ such that
  $$
  [x,y](z)=(z*x)*y=x,\qquad (x,y)=0.
  $$
  In fact, $x=e_{12}$, $z=e_{23}$, $y=e_1$ gives a solution. 
  
       \item[(d)] There are $x,z\in \bigwedge^2V$, $y\in V$ such that
  $$
   (x,y)z=-(z*x)*y=z,\qquad (z,y)=0.
  $$
  In fact, $x=e_{23}$, $z=e_{12}$, $y=e_1$ is a solution. 
   \end{itemize}

With these elements, it is not difficult to prove  that necessarily $b_{\alpha}^{(i)}+a_{\alpha,\alpha}a_{2\alpha,2\alpha}=0$ for any $i\ne k_\alpha$. 
 In fact, let $j$ and $k$ be   the two indices in $\{1,2,3,4\}$ different from $i$ and $k_\alpha$. Take $x_{k_\alpha}=y_{k_\alpha}=z_{k_\alpha}=1$. 
Take $\{x_i,y_i,z_i\}$ as in item (a) if $\alpha_i=1$ and as in item (c) if   $\alpha_i=2$.
Take $\{x_j,y_j,z_j\}$  as in item (b) if $\alpha_j=1$ and as in item (d) if   $\alpha_j=2$.
  Similarly, take $\{x_k,y_k,z_k\}$  as in item (b) if $\alpha_k=1$ and as in item (d) if   $\alpha_k=2$. 
Now $[[y,z],x]=0$, and $\otimes_{t=1}^4 (z_t*x_t)*y_t$ coincides with $1  \otimes x_i  \otimes (-z_j)  \otimes (-z_k)$ but suitably ordered,  which in particular is nonzero and coincides with $1  \otimes [x_i,y_i]_i(z_i)  \otimes (x_j,y_j)z_j  \otimes (x_k,y_k)z_k$, also suitably ordered. Hence   
 $$
 J(x,y,z)=\big (b_{\alpha}^{(i)}  +a_{\alpha,\alpha}a_{2\alpha,2\alpha}\big)\big(\otimes_{t=1}^4 (z_t*x_t)*y_t \big),
 $$
 getting  the required equations.
 Of course, the equations $-a_{\alpha,\alpha}a_{2\alpha,2\alpha}=b_{\alpha}^{(i)}=b_{\alpha}^{(j)}=b_{\alpha}^{(k)}$ are sufficient to get that $J(x,y,z)=0$ for all $x,z\in\mcL_{\alpha}$, $y\in\mcL_{2\alpha}$.
 Moreover, these equations evidently imply $b_{\alpha}^{(i)}=b_{2\alpha}^{(i)}$.
 \medskip
 
 \noindent
 $ \boxed{ (J_{\alpha,\alpha,\beta})}$ with $\beta\ne \alpha,2\alpha$. Take for instance $\alpha=(\bar1,\bar0)$ and $\beta=(\bar0,\bar1)$.
 So we can assume that the elements $x,y\in\mcL_\alpha $ and $z\in  \mcL_{\beta}$
 are $x=u_1\otimes u_2\otimes u_3\otimes 1$, $y=v_1\otimes v_2\otimes v_3\otimes 1$ and $z=1\otimes w_2\otimes \eta_3\otimes w_4$, for some $u_i,v_i,w_i\in V$ and $\eta_i\in \bigwedge^2V$.
 Trivially we have
 $$
 \begin{array}{l}
 \,[[x,y],z]=a_{\alpha,\alpha}a_{2\alpha, \beta} \,u_1*v_1\otimes (u_2*v_2)*w_2\otimes  (u_3*v_3)*\eta_3\otimes w_4,\\
  \,[[y,z],x]=a_{\alpha, \beta}a_{\alpha, \alpha+\beta}\, v_1*u_1\otimes (v_2*w_2)*u_2\otimes  (v_3*\eta_3 )*u_3\otimes w_4,\\
   \,[[z,x],y]=a_{\alpha, \beta}a_{\alpha, \alpha+\beta}\, u_1*v_1\otimes (w_2*u_2)*v_2\otimes  (\eta_3*u_3)*v_3\otimes w_4.
 \end{array}
 $$
 First take into account   that $ (u*v)*w=\phi(u\wedge v\wedge w)$, so that 
\begin{equation*}\label{eq_asociativo3}
 (u*v)*w= (v*w)*u=(w*u)*v, 
\end{equation*} 
 and
 \begin{multline*}
 J(x,y,z)=u_1*v_1\otimes (u_2*v_2)*w_2\otimes  \big(a_{\alpha,\alpha}a_{2\alpha, \beta}  (u_3*v_3)*\eta_3+\\
 +a_{\alpha, \beta}a_{\alpha, \alpha+\beta}  (-(v_3*\eta_3 )*u_3+(\eta_3*u_3)*v_3) \big) \otimes w_4.
\end{multline*}
This is identically zero if and only if
\begin{equation}\label{re_tipo2}
a_{\alpha,\alpha}a_{2\alpha, \beta}  (u_3*v_3)*\eta_3
 +a_{\alpha, \beta}a_{\alpha, \alpha+\beta}  \big(-(v_3*\eta_3 )*u_3+(\eta_3*u_3)*v_3 \big) =0.
\end{equation}
 Second, observe $(u\wedge v)\,\llcorner\,\tilde \eta=\tilde \eta(u)v-\tilde \eta(v)u $ for any $u,v\in V$ and $\eta\in\bigwedge^2V$, according to Lemma~\ref{le_notacionesyproductos} or to \cite[Exercise B.15]{FultonHarris}. So,
\begin{equation}\label{formula}
 (u*v)*\eta=-(v*\eta)*u+( \eta*u)*v,
 \end{equation}
 and Eq.~\eqref{re_tipo2} becomes
 $\big(a_{\alpha,\alpha}a_{2\alpha, \beta}+a_{\alpha, \beta}a_{\alpha, \alpha+\beta} \big)  (u_3*v_3)*\eta_3=0$. This always holds if and only if 
 $$
 a_{\alpha,\alpha}a_{2\alpha, \beta}+a_{\alpha, \beta}a_{\alpha, \alpha+\beta}=0.
 $$
 
In general, for any  choice of $\alpha,\beta\in\mathbb Z_3^2$, $\beta\ne\pm\alpha$, there is  just one index $i\in\{1,2,3,4\}$ such that $\alpha_i=\beta_i\in\{ 1, 2\}$, and there is just one index $j\ne k_\alpha,k_\beta$
 such that $\alpha_j\ne \beta_j$ (both different from $ 0$, so $\{\alpha_j, \beta_j\}=\{1,2\}$). If $\alpha_i= 1$, the situation is completely analogous to the above, replacing $j$ with the index $3$ and $i$ with the index $2$. In order to deal with the case $\alpha_i= 2$, we have to take into consideration  
 \begin{align}
 (\mu*\nu)*\eta= (\nu*\eta)*\mu=(\eta*\mu)*\nu, \label{eq_asociativo2}\\
 (\mu*\eta)*w=-(\eta*w)*\mu+(w*\mu)*\eta, \label{formula2}
 \end{align}
 for any $w\in V$ and $\mu,\nu,\eta\in \bigwedge^2V$. Equation \eqref{eq_asociativo2} follows from  $(e_{ij}*e_{jk})*e_{ki}=1$, and from $(e_{ji}*e_{jk})*e_{jl}=0$, for any $\{i,j,k\}$ a permutation of $\{1,2,3\}$ and $l\ne j$, again by Lemma~\ref{le_notacionesyproductos}. 
 Equation \eqref{formula2} is trivial if $\mu=\eta$, since both sides vanish.  Otherwise we can assume $\mu=e_{ij}$ and $\eta=e_{ik}$ for 
  $\sigma=\{i,j,k\}$ a permutation of $\{1,2,3\}$. Then $\mu*\eta=\sg(\sigma) e_i$. If $w=e_i$, both sides vanish; if $w=e_j$, both sides equal $\sg( \sigma) e_{ij}$; and if $w=e_k$, both sides are equal to $\sg(\sigma)e_{ik}.$
  \medskip
 
 \noindent
 $ \boxed{ (J_{\alpha,\beta,\alpha+\beta})}$
 Take for instance $\alpha=(\bar1,\bar0)$ and $\beta=(\bar0,\bar1)$.
 So we can assume that the elements $x\in\mcL_\alpha $, $y\in  \mcL_{\beta}$, $z\in \mcL_{(\bar1,\bar1)}$
 are $x=u_1\otimes u_2\otimes u_3\otimes 1$,   $y=1\otimes v_2\otimes \eta_3\otimes v_4$, and 
 $z=w_1\otimes \eta_2\otimes 1\otimes w_4$,
 for some $u_i,v_i,w_i\in V$ and $\eta_i\in \bigwedge^2V$.
 Trivially we have
 $$
 \begin{array}{l}
 \,[[x,y],z]=a_{\alpha,\beta}a_{\alpha+\beta, \alpha+\beta} \,u_1*w_1\otimes (u_2*v_2)*\eta_2\otimes  u_3*\eta_3\otimes v_4*w_4,\\
  \,[[y,z],x]=a_{\beta,\alpha+ \beta}a_{ \alpha+2\beta,\alpha}\, w_1*u_1\otimes (v_2*\eta_2)*u_2\otimes   \eta_3 *u_3\otimes v_4*w_4,\\
   \,[[z,x],y]=a_{\alpha, \alpha+\beta}a_{ \beta+2\alpha,\beta}\, w_1*u_1\otimes (\eta_2*u_2)*v_2\otimes  u_3*\eta_3\otimes w_4*v_4,
 \end{array}
$$
which gives, by \eqref{comoda},
\begin{multline*}
 J(x,y,z)= u_1*w_1\otimes \Big (a_{\alpha,\beta}a_{\alpha+\beta, \alpha+\beta}(u_2*v_2)*\eta_2\\
 -a_{\beta,\alpha+ \beta}a_{ \alpha+2\beta,\alpha}(v_2*\eta_2)*u_2
 +a_{\alpha, \alpha+\beta}a_{ \beta+2\alpha,\beta}(\eta_2*u_2)*v_2
 \Big)\otimes  u_3*\eta_3\otimes v_4*w_4.
\end{multline*}
This is identically zero if and only if, 
$$
-\big(a_{\alpha,\beta}a_{\alpha+\beta, \alpha+\beta}+a_{\beta,\alpha+ \beta}a_{ \alpha+2\beta,\alpha} \big)(v_2*\eta_2)*u_2+
\big(a_{\alpha,\beta}a_{\alpha+\beta, \alpha+\beta}+a_{\alpha, \alpha+\beta}a_{ \beta+2\alpha,\beta}\big)(\eta_2*u_2)*v_2=0,
$$
by \eqref{formula}. And of course this holds just when
$$
-a_{\alpha,\beta}a_{\alpha+\beta, \alpha+\beta}=a_{\beta,\alpha+ \beta}a_{ \alpha+2\beta,\alpha} =a_{\alpha, \alpha+\beta}a_{ \beta+2\alpha,\beta}.
$$
 The second identity does not give any extra information, if we swap the roles of $\alpha$ and $\beta$.
 
 What happens for another choice of $\alpha$ and $\beta$? As mentioned above,
  there is  just one index $i\in\{1,2,3,4\}$ such that $\alpha_i=\beta_i\in\{ 1, 2\}$, and there is just one index $j\ne k_\alpha,k_\beta$
 such that $\alpha_j\ne \beta_j$ (both different from $ 0$). Now $  k_\alpha $ and $k_\beta$ play the role of the indices 1 and 4 (it does not mind if its value is either $ 1$ or $ 2$, since in both cases $*$ is skew-symmetric (see Eq.~\eqref{comoda}). The index $j$ plays the role of the index $3$ in the above example, so that we use again $u*\eta=\eta*u$. And the index $i$ plays the role of the index $2$ in the   example. If $\alpha_i= 1$, the above computations work without any change; while if 
 $\alpha_i= 2$, the identity we need is not more
   \eqref{formula}  but  \eqref{formula2}.
  \smallskip

 \noindent
 $ \boxed{ (J_{\alpha,2\alpha,\beta})}$
 Again we begin with  $\alpha=(\bar1,\bar0)$ and $\beta=(\bar0,\bar1)$, to have an example to figure out how to deal with the remaining 23 cases.
 Take  $x=u_1\otimes u_2\otimes u_3\otimes 1\in\mcL_\alpha$,   $y=1\otimes v_2\otimes \eta_3\otimes v_4\in  \mcL_{\beta}$, and 
 $z=\mu_1\otimes \mu_2\otimes \mu_3\otimes 1\in\mcL_{2\alpha}$,
 for some $u_i,v_i \in V$ and $\mu_i,\eta_i\in \bigwedge^2V$.
 Recalling, from the above cases, that the scalars $b_{\alpha}^{(i)}$ do not depend on $i$, we write
 $$
 \begin{array}{ll}
 \,[[x,y],z]=&a_{\alpha,\beta}a_{\alpha+\beta, 2\alpha } \,u_1*\mu_1\otimes (u_2*v_2)*\mu_2\otimes  (u_3*\eta_3)*\mu_3\otimes v_4,\\
  \,[[y,z],x]=&a_{\beta,2\alpha }a_{ 2\alpha+\beta,\alpha}\, \mu_1*u_1\otimes (v_2*\mu_2)*u_2\otimes   (\eta_3 *\mu_3)*u_3\otimes v_4,\\
   \,[[z,x],y]=&b_{\alpha}^{(1)} \, \big(  (\mu_1,u_1)1\otimes [\mu_2,u_2]_2(v_2)\otimes  (\mu_3 , u_3)\eta_3\otimes  v_4 +\\
   &\qquad\qquad    + (\mu_1,u_1)1\otimes (\mu_2,u_2)v_2\otimes  [\mu_3 , u_3]_3\cdot\eta_3\otimes  v_4 \big).
 \end{array}
$$
Since $u_1*\mu_1=(u_1,\mu_1)=\mu_1*u_1$,   we focus on the elements in positions $2$ and $3$ of the tensor product. If we choose (as in item (b)),
$$
(u_2,v_2,\mu_2)=(e_2,e_1,e_{31})\Rightarrow (u_2*v_2)*\mu_2=e_1,\ (v_2*\mu_2)*u_2=0,\  (\mu_2,u_2)v_2=e_1,
$$
 and (as in item (d)),
 $$
 (\eta_3,\mu_3,u_3)=(e_{23},e_{12},e_1)\Rightarrow (u_3*\eta_3)*\mu_3=e_{12},\ (\mu_3 , u_3)=0,\ [\mu_3 , u_3]_3\cdot\eta_3=e_{12},
 $$
 then
 $$
 J(x,y,z)=\big(a_{\alpha,\beta}a_{\alpha+\beta, 2\alpha }+b_{\alpha}^{(1)}  \big)\,u_1*\mu_1\otimes e_1\otimes  e_{12}\otimes v_4,
 $$
 and the Jacobi identity implies 
 \begin{equation}\label{la4}
 a_{\alpha,\beta}a_{\alpha+\beta, 2\alpha }+b_{\alpha}^{(1)}  =0.
 \end{equation}
  The identity 
 $a_{\beta,2\alpha }a_{ 2\alpha+\beta,\alpha}+b_{\alpha}^{(1)}  =0$ is achieved as
 a direct consequence of \eqref{la4}
 by replacing $\alpha$ with $2\alpha$ (recall that $a_{\alpha,\beta}=a_{\beta,\alpha}$ and $b_{\alpha}^{(1)}=b_{2\alpha}^{(1)}$). Of course, the two equations are sufficient to get $ (J_{\alpha,2\alpha,\beta})$. For other values of $\alpha$ and $\beta$, we proceed as in the above cases, exploiting the symmetry.
 
  \medskip
 \noindent
 $ \boxed{ (J_{\alpha,\beta,2(\alpha+\beta)})}$
 Until now we have proved that all the conditions in Eq.~\eqref{caso4_0} are necessary. Let us check that they are sufficient since this new identity holds too.
  For any $x\in\mcL_\alpha$, $y\in \mcL_{\beta}$, and $z\in\mcL_{2(\alpha+\beta)}$, the three expressions
$$
 \frac{[[x,y],z]}{a_{\alpha,\beta}b_{\alpha+\beta  }^{(1)} }\,,\quad
  \frac{ [[y,z],x]}{a_{\beta,2\alpha+2\beta}b_{2\alpha   }^{(1)}  }\,,\quad
  \frac{ [[z,x],y]}{a_{\alpha,2\alpha+2\beta}b_{2\beta  }^{(1)}}\,,
$$
 do not depend on the scalars.
 Observe that
 $$
a_{\alpha,  2(\alpha+ \beta)}b_{ \beta }^{(i)} \,\stackrel{\eqref{la4}}=\, -a_{\alpha,  2(\alpha+ \beta)}a_{\beta,\alpha}a_{\beta+\alpha,2\beta}
=-a_{\alpha,\beta}a_{\beta+\alpha,2\beta}a_{\alpha,  2(\alpha+ \beta)}
 \,\stackrel{\eqref{la4}}=\, a_{\alpha,   \beta}b_{\alpha+\beta}^{(i)}.
 $$
 Changing the role of $\alpha$ and $\beta$, and recalling  $b_{ \beta }^{(i)}=b_{ 2\beta }^{(i)}$  , we also have 
 $$
  {a_{\alpha,\beta}b_{\alpha+\beta  }^{(1)} }=
   {a_{\beta,2\alpha+2\beta}b_{2\alpha   }^{(1)}  }=
  {a_{\alpha,2\alpha+2\beta}b_{2\beta  }^{(1)}},
$$
and so $\frac{J(x,y,z)}{ {a_{\alpha,\beta}b_{\alpha+\beta  }^{(1)} }}$ has to annihilate and hence $\mcL$ is a Lie algebra. 
 \smallskip
 
 It is besides evident that the set of scalars  in  Eq.~\eqref{solcaso4} provides a solution. 
 
  The fact that the obtained Lie algebra is isomorphic to $\e_8$ is a direct consequence of its simplicity. 
  And this simplicity  comes from the fact that all the chosen scalars are nonzero. 
  We can provide a proof by adapting the arguments in Remark~\ref{re_simplicidad}.
  Suppose there were   an ideal $I$ of $\mcL$ without any homogeneous element.
  Denote by $\pi_\alpha\colon\mcL\to\mcL_{\alpha}$ the projection given by the decomposition \eqref{ecumodel_Z32}.
  First, for any $x  \in I$,   we find $x' \in I$ with $\pi_{(\bar0,\bar0)}(x')=0$ and such that  $\pi_{\alpha}(x')\ne0$ if and only if $\pi_{\alpha}(x)\ne0$ for any $\alpha$ different from the neutral element.  
 And second, for any $0\ne   x\in I$ with $\pi_{(\bar0,\bar0)}(x)=0$,  we find $x' \in I$ with $\pi_{(\bar0,\bar0)}(x')\ne0$  such that the number of $\alpha$'s (including $(\bar0,\bar0) $) with $\pi_{\alpha}(x')\ne0$ is at most  the number of $\alpha$'s  with $\pi_{\alpha}(x)\ne0$.
  The first step is achieved by multiplying alternatively with two convenient copies of $\mathfrak{sl}(V)$, while for the second step we only have    to multiply with one homogeneous component. This leads to contradiction. 
  Next, assume we have an  ideal $I$ with   $0\ne  \mcL_\alpha\cap I$ for some $\alpha\ne(\bar0,\bar0)$. As $\mcL_\alpha$ is an irreducible 
  $\mcL_{(\bar0,\bar0)}$-module, then $\mcL_\alpha\subset I$ 
  and we   deduce $I=\mcL$.  It is easy to get the same conclusion if $0\ne  \mcL_{(\bar0,\bar0)}\cap I$.
 \end{proof}

 
  \subsection{Model based on the subalgebra of type $2A_1\oplus 2A_3$}\label{se_2A12A3}
  
  Our last model will be based on the simultaneous diagonalization relative to   two commuting automorphisms $F$ and $G$, with $F$ of order two (relative to the 8th node) and $G$ of order four    (relative to the 6th node), as in Section~\ref{se_gradings}. The $\ZZ_2$-grading induced by $F$ is (up to isomorphism) 
  $$
    \mcL_{\bar0}=\slf(U)\oplus\e_7 , \qquad
   \mcL_{\bar1}=U\otimes V(\varpi_7);
  $$
  where $U$ is a 2-dimensional vector space and $ V(\varpi_7)$ is the $\e_7$-irreducible module of dimension 56. 
  Now apply Section~\ref{se_gradings} to $\mathcal{M}=\e_7$ to describe its $\ZZ_4$-grading as
  $$
  \begin{array}{ll}
  \mathcal{M}_{\bar0}=\slf(U)\oplus\slf(W)\oplus\slf(W),\qquad\quad&
  \mathcal{M}_{\bar2}=\FF\otimes\bigwedge^2W\otimes\bigwedge^2W,\\
  \mathcal{M}_{\bar1}=U\otimes W\otimes W,
  &\mathcal{M}_{\bar3}=U\otimes\bigwedge^3W\otimes\bigwedge^3W,
  \end{array}
  $$
  for $W$ a 4-dimensional vector space. (Here only the dimensions are not enough to distinguish  at a first glance whether the module $\mathcal{M}_{\bar1}$ -which determines the others- is either the above or  $U\otimes W\otimes \bigwedge^3W$. Note that this is not a problem: $W$ and its dual module $\bigwedge^3W$ are not isomorphic but there is an -outer- automorphism of $\slf(W)$ interchanging them. This means that we can recover the Lie algebra $\mathcal M$ in both ways.)
 According to \cite{modsgrad}, the module $\mathcal{V}= V(\varpi_7)$ is compatible with the $\ZZ_4$-grading on $\e_7$, 
 that is, $\mathcal{M}_{\bar i}\cdot \mathcal{V}_{\bar j}\subset \mathcal{V}_{\bar i+\bar j}$, for
 $$
   \begin{array}{ll}
  \mathcal{V}_{\bar0}=U\otimes \bigwedge^2W \otimes \FF,\qquad\quad&
  \mathcal{V}_{\bar1}=\FF\otimes\bigwedge^3W\otimes W,\\
  \mathcal{V}_{\bar2}=U\otimes \FF\otimes \bigwedge^2W,
  &\mathcal{V}_{\bar3}=\FF\otimes W\otimes\bigwedge^3W.
  \end{array}
  $$
  Again, shifts are possible (but, once $\mathcal{V}_{\bar0}$ is fixed, all is determined). 
  Gathering the information, we have the descriptions  (descriptions as modules for the neutral component) of all the homogeneous components of a $\ZZ_2\times\ZZ_4$-grading on the split algebra $\e_8$ with neutral homogeneous component isomorphic to $2\slf(U)\oplus2\slf(W)$. This is the starting point for the model given in the next proposition.  
  
 \begin{proposition} \label{teo_Z24model}
 
 Take $U$ and $W$     vector spaces  over $\FF$ of dimensions $2$ and $4$ respectively, and let $\mcL$ be the $\ZZ_2\times\ZZ_4$-graded vector space given, for any $i=0,1$ and $j=0,1,2,3$,  by \vspace{-5pt}
\begin{equation}\label{ecumodel_Z24}
\begin{array}{ll}
\mcL_{(\bar 0,\bar 0)}&=\mathfrak{sl}(U)\oplus\mathfrak{sl}(U)\oplus\mathfrak{sl}(W)\oplus\mathfrak{sl}(W),\\
\mcL_{(\bar i,\bar j)}&=\bigwedge^{[i]_2}U\otimes\bigwedge^{[i+j ]_2}U\otimes\bigwedge^{[2i+j ]_4}W\otimes\bigwedge^{[j ]_4}W.
\end{array}
\end{equation}
  That is,  for     $(\bar 0,\bar 0)\ne\alpha=(\bar i,\bar j)  $,  we write $\mcL_\alpha=\bigwedge^{\alpha_1}U\otimes\bigwedge^{\alpha_2}U\otimes\bigwedge^{\alpha_3}W\otimes\bigwedge^{\alpha_4}W$ for $\alpha_1=[i]_2$, $\alpha_2=[i+j]_2$, $\alpha_3=[2i+j]_4$, $\alpha_4=[j]_4$. Denote by $I_\alpha=\{k\in\{1,2,3,4\}:\alpha_k=0\}$.

Take the product of $\mcL_{(\bar 0,\bar 0)}$ with $\mcL_{(\bar i,\bar j)}$ given by the action of the $k$th ideal of $\mcL_{\bar0}$ on the $k$th slot ($k=1,2,3,4$). Order $(\ZZ_2 \times \ZZ_4,\prec)$ lexicographically, and consider, for   $(\bar 0,\bar 0)\ne\alpha,\beta\in\ZZ_2 \times \ZZ_4$, $\alpha\prec\beta$, the   product of $x=x_1\otimes x_2\otimes x_3\otimes x_4\in\mcL_{\alpha}$ and  $y=y_1\otimes y_2\otimes y_3\otimes y_4\in\mcL_{\beta}$ given by \vspace{-9pt}

\begin{equation}\label{eq_Z4Z2}
[x,y]=\left\{
\begin{array}{ll}
a_{\alpha,\beta}\,x_1*y_1\otimes x_2*y_2\otimes x_3*y_3\otimes x_4*y_4&\textrm{if }\beta+\alpha\ne (\bar 0,\bar 0), \\
\sum_{i=1,i\notin I_\alpha}^4 b_\alpha^{(i)}[x_i,y_i]_i\Pi_{j=1,j\ne i}^4(x_{j},y_{j})  &\textrm{if }\beta+\alpha= (\bar 0,\bar 0), 
\end{array}\right.
\end{equation}
for some nonzero scalars $a_{\alpha,\beta}$ and $b_\alpha^{(i)}$. 
Extend the bracket to $\mcL$ making that the product of $\mcL_{\alpha}$ with $\mcL_{\beta}$  is skew-symmetric if $\beta\prec\alpha$, and making $(\mcL_{(\bar0,\bar 0)},[\ ,\ ])$ a subalgebra.
Then, $\mcL$ endowed with the  product $[\ ,\ ]$   is a  Lie algebra if and only if 

\begin{equation}\label{paciencia} 
\begin{array}{l}
b_{1}^{(2)}=b_{1}^{(3)}=b_{1}^{(4)}=-a_{11}a_{23}=  a'_{10}a'_{31},\\
 b_{2}^{(3)}=b_{2}^{(4)}=-a_{12}a_{23},\\
   a_{11}a'_{20}=a'_{10}a'_{11}, \\
  a_{11}a'_{21}=-a'_{11}a'_{12}, \\
   a_{11}a'_{22}=a'_{12}a'_{13}, \\
   a_{11}a'_{23}=-a'_{10}a'_{13}, \\
    a_{12}a_{33}=a_{11}a_{23},\\
    a_{12}a'_{30}=a'_{12}a'_{20}, \\
    a_{12}a'_{31}=-a'_{13}a'_{21}, \\
  \end{array}\qquad
 \begin{array}{l}
 a_{12}a'_{32}=a'_{10}a'_{22}, \\
            a_{12}a'_{33}=-a'_{11}a'_{23}, \\
             a'_{10}a''_{11}= a'_{11}a''_{02}=-a_{11}a''_{01} ,\\
              a'_{10}a''_{12}=-a_{12}a''_{02}=-a'_{12}a''_{03}, \\
 b_{0}^{'(1)}=b_{0}^{'(2)}=b_{0}^{'(3)}=a'_{10}a''_{01}, \\
 b_{1}^{'(1)}=b_{1}^{'(3)}=b_{1}^{'(4)}=a'_{31}a''_{03}=a'_{11}a''_{23},\\
 b_{2}^{'(1)}=  b_{2}^{'(2)}= b_{2}^{'(4)}=-a''_{12}a'_{32},\\
        a'_{12}a''_{33}=-a''_{23}a_{11}.  
                \end{array}
\end{equation}
To abbreviate the notation a bit, we have used 
$$a_{ij}:=a_{(\bar0,\bar i),(\bar0.\bar j)},\quad a'_{ij}:=a_{(\bar 0,\bar i),(\bar1.\bar j)},\quad a''_{ij}:=a_{(\bar1,\bar i),(\bar1.\bar j)},
\quad 
b_i^{(s)}:=b_{(\bar0,\bar i)}^{(s)},\quad b_i^{'(s)}:=b_{(\bar1,\bar i)}^{(s)}.$$\vspace{-10pt}

Moreover, $(\mcL, [\ ,\ ])$ is a simple exceptional Lie algebra isomorphic to $\e_8$. 
A solution of this system of  equations that, in particular, gives a model of $\e_8$,  is, for instance,
\begin{equation}\label{dif_unasol}
\begin{array}{c}
1=a_{11}=a_{12}=a'_{10}=a'_{11}=a'_{12}=a'_{13}=a'_{20}=a'_{22 }=a'_{30}\,,\vspace{2pt}\\
1=a'_{31}=a'_{32}=a'_{33}=a''_{01 }=a''_{12}=a''_{33}=b_1^{(i)}= b_2^{(i)}=b_0^{'(i)} \, ,\vspace{1pt}\\
-1=a_{23}=a'_{21}=a'_{23}=a_{33}=a''_{02}=a''_{03}=a''_{11}=a''_{23}=b_1^{'(i)}= b_2^{'(i)}.
\end{array}
\end{equation}
 \end{proposition}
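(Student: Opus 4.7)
The plan is to follow the same template as Propositions \ref{teo_Z5model}, \ref{teo_Z4model}, \ref{teo_Z6model} and \ref{teo_Z32model}, adapted to the grading group $G=\ZZ_2\times\ZZ_4$. First I would verify that the skew-symmetric extension of the bracket is consistent on $\mcL_\alpha\times\mcL_\alpha$: for $\alpha\ne(\bar0,\bar0)$, exactly the slots $i\notin I_\alpha$ contribute to the $(x,y)=\alpha+\alpha$ case, so I would check that either all non-trivial tensorands satisfy $x_i*y_i=(-1)^?y_i*x_i$ with the right parity (using Lemma \ref{le_notacionesyproductos}(d) and the facts collected in Eq.~\eqref{comoda} and Remark \ref{re_dim2}), or, when $\alpha+\alpha=(\bar0,\bar0)$, that the defining bracket is itself antisymmetric in $(x,y)$. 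This forces a preliminary set of identifications $b_\alpha^{(i)}=b_{-\alpha}^{(i)}$ which I would impose throughout.

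Next, by Remark \ref{remark:slv} and trilinearity of the Jacobian $J$, the Lie condition reduces to $(J_{\alpha,\beta,\gamma})\colon J(\mcL_\alpha,\mcL_\beta,\mcL_\gamma)=0$ for $\alpha,\beta,\gamma\in G\setminus\{(\bar0,\bar0)\}$. I would then classify these triples up to permutation by how the three elements sit in $G$: repeated index ($\alpha=\beta$, $\gamma$ arbitrary with or without $\gamma=-2\alpha$), three distinct elements summing to zero ($\gamma=-\alpha-\beta$), and three distinct elements not summing to zero (so that $\gamma=\pm2\alpha+\epsilon\beta$ for suitable signs). This gives a finite list of \emph{equation types}, each producing a relation among the $a_{\alpha,\beta}$'s and $b_\alpha^{(i)}$'s exactly as in Sections \ref{se_2A4}--\ref{se_4A2}.

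For each type I would plug in basis elements $e_i\in U$, $e_i\in W$ chosen so that only one summand of $J(x,y,z)$ survives in a prescribed slot, and read off a linear relation among two monomials $a_{\alpha,\beta}a_{\alpha+\beta,\gamma}$ (and, in the case $\beta=-\alpha$, a monomial $a_{?,?}\,b_\alpha^{(i)}$). This yields the necessity of the full list \eqref{paciencia}. Conversely, assuming \eqref{paciencia} and using the \emph{existence-of-solutions} trick of Remark \ref{re_haysol} -- namely that $\e_8$ itself carries this grading, so \emph{some} tuple of nonzero scalars works -- I would observe that for each triple $(\alpha,\beta,\gamma)$ the quotient $J(x,y,z)/a_{\alpha,\beta}a_{\alpha+\beta,\gamma}$ is a universal expression in $*,\sim,\llcorner,[\,,\,]$ not involving any of the scalars; hence its vanishing is equation-free and follows from the single known solution. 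This makes the sufficiency of \eqref{paciencia} essentially automatic once the relations are in place to identify such quotients across the three cyclic rewrites of $J(x,y,z)$. I expect the main obstacle to be purely bookkeeping: with $|G|=8$ the number of Jacobi triples is large, and a systematic table (analogous to Table~\ref{tabla:caso3}) will be needed to organize the necessary identities and to confirm that \eqref{dif_unasol} indeed solves all of them.

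Finally, once $\mcL$ is known to be a Lie algebra, I would prove that different admissible tuples of scalars give isomorphic graded algebras by defining $f\colon\mcL\to\mcL'$ as $f|_{\mcL_\alpha}=\lambda_\alpha\,\id_{\mcL_\alpha}$, choosing the $\lambda_\alpha\in\FF^\times$ recursively so that $\lambda_\alpha\lambda_\beta/\lambda_{\alpha+\beta}$ matches the ratio of the two sets of scalars (as in \eqref{eq_simplicidad1} and the isomorphism paragraph of the proof of Proposition~\ref{teo_Z32model}); the system has a solution because $G$ is finitely generated and one $\lambda_\alpha$ can be fixed freely per generator after taking an appropriate root. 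Since one solution yields $\e_8$, all of them do. Simplicity follows verbatim from the argument in Remark~\ref{re_simplicidad} adapted to the current decomposition: any nonzero ideal $I$ must contain some nonzero homogeneous element (by alternately multiplying by the faithful actions of the four ideals of $\mcL_{(\bar0,\bar0)}$), and then irreducibility of each $\mcL_\alpha$ as an $\mcL_{(\bar0,\bar0)}$-module together with the non-vanishing of all products $[\mcL_\alpha,\mcL_\beta]$ (guaranteed by the non-vanishing of the scalars) forces $I=\mcL$.
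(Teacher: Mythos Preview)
Your plan is correct and matches the paper's own proof essentially step for step: verify well-definedness of the skew-symmetric extension on each $\mcL_\alpha$, derive the necessary conditions \eqref{paciencia} by evaluating $J(x,y,z)$ on well-chosen basis tensors, invoke the existence-of-solutions trick (Remark~\ref{re_haysol}) to get sufficiency once a systematic table of Jacobi triples is compiled, and finish with a diagonal rescaling isomorphism and the simplicity argument of Remark~\ref{re_simplicidad}. The only minor discrepancy is that in this proposition the bracket is defined only for $\alpha\prec\beta$ and extended by skew-symmetry, so there is just one $b_\alpha^{(i)}$ per opposite pair rather than a separate $b_{-\alpha}^{(i)}$ to be identified; also, the paper makes explicit that the solution set of \eqref{paciencia} is a $7$-parameter family (with $a_{11},a_{12},a'_{10},a'_{11},a'_{12},a'_{13},a''_{01}$ free), which both yields the concrete solution \eqref{dif_unasol} and streamlines the verification of the large table of Jacobi identities.
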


\begin{proof}
Choose
$\{e_1,e_2\}$ a basis of $U$ such that $\phi(e_{12})=1$ and $\{e_1,\dots,e_4\}$ a basis of $W$ such that $\phi(e_{1234})=1$. (The slight abuse of notation does not produce confusion.) Denote by $\textrm{pr}_i\colon\mcL_{(\bar 0,\bar 0)}\to\slf(U)$, $i=1,2$, and by $\textrm{pr}_i\colon\mcL_{(\bar 0,\bar 0)}\to\slf(W)$, $i=3,4$, the projections onto the four simple ideals of the neutral component, and use 
$\textrm{pr}_i(f)\equiv f_i$ if convenient.

First we will check that all the conditions in \eqref{paciencia} are necessary by imposing the Jacobi identity to concrete elements. 
\smallskip

$\star$ Take $x=1 \otimes e_1 \otimes e_1 \otimes e_1$, $\tilde x=1 \otimes e_1 \otimes e_1 \otimes e_2$,
$y=1 \otimes e_2 \otimes e_2 \otimes e_2$. We compute,
$$
\begin{array}{lll}
z=1 \otimes e_1 \otimes e_{134} \otimes e_{134}&\Rightarrow&  J(x,y,z) = -\big(a_{11}a_{23}+(\frac{1}{2}b_{1}^{(2)}+\frac{1}{4}b_{1}^{(3)}+\frac{1}{4}b_{1}^{(4)})\big)\, x,\\
 &&J(\tilde x,y,z)=\big(-(\frac{1}{2}b_{1}^{(2)}+\frac{1}{4}b_{1}^{(3)}+\frac{1}{4}b_{1}^{(4)})+b_{1}^{(4)}\big)  \,\tilde x,\\
z=1 \otimes e_2 \otimes e_{134} \otimes e_{134}&\Rightarrow& J(x,y,z)=(-a_{11}a_{23}-b_{1}^{(2)}) 1 \otimes e_2 \otimes e_1 \otimes e_1;
\end{array}
$$
so we have \vspace{-1pt}
\begin{align}
   -a_{11}a_{23}=b_{1}^{(2)}=b_1^{(3)}=b_1^{(4)}=:b_1.  \label{caso2.7_6}
\end{align}
Moreover,
 $$
 J(x,1 \otimes e_2 \otimes e_{123} \otimes e_{234},e_1 \otimes e_2 \otimes e_{34} \otimes 1 )=(-b_{1}+a'_{10}a'_{31}) e_1 \otimes e_2 \otimes e_{13} \otimes 1, $$
 which gives  \vspace{-1pt}
  \begin{equation}\label{caso2.7_17}
    b_{1}=a'_{10}a'_{31}.
    \end{equation}
    
We continue with a few more calculations,
$$
\begin{array}{ll}
z=1 \otimes 1 \otimes e_{13} \otimes e_{34}&\Rightarrow\  \textrm{pr}_3(J(x,y,z ))=(-a_{11}b_{ 2}^{(3)}+a_{12}b_{1} ) e^4_1  ,\\
z=1 \otimes 1 \otimes e_{34} \otimes e_{13}&\Rightarrow\  \textrm{pr}_4(J(x,y,z )) =(-a_{11}b_{ 2}^{(4)}+a_{12}b_{1} ) e^4_1. 
 \end{array}
$$
This implies $a_{11}b_{ 2}^{(3)}=a_{12}b_{1} =a_{11}b_{ 2}^{(4)}$, so that we get
\begin{align}
  b_{ 2}^{(3)}=b_{ 2}^{(4)}:=b_2, \qquad b_{ 2}=-a_{12} a_{23}  , 
  \label{caso2.7_1} 
 \end{align}
 from $a_{11}b_{ 2} =a_{12}b_{1}= -a_{12}a_{11}a_{23} $.
 Carrying on with the same choice of $x$ and $y$,
 $$
 \begin{array}{ll}
 z=e_1 \otimes e_1 \otimes e_{34} \otimes 1&\Rightarrow\ J(x,y,z)=(-a_{11}a'_{20}+a'_{10}a'_{11})\, e_1 \otimes e_1 \otimes 1 \otimes e_{12},\\
z= e_1 \otimes 1 \otimes e_{134} \otimes e_4&\Rightarrow\ J(x,y,z)=(a_{11}a'_{21}+a'_{11}a'_{12})\, e_1 \otimes 1 \otimes e_1 \otimes e_{124},\\
 z=e_1 \otimes e_1 \otimes 1 \otimes e_{34}&\Rightarrow\  J(x,y,z)=(-a_{11}a'_{22}+a'_{12}a'_{13})\, e_1 \otimes e_1  \otimes e_{12} \otimes 1,\\
z=e_1 \otimes 1 \otimes  e_4 \otimes e_{134} &\Rightarrow\ J(x,y,z)=(a_{11}a'_{23}+a'_{13}a'_{10})\, e_1 \otimes 1 \otimes e_{124} \otimes e_1;
\end{array}
$$
so that we achieve
 \begin{equation}\label{new73}
a_{11}a'_{20}=a'_{10}a'_{11},\quad
a_{11}a'_{21}=-a'_{11}a'_{12},\quad
a_{11}a'_{22}=a'_{12}a'_{13},\quad
a_{11}a'_{23}=-a'_{13}a'_{10}.
 \end{equation}

$\star$ Next, take $x=1 \otimes e_1 \otimes e_1 \otimes e_1$ and  $y=1 \otimes 1 \otimes e_{23} \otimes e_{23}$. From
$$
\begin{array}{ll}
z=1 \otimes e_2 \otimes e_{134} \otimes e_{134}&\Rightarrow\ 
J(x,y,z)=(-a_{12}a_{33}+a_{23}a_{11}) 1 \otimes 1 \otimes e_{13} \otimes e_{13},\\
z=e_2 \otimes e_2 \otimes e_{14} \otimes 1&
\Rightarrow\  J(x,y,z)=(-a_{12}a'_{30}+a'_{20}a'_{12}) e_2 \otimes 1 \otimes e_{1} \otimes e_{123},\\
z=e_2 \otimes 1 \otimes e_{134} \otimes e_{4} &
\Rightarrow\   J(x,y,z)=(a_{12}a'_{31}+a'_{21}a'_{13}) e_2 \otimes e_1 \otimes e_{13} \otimes 1,\\
z=e_2 \otimes e_2  \otimes 1\otimes e_{14}&
\Rightarrow\  J(x,y,z)=(-a_{12}a'_{32}+a'_{22}a'_{10}) e_2 \otimes 1  \otimes e_{123}\otimes e_{1},\\
z=e_2 \otimes 1  \otimes e_{4}\otimes e_{134} &
\Rightarrow\   J(x,y,z)=(a_{12}a'_{33}+a'_{23}a'_{11}) e_2 \otimes e_1  \otimes 1\otimes e_{13};
\end{array}
$$
we follow
  \vspace{-7pt} \begin{align}\label{caso2.7_12}
   a_{12}a_{33}=a_{23}a_{11} ,\quad a_{12}a'_{30}=a'_{20}a'_{12},\quad a_{12}a'_{31}=-a'_{21}a'_{13}, \nonumber\\\quad
   a_{12}a'_{32}=a'_{22}a'_{10},\quad  a_{12}a'_{33}=-a'_{23}a'_{11} .  
\end{align}  
In particular,
\begin{equation}\label{auxilio}
\dfrac{a'_{32}}{a'_{31}}=\dfrac{a_{11}a_{12}a'_{32}}{a_{11}a_{12}a'_{31}}
=\dfrac{a_{11}a'_{22}a'_{10}}{-a_{11}a'_{21}a'_{13}}
=\dfrac{a'_{12}a'_{13}a'_{10}}{a'_{11}a'_{12}a'_{13}}=\dfrac{ a'_{10}}{a'_{11} }\  \Rightarrow  \ 
    b_{1}=a'_{10}a'_{31}=a'_{11}a'_{32}. 
\end{equation}
\vspace{1pt}

$\star$  Take $x=1 \otimes e_1 \otimes e_1 \otimes e_1$, $y= e_2 \otimes e_2 \otimes e_{23} \otimes 1$, and $ \tilde y= e_2 \otimes e_2 \otimes e_{12} \otimes 1 $. Then
$$
\begin{array}{ll}
  z=e_1 \otimes 1 \otimes e_{124} \otimes e_{2}  &\Rightarrow\    J(x,y,z)=  (a'_{10}a''_{11}+a_{11}a''_{01}) 1 \otimes 1 \otimes e_{12} \otimes e_{12} ,\\
 z=e_1 \otimes 1 \otimes e_{234} \otimes e_{2}  &\Rightarrow\    J(x,\tilde y,z)=-(a'_{11}a''_{02}+a_{11}a''_{01}) 1 \otimes 1 \otimes e_{12} \otimes e_{12},\\
 z=e_1 \otimes e_1 \otimes 1 \otimes e_{23}   &\Rightarrow\     J(x,y,z)=-(a'_{10}a''_{12}+a_{12}a''_{02}) 1 \otimes e_1 \otimes e_{123} \otimes e_{123},\\
 z=e_1 \otimes e_2 \otimes 1 \otimes e_{23}  &\Rightarrow\     J(x,y,z)=-(a'_{10}a''_{12}+a'_{12}a''_{03}) 1 \otimes e_2 \otimes e_{123} \otimes e_{123}.
\end{array}
$$
So we have
 \begin{equation}\label{caso2.7_22}
    a'_{10}a''_{11}=    a'_{11}a''_{02}=-a_{11}a''_{01},\qquad a'_{10}a''_{12}=-a_{12}a''_{02}=-a'_{12}a''_{03}.
\end{equation}
Moreover, evaluating the Jacobian operator for more choices of $z$, we get
$$
\begin{array}{ll}
 z=e_1 \otimes e_1 \otimes e_{34} \otimes 1&\Rightarrow\   J(x,y,z)=(-b_{0}^{'(3)}+a'_{10}a''_{01}) 1 \otimes e_1 \otimes e_{3} \otimes e_1, \\
 z=e_1 \otimes e_2 \otimes e_{14} \otimes 1  &\Rightarrow\    J(x,y,z)=(-b_{0}^{'(2)}+a'_{10}a''_{01}) 1 \otimes e_2 \otimes e_{1} \otimes e_1;
\end{array}
$$
thus the identities
$
 b_{0}^{'(3)}=b_{0}^{'(2)}=a'_{10}a''_{01}  
$
follow. Also,
$$
J(e_1 \otimes e_1 \otimes e_{12} \otimes 1,e_1 \otimes e_2 \otimes e_{34} \otimes 1,e_2 \otimes 1 \otimes e_{123} \otimes e_{4} )=(-b_{0}^{'(1)}+a'_{10}a''_{01}) e_1 \otimes 1 \otimes e_{123} \otimes e_{4},
$$
and gathering the information,
\begin{equation}\label{new77}
 a'_{10}a''_{01}=b_{0}^{'(3)}=b_{0}^{'(2)}=b_{0}^{'(1)}=:b'_0. 
 \end{equation}

$\star$ Now, take arbitrary elements $x=\otimes x_i\in \mcL_{(0,1)}$, $y=\otimes y_i\in \mcL_{(1,0)}$, and $z=\otimes z_i\in \mcL_{(1,3)}$, (with $x_1=y_4=z_2=1$) and compute the four projections   of
$J(x,y,z)\in \mcL_{(\bar0,\bar0)}$:
$$ 
\begin{array}{l}
\textrm{pr}_1(J(x,y,z))= -\big(b_1^{'(1)}a'_{10}+a'_{13}b'_0\big)\phi(x_2\wedge y_2)\phi(x_3\wedge y_3\wedge z_3)\phi(x_4\wedge z_4)[y_1,z_1]_1,\\
\textrm{pr}_2(J(x,y,z))=\big(a'_{13}b'_0+a''_{03}b_1\big)\phi(y_1\wedge z_1) \phi(x_3\wedge y_3\wedge z_3)\phi(x_4\wedge z_4)[x_2,y_2]_2,\\
\textrm{pr}_3(J(x,y,z))=\phi(y_1\wedge z_1)\phi(x_2\wedge y_2)\phi(x_4\wedge z_4)
\Big( (a''_{03}b_1- a'_{10}b_1^{'(3)} ) [x_3,y_3\wedge z_3]_3 \\
\qquad\qquad\qquad\qquad\qquad\qquad\qquad\qquad\qquad\quad\qquad+
( a'_{10}b_1^{'(3)}+a'_{13}b'_0 )[y_3,x_3\wedge z_3]_3\Big),\\
\textrm{pr}_4(J(x,y,z))=\big(a''_{03}b_1-a'_{10}b_1^{'(4)}\big)\phi(y_1\wedge z_1)\phi(x_2\wedge y_2)\phi(x_3\wedge y_3\wedge z_3) [x_4,z_4]_4.\\
\end{array}
$$
(Here we have used $[x_3\wedge y_3,z_3]=[x_3,y_3\wedge z_3]-[y_3,x_3\wedge z_3]$ for any $x_3,z_3\in W,y_3\in\bigwedge^2W$.)
This gives $a'_{10}b_1^{'(1)}=-a'_{13}b'_0=a''_{03}b_1= a'_{10}b_1^{'(3)}=a'_{10}b_1^{'(4)}$, and hence
\begin{equation}\label{new78}
b_1^{' }:=b_1^{'(1)}=b_1^{'(3)}=b_1^{'(4)} =a''_{03}  a'_{31} ,
\end{equation}
since $a'_{10}b_1^{' }=a''_{03}b_1\stackrel{\eqref{caso2.7_17}}= 
  a''_{03}   a'_{10}a'_{31}.
$  
Now we calculate
$$
J(1 \otimes e_1 \otimes e_1 \otimes e_1,e_2 \otimes 1 \otimes e_{234} \otimes e_2,e_1 \otimes 1 \otimes e_1 \otimes e_{234} )
=(a'_{11}a''_{23}-b'_1) 1 \otimes e_1 \otimes e_{1} \otimes e_{2},
$$
which immediately gives  
\vspace{-2pt} \begin{equation}\label{new79}
   a'_{11}a''_{23}=b'_1.
\end{equation}  

$\star$ Similarly, for any $x=\otimes x_i\in \mcL_{(0,1)}$, $y=\otimes y_i\in \mcL_{(1,1)}$, and $z=\otimes z_i\in \mcL_{(1,2)}$ with $x_1=y_2=z_3=1$, we find the four projections  of  $J(x,y,z)\in \mcL_{(\bar0,\bar0)}$:
$$
\begin{array}{l}
\textrm{pr}_1(J(x,y,z))=\big(a'_{11}b_2^{'(1)} -a'_{12}b'_1 \big)  \phi(x_2\wedge z_2)\phi(x_3\wedge y_3 )\phi(x_4\wedge y_4\wedge z_4)[y_1,z_1]_1,\\
\textrm{pr}_2(J(x,y,z))=\big(a'_{11}b_2^{'(2)} +a''_{12}b_1 \big)  \phi(y_1\wedge z_1) \phi(x_3\wedge y_3 )\phi(x_4\wedge y_4\wedge z_4)[x_2,z_2]_2,\\
\textrm{pr}_3(J(x,y,z))=\big( a''_{12}b_1+ a'_{12}b'_1\big) \phi(y_1\wedge z_1)\phi(x_2\wedge z_2)\phi(x_4\wedge y_4\wedge z_4)[x_3,y_3]_3,
\\
\textrm{pr}_4(J(x,y,z))= \phi(y_1\wedge z_1)\phi(x_2\wedge z_2)\phi(x_3\wedge y_3 ) \Big(\big( a'_{12}b'_1-a'_{11}b_2^{'(4)} \big) [x_4\wedge y_4,z_4]_4\\
\qquad\qquad\qquad\qquad\qquad\qquad\qquad\qquad\qquad\qquad\qquad+\big(a'_{12}b'_1+a''_{12}b_1  \big)  [x_4, y_4\wedge z_4]_4\Big).
\end{array}
$$
This    yields
\begin{equation}\label{casob2}
b_2^{' }:=b_2^{'(1)}=b_2^{'(2)}=b_2^{'(4)}=-a''_{12}a'_{32},          
\end{equation}
taking into account  $a'_{11}b'_2=-a''_{12}b_1\stackrel{\eqref{auxilio}}=-a''_{12}a'_{11}a'_{32}.$

$\star$ Finally,   we need one more equation: 
for $x=1 \otimes e_1 \otimes e_1 \otimes e_1$, $y=e_2 \otimes e_2 \otimes 1 \otimes e_{23} $ and $z= e_1 \otimes 1 \otimes e_{2} \otimes e_{124} $,
 $$
 J(x,y,z)=
(a'_{12}a''_{33}+a_{11}a''_{23})1\otimes 1\otimes e_{12}\otimes e_{12};
 $$
 giving
  \begin{equation}\label{ultima}
        a'_{12}a''_{33}=-a_{11}a''_{23}.
    \end{equation}
    
    At last  we have achieved all the necessary conditions in \eqref{paciencia}, which  follow immediately from  
     \eqref{caso2.7_6}, \eqref{caso2.7_17}, \eqref{caso2.7_1}, \eqref{new73}, \eqref{caso2.7_12}, \eqref{caso2.7_22}, \eqref{new77}, \eqref{new78}, \eqref{new79}, \eqref{casob2}, and  \eqref{ultima}. That is, if $\mcL$ is a Lie algebra, the scalars have to satisfy \eqref{paciencia}.  \smallskip
     
     A very important fact is that, for any arbitrary choice of $a_{11},a_{12},a'_{10},a'_{11},a'_{12},a'_{13},a''_{01}\in\FF^\times$, \eqref{paciencia} forces the remaining 21 scalars to be
    \begin{equation}\label{todas}
     \begin{array}{c}
     b_1=\dfrac{a'_{10}a'_{11}a'_{12}a'_{13}}{a_{11}a_{12}},\quad
     b_2=\dfrac{a'_{10}a'_{11}a'_{12}a'_{13}}{a_{11}^2},\quad
     a_{23}=\dfrac{-a'_{10}a'_{11}a'_{12}a'_{13}}{a_{11}^2a_{12}},\vspace{4pt}\\
     a'_{20}=\dfrac{a'_{10}a'_{11}}{a_{11}},\quad
     a'_{21}=\dfrac{ -a'_{11}a'_{12}}{a_{11}},\quad
     a'_{22}=\dfrac{ a'_{12}a'_{13}}{a_{11}},\quad
      a'_{23}=\dfrac{ -a'_{10}a'_{13}}{a_{11}},\vspace{3pt}\\
      a_{33}=\dfrac{-a'_{10}a'_{11}a'_{12}a'_{13}}{a_{11}a_{12}^2},\quad
      a'_{30}=\dfrac{a'_{10}a'_{11}a'_{12} }{a_{11}a_{12}},\quad
      a'_{31}=\dfrac{a'_{11}a'_{12}a'_{13} }{a_{11}a_{12}},\quad
      a'_{32}=\dfrac{a'_{10}a'_{12}a'_{13} }{a_{11}a_{12}},\vspace{4pt}\\
      a'_{33}=\dfrac{a'_{10}a'_{11}a'_{13} }{a_{11}a_{12}},\quad 
      b'_0=a'_{10}a''_{01},\quad
      a''_{02}=\dfrac{-a_{11}a''_{01}}{a'_{11}},\quad
      a''_{03}=\dfrac{-a_{11}a_{12}a''_{01}}{a'_{11}a'_{12}},\vspace{4pt}\\
      a''_{11}=\dfrac{-a_{11}a''_{01}}{a'_{10}}, \quad
      a''_{12}=\dfrac{a_{11}a_{12}a''_{01}}{a'_{10}a'_{11}},\quad
      b'_1=-a'_{13}a''_{01},\vspace{4pt}\\
      b'_2=\dfrac{-a'_{12}a'_{13}a''_{01}}{a'_{11} }, \quad
      a''_{23}=\dfrac{- a'_{13}a''_{01}}{a'_{11} },\quad
      a''_{33}=\dfrac{a_{11} a'_{13}a''_{01}}{a'_{11}a'_{12} };
     \end{array}
     \end{equation}
     and conversely, this provides a solution  of \eqref{paciencia}, whose set of solutions is therefore   a 7-parametric family.  Making the 7 free parameters equal to 1, and substituting in \eqref{todas}, we just obtain the concrete solution provided in \eqref{dif_unasol}.\smallskip

  \begin{table} 
\centering
    \hspace{-9pt} \begin{tabular}{|ll|}\hline
      $  { (J_{(\bar0,\bar1),(\bar0,\bar1),(\bar0,\bar2)})}$: $a_{11}b_2=a_{12}b_1 $  
     & 
     ${ (J_{(\bar0,\bar1),(\bar0,\bar1),(\bar0,\bar3)})}$:   $ a_{11}a_{23}=-b_1 $
     \\
  ${ (J_{(\bar0,\bar1),(\bar0,\bar1),(\bar1,\bar0)})}$: $a_{11}a'_{20}=a'_{10}a'_{11} $
  &
   ${ (J_{(\bar0,\bar1),(\bar0,\bar1),(\bar1,\bar1)})}$: $a_{11}a'_{21}=-a'_{11}a'_{12} $
   \\
${ (J_{(\bar0,\bar1),(\bar0,\bar1),(\bar1,\bar2)})}$: $
    a_{11}a'_{22}=a'_{12}a'_{13} $
    &
  ${ (J_{(\bar0,\bar1),(\bar0,\bar1),(\bar1,\bar3)})}$: $
    a_{11}a'_{23}=-a'_{10}a'_{13} $
    \\
 ${ (J_{(\bar0,\bar1),(\bar0,\bar2),(\bar0,\bar2)})}$:
     $a_{12} a_{23}=-b_2 $
&
 ${ (J_{(\bar0,\bar1),(\bar0,\bar2),(\bar0,\bar3)})}$: $
    a_{12}a_{33}=a_{11}a_{23}
 =-b_1 
$
\\
 ${ (J_{(\bar0,\bar1),(\bar0,\bar2),(\bar1,\bar0)})}$: $
    a_{12}a'_{30}=a'_{12}a'_{20}=-a'_{10}a'_{21} 
 $
 &
  ${ (J_{(\bar0,\bar1),(\bar0,\bar2),(\bar1,\bar1)})}$: $
    a_{12}a'_{31}=-a'_{13}a'_{21}=a'_{11}a'_{22} 
$ \\
 ${ (J_{(\bar0,\bar1),(\bar0,\bar2),(\bar1,\bar2)})}$: $
    a_{12}a'_{32}=a'_{10}a'_{22}
 =-a'_{12}a'_{23} 
$
&
 ${ (J_{(\bar0,\bar1),(\bar0,\bar2),(\bar1,\bar3)})}$: 
 $
    a_{12}a'_{33}=-a'_{11}a'_{23}=a'_{13}a'_{20} 
$
\\
 ${ (J_{(\bar0,\bar1),(\bar0,\bar3),(\bar0,\bar3)})}$: $b_1=-a_{12}a_{33}$   
 &
  ${ (J_{(\bar0,\bar1),(\bar0,\bar3),(\bar1,\bar0)})}$: 
   $
   b_1= a'_{10}a'_{31}=a'_{13}a'_{30} 
$
\\
 ${ (J_{(\bar0,\bar1),(\bar0,\bar3),(\bar1,\bar1)})}$: 
  $
    b_1=a'_{10}a'_{31}=a'_{11}a'_{32} 
$
 &
  ${ (J_{(\bar0,\bar1),(\bar0,\bar3),(\bar1,\bar2)})}$: $  b_1= a'_{11}a'_{32} =  a'_{12}a'_{33} $ 
 \\
  ${ (J_{(\bar0,\bar1),(\bar0,\bar3),(\bar1,\bar3)})}$: $b_1=a'_{33}a'_{12}=a'_{13}a'_{30}$ 
 &
  ${ (J_{(\bar0,\bar1),(\bar1,\bar0),(\bar1,\bar0)})}$:  $b'_0 =a'_{10}a''_{01} 
$ 
\\
 ${ (J_{(\bar0,\bar1),(\bar1,\bar0),(\bar1,\bar1)})}$: $a'_{10}a''_{11}=-a_{11}a''_{01}=a'_{11}a''_{02}  $
 &
  ${ (J_{(\bar0,\bar1),(\bar1,\bar0),(\bar1,\bar2)})}$: $
    -a'_{10}a''_{12}=a_{12}a''_{02}
=a'_{12}a''_{03} 
$
 \\
  ${ (J_{(\bar0,\bar1),(\bar1,\bar0),(\bar1,\bar3)})}$:  
$
     a'_{10}b'_1=-a'_{13}b'_0=a''_{03}b_1     
$
&
 ${ (J_{(\bar0,\bar1),(\bar1,\bar1),(\bar1,\bar1)})}$: $a'_{11}a''_{12}=-a_{12}a''_{11}$ 
 \\
  ${ (J_{(\bar0,\bar1),(\bar1,\bar1),(\bar1,\bar2)})}$:  $a'_{11}b'_2=a'_{12}b'_1=-a''_{12}b_1$ 
&
 ${ (J_{(\bar0,\bar1),(\bar1,\bar1),(\bar1,\bar3)})}$: $
   b'_1=a'_{11}a''_{23}=-a'_{13}a''_{01}$ 
\\
 ${ (J_{(\bar0,\bar1),(\bar1,\bar2),(\bar1,\bar2)})}$: $b'_2=a'_{12}a''_{23} $
 &
  ${ (J_{(\bar0,\bar1),(\bar1,\bar2),(\bar1,\bar3)})}$:  
$- a'_{12}a''_{33}=a''_{23}a_{11}=a'_{13}a''_{02}$ 
\\
 ${ (J_{(\bar0,\bar1),(\bar1,\bar3),(\bar1,\bar3)})}$:  $a'_{13}a''_{03}=-a''_{33}a_{12}$ 
 &
  ${ (J_{(\bar0,\bar2),(\bar0,\bar2),(\bar0,\bar3)})}$:  $a_{12}a_{23}=-b_2$ 
\\
 ${ (J_{(\bar0,\bar2),(\bar0,\bar2),(\bar1,\bar0)})}$:  
 $  a'_{22}a'_{20}=b_{2} $
&
  ${ (J_{(\bar0,\bar2),(\bar0,\bar2),(\bar1,\bar1)})}$: $a'_{21}a'_{23}=b_2 $
 \\
  ${ (J_{(\bar0,\bar2),(\bar0,\bar2),(\bar1,\bar2)})}$:  $a'_{20}a'_{22}=b_2$ 
&
 ${ (J_{(\bar0,\bar2),(\bar0,\bar2),(\bar1,\bar3)})}$: $a'_{21}a'_{23}=b_2 $
 \\
  ${ (J_{(\bar0,\bar2),(\bar0,\bar3),(\bar0,\bar3)})}$:  $b_1a_{23}=b_2a_{33}$ 
 &
  ${ (J_{(\bar0,\bar2),(\bar0,\bar3),(\bar1,\bar0)})}$:  
 $
 a'_{10}a_{23}= a'_{23} a'_{30}=   -a'_{20}a'_{32}  
$
 \\
  ${ (J_{(\bar0,\bar2),(\bar0,\bar3),(\bar1,\bar1)})}$:   
$
 a_{23}a'_{11}=  -a'_{31}a'_{20}=a'_{21}a'_{33}  
$
&
 ${ (J_{(\bar0,\bar2),(\bar0,\bar3),(\bar1,\bar2)})}$:  $
 a_{23}a'_{12}=a'_{32}a'_{21}=  -a'_{22}a'_{30} $   
\\
 ${ (J_{(\bar0,\bar2),(\bar0,\bar3),(\bar1,\bar3)})}$:  $
  a_{23}a'_{13}= -a'_{33}a'_{22}=a'_{23}a'_{31} $
&
 ${ (J_{(\bar0,\bar2),(\bar1,\bar0),(\bar1,\bar0)})}$:  $
   a'_{20}a''_{02}=-b'_0 
$
 \\
  ${ (J_{(\bar0,\bar2),(\bar1,\bar0),(\bar1,\bar1)})}$: $a'_{20}a''_{12}
  =a_{12}a''_{01}= a'_{21}a''_{03}
   $
&
 ${ (J_{(\bar0,\bar2),(\bar1,\bar0),(\bar1,\bar2)})}$: 
$
   a'_{20}b'_2=a''_{02}b_2=-a'_{22}b'_0 
$  
\\
 ${ (J_{(\bar0,\bar2),(\bar1,\bar0),(\bar1,\bar3)})}$:  $a'_{20}a''_{23}=-a_{23}a''_{03}=a'_{23}a''_{01}$ 
 &
  ${ (J_{(\bar0,\bar2),(\bar1,\bar1),(\bar1,\bar1)})}$:  $a'_{21}b'_1=-a''_{11}b_2$ 
   \\
    ${ (J_{(\bar0,\bar2),(\bar1,\bar1),(\bar1,\bar2)})}$:  $a'_{21}a''_{23}=-a_{23}a''_{12}=a'_{22}a''_{01}$ \    
   &
    ${ (J_{(\bar0,\bar2),(\bar1,\bar1),(\bar1,\bar3)})}$: $a'_{21}a''_{33}=b'_1=-a'_{23}a''_{11}$ 
\\
 ${ (J_{(\bar0,\bar2),(\bar1,\bar2),(\bar1,\bar2)})}$: $a'_{22}a''_{02}=b'_2$ 
   &
    ${ (J_{(\bar0,\bar2),(\bar1,\bar2),(\bar1,\bar3)})}$: $a'_{22}a''_{03}=a_{12}a''_{23}=a'_{23}a''_{12}$ 
        \\
         ${ (J_{(\bar0,\bar2),(\bar1,\bar3),(\bar1,\bar3)})}$: $a'_{23}b'_1=a''_{33}b_2$ 
         &
            ${ (J_{(\bar0,\bar3),(\bar0,\bar3),(\bar1,\bar0)})}$: $a'_{20}a_{33}=-a'_{30}a'_{33}$ 
          \\
           ${ (J_{(\bar0,\bar3),(\bar0,\bar3),(\bar1,\bar1)})}$: $a'_{21}a_{33}=a'_{30}a'_{31}$ 
           &
            ${ (J_{(\bar0,\bar3),(\bar0,\bar3),(\bar1,\bar2)})}$: $a'_{22}a_{33}=-a'_{31}a'_{32}$ 
            \\
             ${ (J_{(\bar0,\bar3),(\bar0,\bar3),(\bar1,\bar3)})}$: $a'_{23}a_{33}=a'_{32}a'_{33}$ 
             &
              ${ (J_{(\bar0,\bar3),(\bar1,\bar0),(\bar1,\bar0)})}$: $a'_{30}a''_{03}=-b'_0$ 
              \\
               ${ (J_{(\bar0,\bar3),(\bar1,\bar0),(\bar1,\bar1)})}$:  $-a'_{30}b'_1=a''_{01}b_1=a'_{31}b'_0$ 
               &
                ${ (J_{(\bar0,\bar3),(\bar1,\bar0),(\bar1,\bar2)})}$:  $a'_{30}a''_{23}=a_{23}a''_{02}=-a'_{32}a''_{01}$ \ 
\\
 ${ (J_{(\bar0,\bar3),(\bar1,\bar0),(\bar1,\bar3)})}$:  $a'_{30}a''_{33}=a_{33}a''_{03}=-a'_{33}a''_{02}$ 
 &
  ${ (J_{(\bar0,\bar3),(\bar1,\bar1),(\bar1,\bar1)})}$: $a'_{31}a''_{01}=a_{23}a''_{11}$ 
  \\
   ${ (J_{(\bar0,\bar3),(\bar1,\bar1),(\bar1,\bar2)})}$:  $a'_{31}a''_{02}=a_{33}a''_{12}=a'_{32}a''_{11}$ 
 &
  ${ (J_{(\bar0,\bar3),(\bar1,\bar1),(\bar1,\bar3)})}$:  $a'_{31}a''_{03}=b'_1=-a'_{33}a''_{12}$ 
 \\
  ${ (J_{(\bar0,\bar3),(\bar1,\bar2),(\bar1,\bar2)})}$:  $a'_{32}a''_{12}=-b'_2$ 
 &
  ${ (J_{(\bar0,\bar3),(\bar1,\bar2),(\bar1,\bar3)})}$: $a'_{32}b'_1=a''_{23}b_1=a'_{33}b'_2$ 
  \\
   ${ (J_{(\bar0,\bar3),(\bar1,\bar3),(\bar1,\bar3)})}$: $a'_{33}a''_{23}=a_{23}a''_{33}$ 
   &
    ${ (J_{(\bar1,\bar0),(\bar1,\bar0),(\bar1,\bar1)})}$:  $b'_0=a''_{01}a'_{10}$ 
 \\
  ${ (J_{(\bar1,\bar0),(\bar1,\bar0),(\bar1,\bar2)})}$: $b'_0=-a'_{20}a''_{02}$ 
&
 ${ (J_{(\bar1,\bar0),(\bar1,\bar0),(\bar1,\bar3)})}$: $b'_0=-a'_{30}a''_{03}$ 
\\
 ${ (J_{(\bar1,\bar0),(\bar1,\bar1),(\bar1,\bar1)})}$:  $a'_{11}a''_{01}=-a'_{20}a''_{11}$ 
  &
   ${ (J_{(\bar1,\bar0),(\bar1,\bar1),(\bar1,\bar2)})}$: $a'_{12}a''_{01}=a'_{30}a''_{12}=a'_{21}a''_{02}$ 
 \\
  ${ (J_{(\bar1,\bar0),(\bar1,\bar1),(\bar1,\bar3)})}$: $-a'_{13}a''_{01}=b'_1=a'_{31}a''_{03}$ 
&
 ${ (J_{(\bar1,\bar0),(\bar1,\bar2),(\bar1,\bar2)})}$: $a'_{22}a''_{02}=b'_2$ 
\\
 ${ (J_{(\bar1,\bar0),(\bar1,\bar2),(\bar1,\bar3)})}$: $-a'_{23}a''_{02}=a'_{10}a''_{23}=a'_{32}a''_{03}$ 
 &
  ${ (J_{(\bar1,\bar0),(\bar1,\bar3),(\bar1,\bar3)})}$: $a'_{33}a''_{03}=-a'_{20}a''_{33}$ 
  \\
   ${ (J_{(\bar1,\bar1),(\bar1,\bar1),(\bar1,\bar2)})}$: $a'_{22}a''_{11}=-a'_{31}a''_{12}$ 
 &
  ${ (J_{(\bar1,\bar1),(\bar1,\bar1),(\bar1,\bar3)})}$: $a'_{23}a''_{11}=-b'_1$ 
  \\
   ${ (J_{(\bar1,\bar1),(\bar1,\bar2),(\bar1,\bar2)})}$: $a'_{32}a''_{12}=-b'_2$
   &
    ${ (J_{(\bar1,\bar1),(\bar1,\bar2),(\bar1,\bar3)})}$: $-a'_{33}a''_{12}=a'_{11}a''_{23}=b'_1$ 
    \\
     ${ (J_{(\bar1,\bar1),(\bar1,\bar3),(\bar1,\bar3)})}$: $b'_1=a'_{21}a''_{33}$ 
   &
    ${ (J_{(\bar1,\bar2),(\bar1,\bar2),(\bar1,\bar3)})}$: $b'_2=a'_{12}a''_{23}$ 
 \\
  ${ (J_{(\bar1,\bar2),(\bar1,\bar3),(\bar1,\bar3)})}$: $a'_{13}a''_{23}=-a'_{22}a''_{33}$& \vspace{2pt}\\
  \hline
     \end{tabular}
     \vspace{3pt}
      \caption{Equivalent conditions in the $\mathbb Z_2\times\mathbb Z_4$-model.}
\label{tabla2}
      \end{table}

    Conversely, we assume that the scalars $a_{\alpha,\beta},b_\alpha^{(i)}\in\mathbb F^\times$ satisfy all the equations in \eqref{paciencia} (and hence they satisfy \eqref{todas} too) and let us prove that then $\mcL$ is a Lie algebra. First, it is not difficult to check case by case that $[x,y]=-[y,x]$ if $x,y\in\mcL_{\alpha}$ for $\alpha\in\ZZ_2\times\ZZ_4$, as in the above four models, 
    so that the skew-symmetric extension is well defined. Alternatively, those computations can be skipped if we recall the existence of a solution making $\mcL$ a Lie algebra, since at least there must be a description of the exceptional Lie algebra $\e_8$ with the products as in \eqref{eq_Z4Z2}.    
    That is, we once again follow    the lines of Remark~\ref{re_haysol}. 
Besides, we only have to check Jacobi identities $J({\mcL_\alpha,\mcL_\beta,\mcL_\gamma})=0$ for all $\alpha\prec \beta\prec \gamma$, denoted again by $(J_{\alpha,\beta,\gamma})$. The strategy is much the same as for the model in Section~\ref{seZ6}:  it is enough to check that the equations in \eqref{paciencia} are equivalent to the     list of identities provided in Table~\ref{tabla2}: one/two for each of the possibilities
     $(J_{\alpha,\beta,\gamma})$  for $\alpha\prec \beta\prec \gamma$.

     All these identities are easily derived from \eqref{todas}, by direct substitution. Although this is fairly  straightforward, it is   tedious too: there are 77 cases to have into account (112 equations, most of them redundant, since they are eventually equivalent to the 21 equations  displayed in \eqref{paciencia}). It is therefore advisable to be aware   that not all these checks are   strictly necessary, considering   that  having a relationship between $a_{\alpha,\beta}a_{\alpha+\beta,\gamma}$ and $a_{ \beta,\gamma}a_{\beta+\gamma,\alpha}$  for any $\alpha,\beta,\gamma\in\ZZ_2\times\ZZ_4$ is sufficient  (it is not important if they are equal, opposite, double...), 
     again due to our prior knowledge of the existence of a solution. \smallskip

 The strategy for the uniqueness developed in Remark~\ref{re_simplicidad} is viable here. Alternatively, a suitable isomorphism between $\mcL$ and $\mcL'$, which is a scalar multiple of the identity in each homogeneous component, can be  explicitly given,   for  $\mcL$ and $\mcL'$ graded Lie algebras defined by \eqref{ecumodel_Z24}, \eqref{eq_Z4Z2} and \eqref{paciencia}, for different choices of the 28 variables (in fact, of the 7 free parameters). We think that  adding here the concrete isomorphism would not provide valuable new information. 
\end{proof} \smallskip


 \textbf{Acknowledgment:}  
 Cristina Draper thanks C\'andido Mart\'\i n for the years spent trying to understand Adams' book, which   seem to begin to be fruitful. Also the authors would like  to express him their gratitude   for his   model on $\e_8$ based on the 4 copies of $\slf(3)$, although this pretty version has not been enclosed here (for unification reasons).


\end{document}